\newcommand\myshade{85}
\colorlet{mylinkcolor}{violet}
\colorlet{mycitecolor}{blue}
\colorlet{myurlcolor}{green}
\newcommand{\ssec}[0]{\subsection}
\newcommand{\sssec}[0]{\subsubsection}
\newcommand{\on}{\operatorname}
\newcommand{\nc}{\newcommand}
\nc{\TV}{To\"en--Vezzosi}
\nc{\et}{{\on{\acute{e}t}}}
\nc{\virg}[1]{``#1"}
\nc{\bigt}[1]{\bigl( #1 \bigr) }
\nc{\Bigt}[1]{\Bigl( #1 \Bigr) }
\nc{\wt}{\tilde}
\nc{\ue}{\textup{e}}
\nc{\uh}{\textup{h}}
\renewcommand{\tilde}{\widetilde}
\renewcommand{\hat}{\widehat}
\theoremstyle{plain}
\newtheorem{thm}[subsubsection]{Theorem}
\newtheorem{mainthm}{Theorem}
\newtheorem{lem}[subsubsection]{Lemma}
\newtheorem{prop}[subsubsection]{Proposition}
\newtheorem{cor}[subsubsection]{Corollary}
\theoremstyle{definition}
\newtheorem{quest}[subsubsection]{Question}
\newtheorem{notat}[subsubsection]{Notation}
\theoremstyle{remark}
\newtheorem{rmk}[subsubsection]{Remark}
\newtheorem{rem}[subsubsection]{Remark}
\newcommand{\thmref}[1]{Theorem~\ref{#1}}
\newcommand{\lemref}[1]{Lemma~\ref{#1}}
\newcommand{\propref}[1]{Proposition~\ref{#1}}
\newcommand{\corref}[1]{Corollary~\ref{#1}}
\newcommand{\remref}[1]{Remark~\ref{#1}}
\newcommand{\questref}[1]{Question~\ref{#1}}
\numberwithin{equation}{section}
\theoremstyle{plain}
\newtheorem*{unthm}{Theorem}
\theoremstyle{definition}
\newcommand{\ccC}[0]{\mathcal{C}}
\newcommand{\ccF}[0]{\mathcal{F}}
\newcommand{\ccG}[0]{\mathcal{G}}
\newcommand{\ccH}[0]{\mathcal{H}}
\newcommand{\ccL}[0]{\mathcal{L}}
\newcommand{\ccM}[0]{\mathcal{M}}
\newcommand{\ccO}[0]{\mathcal{O}}
\newcommand{\ccR}[0]{\mathcal{R}}
\newcommand{\ccS}[0]{\mathcal{S}}
\newcommand{\bbA}[0]{\mathbb{A}}
\newcommand{\bbC}[0]{\mathbb{C}}
\newcommand{\bbD}[0]{\mathbb{D}}
\newcommand{\bbE}[0]{\mathbb{E}}
\newcommand{\bbF}[0]{\mathbb{F}}
\newcommand{\bbP}[0]{\mathbb{P}}
\newcommand{\bbQ}[0]{\mathbb{Q}}
\newcommand{\bbU}[0]{\mathbb{U}}
\newcommand{\bbZ}[0]{\mathbb{Z}}
\newcommand{\ffF}[0]{\mathfrak{F}}
\newcommand{\ffQ}[0]{\mathfrak{Q}}
\newcommand{\ffS}[0]{\mathfrak{S}}
\nc{\ffa}{{\mathfrak{a}}}
\nc{\ffb}{{\mathfrak{b}}}
\nc{\ffc}{{\mathfrak{c}}}
\nc{\ffd}{{\mathfrak{d}}}
\nc{\ffe}{{\mathfrak{e}}}
\nc{\fff}{{\mathfrak{f}}}
\nc{\ffg}{{\mathfrak{g}}}
\nc{\ffgl}{{\mathfrak{gl}}}
\nc{\ffh}{{\mathfrak{h}}}
\nc{\ffii}{{\mathfrak{i}}}
\nc{\ffj}{{\mathfrak{j}}}
\nc{\ffk}{{\mathfrak{k}}}
\nc{\ffl}{{\mathfrak{l}}}
\nc{\ffm}{{\mathfrak{m}}}
\nc{\ffn}{{\mathfrak{n}}}
\nc{\ffo}{{\mathfrak{o}}}
\nc{\ffp}{{\mathfrak{p}}}
\nc{\ffq}{{\mathfrak{q}}}
\nc{\ffr}{{\mathfrak{r}}}
\nc{\ffs}{{\mathfrak{s}}}
\nc{\fft}{{\mathfrak{t}}}
\nc{\ffu}{{\mathfrak{u}}}
\nc{\ffv}{{\mathfrak{v}}}
\nc{\ffx}{{\mathfrak{x}}}
\nc{\ffy}{{\mathfrak{y}}}
\nc{\ffw}{{\mathfrak{w}}}
\nc{\ffz}{{\mathfrak{z}}}
\newcommand{\rmB}[0]{\mathrm{B}}
\newcommand{\scrE}[0]{\mathscr{E}}
\newcommand{\scrG}[0]{\mathscr{G}}
\newcommand{\scrO}[0]{\mathscr{O}}
\nc{\sA}{{\mathsf{A}}}
\nc{\sB}{{\mathsf{B}}}
\nc{\sC}{{\mathsf{C}}}
\nc{\sD}{{\mathsf{D}}}
\nc{\sE}{{\mathsf{E}}}
\nc{\sF}{{\mathsf{F}}}
\nc{\sG}{{\mathsf{G}}}
\nc{\sK}{{\mathsf{K}}}
\nc{\sM}{{\mathsf{M}}}
\nc{\sN}{{\mathsf{N}}}
\nc{\sO}{{\mathsf{O}}}
\nc{\sW}{{\mathsf{W}}}
\nc{\sQ}{{\mathsf{Q}}}
\nc{\sP}{{\mathsf{P}}}
\nc{\sR}{{\mathsf{R}}}
\nc{\sS}{{\mathsf{S}}}
\nc{\sT}{{\mathsf{T}}}
\nc{\sU}{{\mathsf{U}}}
\nc{\sV}{{\mathsf{V}}}
\nc{\sZ}{{\mathsf{Z}}}
\newcommand{\up}[1]{\on{#1}}
\newcommand{\ul}[1]{\underline{#1}}
\newcommand{\ol}[1]{\overline{#1}}
\renewcommand{\epsilon}{\varepsilon}
\newcommand{\op}[0]{{\on{op}}}
\nc{\cat}{\on{cat}} 
\newcommand{\oo}[0]{\infty}
\newcommand{\uG}[0]{\on{G}}
\newcommand{\uH}[0]{\on H}
\newcommand{\uI}[0]{\on{I}}
\newcommand{\uK}[0]{\on{K}}
\newcommand{\uL}[0]{\on{L}}
\newcommand{\uM}[0]{\on{M}}
\newcommand{\HL}[0]{\GLK}
\newcommand{\GLK}[0]{\on{G}_{\on{L/K}}}
\newcommand{\uKs}[0]{\overline{\on{K}}}
\newcommand{\coFib}[0]{\on{coFib}}
\newcommand{\Ker}[0]{\on{Ker}}
\nc{\act}{{\on{act}}}
\nc{\rev}{{\on{rev}}}
\nc{\env}{{\on{env}}}
\newcommand{\xto}[1]{\xrightarrow{#1}}
\nc{\hto}{\hookrightarrow}
\nc{\squigto}{\rightsquigarrow}
\nc{\longto}{\longrightarrow}
\nc{\xlongto}{\xlongrightarrow}
\nc{\tto}{\twoheadrightarrow}
\nc{\leftto}{\leftarrow}
\nc{\lto}{\leftto}
\nc{\longlto}{\longleftarrow}
\nc{\xlto}[1]{\xleftarrow{#1}}
\nc{\xlonglto}[1]{\xlongleftarrow{#1}}
\DeclareMathOperator{\id}{id} 
\newcommand{\Gal}[0]{{\scrG}al}
\newcommand{\IK}{{\uI_{\uK}}}
\newcommand{\IL}{{\uI_{\uL}}}
\nc{\kk}{\mathbbm{k}} 
\nc{\uk}{{\textup{k}}}
\newcommand{\Ql}[1]{\bbQ_{\ell #1}}
\newcommand{\BU}[0]{\rmB \bbU}
\nc{\sBenv}{\sB^{\env}}
\nc{\sCenv}{\sC^{\env}}
\nc{\Benv}{\CB^{\env}}
\nc{\Sym}{\on{Sym}}
\nc{\one}{{\mathbf{1}}}
\newcommand{\Spec}[0]{\on{Spec}}
\newcommand{\Sch}[0]{\on{Sch}}
\nc{\bareta}{\bar{\eta}}
\DeclareMathOperator{\Sm}{Sm}
\DeclareMathOperator{\Hom}{Hom} 
\newcommand{\pr}[0]{\on{pr}}
\nc{\Ind}[0]{\on{Ind}}
\newcommand{\HK}[0]{\on{HK}}
\nc{\LG}{\textup{LG}} 
\newcommand{\dgCat}{\on{dgCat}}
\nc{\dgCats}{\on{dgCat}^{\up s}} 
\newcommand{\Mod}[0]{\on{Mod}}
\newcommand{\Fun}[0]{\on{Fun}}
\nc{\Sp}{\on{Sp}}
\nc{\shv}{\textup{Shv}} 
\newcommand{\cohs}[0]{\on{Coh}^{\on{s}}}
\DeclareMathOperator{\Coh}{\mathsf{D^b_{coh}}} 
\DeclareMathOperator{\Perf}{\mathsf{D_{perf}}} 
\DeclareMathOperator{\QCoh}{\mathsf{D_{qcoh}}} 
\DeclareMathOperator{\Sing}{\mathsf{D_{sg}}} 
\DeclareMathOperator{\MF}{\on{MF}}
\newcommand{\SH}[0]{\ccS \ccH}
\newcommand{\SHnc}[0]{\ccS \ccH^{\on{nc}}}
\newcommand{\Shv}[0]{\on{Shv}_{\Ql{}}}
\newcommand{\Mv}[0]{\ccM^{\vee}}
\newcommand{\rl}[0]{\on{r}^{\ell}}
\nc{\hB}{\on{h}_{\sB}}
\nc{\hC}{\on{h}_{\sC}}
\nc{\restr}[2]{\left. #1 \right |_{#2}}
\nc{\wideprime}[1]{#1'}
\renewcommand{\setminus}{\smallsetminus}
\renewcommand{\sim}{\simeq}
\begin{document}

\title{Non-commutative nature of \texorpdfstring{$\ell$}{l}-adic vanishing cycles}
\address[D.~Beraldo]{Department of Mathematics, University College London, London WC1H 0AY, United Kingdom}

\email{d.beraldo@ucl.ac.uk}

\address[M.~ Pippi]{CNRS, Univ Angers, CNRS-UMR 6093, LAREMA, SFR MATHSTIC, F-49000 Angers, France}

\email{massimo.pippi@univ-angers.fr}

\author{Dario Beraldo \and Massimo Pippi}

\begin{abstract}
Let $p:X \to S$ be a flat (proper) and regular scheme of finite type over a strictly henselian discrete valuation ring. We prove that the singularity category of the special fiber with its natural two-periodic structure allows to recover the $\ell$-adic vanishing cohomology of $p$.

Along the way, we compute homotopy-invariant non-connective algebraic K-theory with compact support of certain embeddings $X_t\hto X_T$ in terms of the motivic realization of the dg-category of relatively perfect complexes.
\end{abstract}

\maketitle

\tableofcontents

\section{Introduction}\label{introduction}
\ssec{Posing the problem}

\sssec{}

It is well-known, and well documented in the existing literature, that differential graded (dg) categories of singularities are intimately related to vanishing cohomology. For instance, see \cite{dyc11,ef18,pr11,se13,brtv18}. 

\sssec{}

In particular, let $W$ be a complex smooth quasi-projective variety and $f: W \to \bbA^{1}_{\bbC}$ a regular map.
In \cite{ef18}, it is proven that the vanishing cohomology of $f$ \emph{together with its monodromy action} can be recovered as the periodic cyclic homology of the singularity category of $f^{-1}(0)$, with the extra datum given by a \emph{Getzler--Gauss--Manin connection}. 
The latter was introduced in \cite{kkp08} and written down explicitly in \cite{shk14}. 

\sssec{}

In this paper we deal with the $\ell$-adic analogue of the above phenomenon, where the extra datum of the Getzler--Gauss--Manin connection is replaced by a natural (left) module structure on the dg-category of singularities of the special fiber. 

\sssec{}

Let $S=\Spec(\scrO_{\uK})$ be the spectrum of an excellent strictly henselian discrete valuation ring, with closed point $i_S:s\to S$ and inertia group $\IK$. 
We assume that the residue field is perfect and fix a prime number $\ell$ different from the residue characteristic of $\scrO_{\uK}$.
Let $p:X\to S$ be a proper, flat and regular $S$-scheme.
The main result of \cite{brtv18} shows that it is possible to recover the homotopy $\IK$-fixed points of the $\ell$-adic vanishing cohomology of $p$ by means of derived and non-commutative algebraic geometry as follows.

\sssec{}

In \cite{brtv18}, Blanc--Robalo--To\"en--Vezzosi construct the \emph{$\ell$-adic realization of dg-categories} functor 
$$
 \rl_S:\dgCat_S \to \Mod_{\Ql{,S}(\beta)}\bigt{\Shv(S)},
$$
where the right hand side is the $\oo$-category of modules over $\Ql{,S}(\beta)=\bigoplus_{j\in \bbZ}\Ql{,S}(j)[2j]$ in the $\oo$-category of $\ell$-adic sheaves on $S$.
More details will be provided in Section \ref{ssec: motivic and l-adic realizations of dg-categories}.

\sssec{}

Consider the quotient dg-category
$$
  \Sing(X_s):=\frac{\Coh(X_s)}{\Perf(X_s)}.
$$
This is called the dg-category of singularities of the special fiber and it is naturally a module over the convolution monoidal dg-category $\Sing(G)$, where $G=s\times_S s$ is the derived self-intersection of the special point.

\sssec{}

The main theorem of \cite{brtv18} states that there is an equivalence
$$
  \rl_S\bigt{\Sing(X_s)} \simeq i_{S*}\uH^*_{\et} \Bigt{X_s,\Phi_p\bigt{\Ql{,X}(\beta)}}^{\IK}[-1],
$$
where $\Phi_p\bigt{\Ql{,X}(\beta)}$ denotes the $\ell$-adic sheaf of vanishing cyles of $p$ with $\Ql{,X}(\beta)=p^*\Ql{,S}(\beta)$ coefficients.
Moreover, this equivalence respects the natural actions of the algebra 
$$
\rl_S\bigt{\Sing(G)}\simeq i_{S*}\Ql{,s}(\beta)^{\IK}
$$
on both sides.

\medskip

It is natural to ask the following

\begin{quest}\label{main question}
Is it possible to recover the vanishing cohomology $\uH^*_{\et} \Bigt{X_s,\Phi_p\bigt{\Ql{,X}(\beta)}}$, with its natural continuous $\IK$-action, as the $\ell$-adic realization of a dg-category?
\end{quest}

\ssec{Our main results}

The goal of this paper is to provide an affirmative answer to the question above.

\sssec{}
Let $T=\Spec(\scrO_{\uL})\to S$ be a (necessarily totally ramified) extension of excellent strictly henselian discrete valuation rings. 
Let $\IL$ denote the absolute Galois group of the generic point $\Spec(\uL)$ of $T$. 
Let $i_{X_T}:X_t\hto X_T$ be the pullback of the closed immersion $X_s\hto X$ along $T\to S$. 
This morphism, being closed and quasi-smooth, induces a dg-functor 
$$
  i_{X_T*}:\Coh(X_t)\to \Coh(X_T)
$$ 
which preserves perfect complexes. In particular, it induces a dg-functor 
$$
  i_{X_T*}:\frac{\Coh(X_t)}{\Perf(X_t)}=\Sing(X_t)\to \Sing(X_T)=\frac{\Coh(X_T)}{\Perf(X_T)}
$$ 
at the level of the singularity dg-categories. 
Let us consider the dg-category of relative singularities of $X_t\hto X_T$ (\cite{bw12,ep15}),
$$
 \Sing (X_t\xto{i_{X_T}} X_T):= \Ker\bigt{i_{X_T*}:\Sing(X_t)\to \Sing(X_T) }.
$$
Our main theorem reads as follows.

\begin{mainthm}\label{main thm A}
Let $\HL$ denote the (finite) quotient of $\IK$ by $\IL$.
There is an equivalence
$$
  \rl_S\bigt{\Sing(X_t\xto{i_{X_T}} X_T)}\simeq i_{S*}\uH^*_{\et} \Bigt{X_s,\Phi_p\bigt{\Ql{,X}(\beta)}}^{\IL}[-1]
$$
of $i_{S*}\Ql{,s}^{\IL}(\beta)$-modules, compatible with the natural $\HL$-actions.
\end{mainthm}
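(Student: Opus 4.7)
The plan is to apply the $\ell$-adic realization $\rl_S$ to the Verdier sequence of dg categories
$$\Sing(X_t \xto{i_{X_T}} X_T) \longto \Sing(X_t) \xto{i_{X_T*}} \Sing(X_T)$$
coming from the definition $\Sing(X_t \to X_T) := \Ker(i_{X_T*})$. After verifying that this is genuinely a Verdier (equivalently, cofiber) sequence of dg categories, so that the localizing invariant $\rl_S$ sends it to a fiber sequence of $\ell$-adic sheaves on $S$, the theorem reduces to identifying the three resulting terms.

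The middle term is controlled by the main theorem of \cite{brtv18}: since $T/S$ is totally ramified with perfect residue field, one has $X_t = X_s$ as schemes, and hence $\Sing(X_t) = \Sing(X_s)$ as dg categories. Writing $V := \bbH(X_s, \Phi_p(\Ql{,X}(\beta)))$, this gives
$$\rl_S(\Sing(X_t)) \simeq i_{S*} V^{\uI_\uK}[-1].$$

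The crux is the computation of the right-hand term $\rl_S(\Sing(X_T))$, which is the \emph{new} input: the base change $X_T$ fails to be regular once the extension $T/S$ is ramified, so the arguments of \cite{brtv18} do not apply to $X_T$ directly. This is where the auxiliary result advertised in the abstract enters. One computes the homotopy-invariant non-connective algebraic K-theory with compact support of the embedding $X_t \hto X_T$ in terms of the motivic realization of the dg category of relatively perfect complexes $\Perf^{\rel}(X_T/T)$. Because $X$ is regular and $T \to S$ is a DVR extension, $X_T$ is Gorenstein; this Gorenstein property controls the relationship between $\Perf^{\rel}(X_T/T)$ and $\Coh(X_T)$ and thereby gives access to $\Sing(X_T) = \Coh(X_T)/\Perf(X_T)$ via the motivic formalism, producing a computation of $\rl_S(\Sing(X_T))$ as the cofiber of a natural trace/norm-type map from $V^{\uI_\uL}[-1]$ to $V^{\uI_\uK}[-1]$.

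Finally, one matches the $\ell$-adic fiber sequence obtained above with the corresponding Galois-theoretic fiber sequence for the finite group $\uH_\uL = \uI_\uK/\uI_\uL$. Proper base change for $\ell$-adic vanishing cycles along $T \to S$ identifies $\Phi_{p_T}(\Ql{,X_T}(\beta))$ with $\Phi_p(\Ql{,X}(\beta))|_{X_t}$ as $\ell$-adic sheaves, the $\uI_\uL$-action on the former being the restriction of the $\uI_\uK$-action on the latter. Comparing fibers yields the desired equivalence $\rl_S(\Sing(X_t \to X_T)) \simeq i_{S*} V^{\uI_\uL}[-1]$. The $\uH_\uL$-equivariance and the module structure over $i_{S*}\Ql{,s}(\beta)^{\uI_\uL}$ are obtained by running the same analysis for the base case $\Sing(t \to T)$, which plays the role that $\Sing(G)$ plays in \cite{brtv18}, and tracking Galois naturality throughout. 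The principal obstacle lies precisely in the treatment of the non-regular scheme $X_T$; this is the content of the K-theory with compact support / relatively perfect complexes computation that forms the technical heart of the paper.
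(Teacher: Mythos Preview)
Your proposal contains a genuine error that breaks the argument. You assert that ``since $T/S$ is totally ramified with perfect residue field, one has $X_t = X_s$ as schemes, and hence $\Sing(X_t) = \Sing(X_s)$.'' This is false: by definition $t = s \times_S T = \Spec(\scrO_\uL/(\pi_\uK)) = \Spec(\scrO_\uL/(\pi_\uL^\ue))$, which is a \emph{non-reduced} nil-thickening of $s$ whenever $\ue > 1$. Consequently $X_t$ is a nil-thickening of $X_s$, the dg category $\Sing(X_t)$ is strictly larger than $\Sing(X_s)$, and the main theorem of \cite{brtv18} does \emph{not} compute $\rl_S(\Sing(X_t))$. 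A second, independent gap is that you treat the assertion ``$\Sing(X_t \to X_T) \to \Sing(X_t) \to \Sing(X_T)$ is a Verdier sequence'' as a routine verification. It is not: the kernel is $\Sing(X_t \to X_T)$ by definition, but showing that the induced functor from the \emph{quotient} $\Sing(X_t)/\Sing(X_t \to X_T)$ to $\Sing(X_T)$ is an equivalence is the main technical content of the paper. It is first proved for the base case $\Coh(G_t \to t) \hto \Coh(G_t) \to \Sing(t)$ via explicit strict dg models and matrix factorizations, and then transferred to $X_t \hto X_T$ by tensoring with $\Coh(X_s)^{\op}$ over the monoidal dg category $\CB^+ = \Coh(G)$.

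The paper's route to the theorem is also structurally different from yours. It never computes $\rl_S(\Sing(X_t))$ or $\rl_S(\Sing(X_T))$ individually. Instead it uses the localization sequence $\Perf(X_t) \to \Coh(X_t \to X_T) \to \Sing(X_t \to X_T)$: the middle term is identified via the d\'evissage-type result $\Mv_S(\Coh(X_t \to X_T)) \simeq \Mv_S(\Perf(X_T)_{X_t}) \simeq q_{T*}i_{X_T*}i_{X_T}^!\BU_{X_T}$ (itself a consequence of the localization sequence above together with G-theory d\'evissage), and then a diagram chase with an octahedron yields a fiber sequence exhibiting $\rl_S(\Sing(X_t \to X_T))$ directly as the fiber of the $\uI_\uL$-fixed specialization map $i_{S*}p_{s*}\Ql{,X_s}(\beta)^{\uI_\uL} \to i_{S*}p_{s*}\Psi_p(\Ql{,X_\uK}(\beta))^{\uI_\uL}$, i.e.\ as $i_{S*}\bbH(X_s,\Phi_p(\Ql{,X}(\beta)))^{\uI_\uL}[-1]$.
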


\sssec{}

Given this result, it is then easy to answer to \questref{main question} as follows.

Let $\scrE$  be the filtered category of finite extensions of discrete valuation rings $T\to S$ as above. 
For two extensions $U\to T \to S$ as above, the pullback along $X_u\to X_t$ induces a dg-functor
$$
  \Sing(X_t\xto{i_{X_T}} X_T)\to \Sing(X_u\xto{i_{X_U}} X_U).
$$
This construction induces a diagram of dg-categories indexed by $\scrE$.
The actions of the finite quotients $\IK/\IL$ are compatible with this diagram and induce a continuous action of $\IK$ on the colimit
$$
  \ffS:= \varinjlim_{T\in \scrE}\Sing(X_t\xto{i_{X_T}} X_T).
$$

\begin{mainthm}\label{main thm B}
There is an equivalence
$$
  \rl_S(\ffS)\simeq i_{S*}\uH^*_{\et} \Bigt{X_s,\Phi_p\bigt{\Ql{,X}(\beta)}}[-1]
$$
of $i_{S*}\Ql{,s}(\beta)$-modules, compatible with the natural (continuous) $\IK$-actions.
\end{mainthm}

\begin{rmk}
    In Theorems \ref{main thm A} and \ref{main thm B}, it is not really necessary to assume that $p:X\to S$ is proper.
    In Appendix \ref{sec:prop hyp}, we explain how to remove this hypothesis.
\end{rmk}

\ssec{Strategy of the proof of \thmref{main thm A}}

\sssec{}

Observe that there is an equivalence of dg-categories
$$
  \Sing(X_t\xto{i_{X_T}} X_T)\simeq \frac{\Coh(X_t\xto{i_{X_T}} X_T)}{\Perf(X_t)},
$$
where $\Coh(X_t\xto{i_{X_T}} X_T)\subset \Coh(X_t)$ denotes the full subcategory spanned by objects $E \in \Coh(X_t)$ such that $i_{X_T*}(E)\in \Perf(X_T)$.
This is the dg-category of relatively perfect complexes we alluded to in the abstract.
In order to prove \thmref{main thm A}, one needs to compute the motivic realization of $\Coh(X_t\xto{i_{X_T}} X_T)$.

\sssec{}

We now notice that we have a localization sequence
$$
  \Coh(G_t\xto{a_1} t)\hto \Coh(G_t) \to \Sing(t)
$$
of dg-categories. Moreover, this is a localization sequence of left $\Coh(G)$-modules. 
Here, $a_1:G_t\to t$ is the pullback of $i_S:s\to S$ along $t \to S$ and the dg-functor $\Coh(G_t) \to \Sing(t)$ is induced by the pushforward along $a_1$ (notice that $a_1$ is proper and quasi-smooth).

\sssec{}

Since the dg-category $\Coh(X_s)^{\op}$ admits a right $\Coh(G)$-module structure, we can then apply the functor $\Coh(X_s)^{\op}\otimes_{\Coh(G)}-$ (i.e. the relative tensor product) and obtain the localization sequence
$$
\Coh(X_s)^{\op}\otimes_{\Coh(G)}\bigt{\Coh(G_t\xto{a_1} t) \hto \Coh(G_t) \to \Sing(t)}.
$$

\sssec{}

After computing these tensor products, we recognize that the rightmost dg-functor identifies with
$$
  \Coh(X_t)\to \Sing(X_T),
$$
the composition of $i_{X_T*}:\Coh(X_t)\to \Coh(X_T)$ with the quotient dg-functor $\Coh(X_T)\to \Sing(X_T)$.
As a consequence, we deduce that
$$
  \Coh(X_s)^{\op}\otimes_{\Coh(G)}\Coh(G_t\xto{a_1} t)\simeq \Coh(X_t\xto{i_{X_T}} X_T)
$$
and that
\begin{equation}\label{eqn: key loc seq}
  \Coh(X_t\xto{i_{X_T}} X_T)\hto \Coh(X_t)\to \Sing(X_T)
\end{equation}
is a localization sequence. 

\begin{rem}
This fact is nontrivial: even if $\Coh(X_t\xto{i_{X_T}} X_T)$ is by definition the kernel of $\Coh(X_t)\to \Sing(X_T)$, the equivalence
$$
  \frac{\Coh(X_t)}{\Coh(X_t\xto{i_{X_T}} X_T)}\xto{\sim}\Sing(X_T)
$$
is not obvious.
\end{rem}

\sssec{}

Now consider the \emph{motivic realization of dg-categories}, that is, the functor
$$
  \Mv_S:\dgCat_S\to \SH_S
$$
introduced in \cite{brtv18}, see Section \ref{ssec: motivic and l-adic realizations of dg-categories} for the details.
A fundamental property of $\Mv_S$ is that it sends localizations sequences to exact triangles. 
Using \eqref{eqn: key loc seq} as a key ingredient, we obtain that
\begin{equation}\label{eqnn:intro:Mv of Coh-rel}
      \Mv_S\bigt{\Coh(X_t\xto{i_{X_T}} X_T)}\simeq \Mv_S\bigt{\Perf(X_T)_{X_t}}.
\end{equation}
where $\Perf(X_T)_{X_t}$ denotes the dg-category of perfect complexes on $X_T$ with set-theoretic support contained in $X_t$.

\begin{rem}
The equivalence \eqref{eqnn:intro:Mv of Coh-rel} could be regarded as a form of \emph{d\'evissage for homotopy-invariant non-connective algebraic K-theory} and seems to be a new result interesting on its own, see \thmref{devissage-like thm}.
\end{rem}

\sssec{} 

Once the above computation of $\Mv_S\bigt{\Coh(X_t\xto{i_{X_T}} X_T)}$ is settled, we can proceed similarly to \cite{brtv18} and conclude the proof of \thmref{main thm A}.

\begin{rem}
This work is the second in a series of three papers whose goal is to prove the Deligne--Milnor conjecture following the vision of \TV{}. The first and third paper of the series are \cite{bp22} and \cite{bp24}, respectively.
\end{rem}

\ssec*{Acknowledgements}

We are grateful to Mauro Porta, Marco Robalo, Bertrand To\"en and Gabriele Vezzosi for several inspiring discussions. We thank an anonymous referee for several suggestions that contributed to improve the paper.
    
The first version of the paper was written while MP was supported by the collaborative research center SFB 1085 \emph{Higher Invariants - Interactions between Arithmetic Geometry and Global Analysis} funded by the Deutsche Forschungsgemeinschaft.
This project has been partially supported by the PEPS JCJC 2024.

\section{Preliminaries}

\ssec{Notations} 

We fix here some notations that we will adopt in the main body of the paper.

\sssec{}

Let $\scrO_{\uK}$ be a complete\footnote{In the introduction, we only assumed $\scrO_{\uK}$ to be excellent and strictly henselian. This further assumption on $\scrO_{\uK}$ is harmless in view of \cite[Expos\'e XIII, Proposition 2.1.12]{sga7ii}. 
} strict discrete valuation ring and $\uK \supseteq \scrO_{\uK}$ its fraction field. 
We assume that the residue field is perfect.

We fix once and for all a uniformizing element $\pi_{\uK} \in \scrO_{\uK}$ and denote by $\uk=\scrO_{\uK}/(\pi_{\uK})$ the (algebraically closed) residue field. 
We also fix a separable closure $\uKs$ of $\uK$ and denote by $\IK=\Gal(\uKs/\uK)$ the absolute Galois group of $\uK$, which coincides with the inertia group in this case.
Moreover, let $S$ (resp. $s$, $\eta$, $\bareta$) be the spectrum of $\scrO_{\uK}$ (resp. $\uk$, $\uK$, $\uKs$):
$$
  s\xlongto{i_S} S \xlonglto{j_S} \eta \longlto \bareta.
$$

\sssec{}\label{notat: extensions dvr}

Let $\uK \subseteq \uL$ be a finite Galois extension (viewed inside $\uKs$), which is necessarily totally ramified, and assume that the ring of integers $\scrO_{\uL}$ of $\uL$ is still a (strictly henselian) trait. 
In this case, for a fixed uniformizing element $\pi_{\uL}\in \scrO_{\uL}$, there is a unit $u\in \scrO_{\uL}^{\times}$ such that
$$
  \pi_{\uK}=u\cdot \pi_{\uL}^\ue,
$$
where $\ue=[\uL:\uK]$ is the degree of the extension (which agrees with the ramification degree in this case).

\sssec{}
Denote by $\IL=\Gal(\uKs/\uL)$ the absolute Galois group of $\uL$: this is an open normal subgroup of $\IK$. Let $\HL\simeq \Gal(\uL/\uK)$ be the (finite) quotient group $\IK/\IL$. Set $T:=\Spec(\scrO_{\uL})$ and denote by $t$ (resp. $\eta_{\uL}$, $\bareta_{\uL}$) the pullback of $s$ (resp. $\eta$, $\bareta$) along $T\to S$. We thus have Cartesian squares
\begin{equation*}
    \begin{tikzcd}[column sep= huge, row sep=large]
        t \arrow[r,"i_T"] \arrow[d] & \arrow[d]  T & \arrow[l,swap,"j_T"]  \eta_{\uL}\arrow[d] &  \arrow[l]  \bareta_{\uL} \arrow[d]
        \\
        s \arrow[r,"i_S"]  & S  & \arrow[l,swap,"j_S"] \eta & \arrow[l] \bareta. 
    \end{tikzcd}
\end{equation*}
Notice that $t\simeq \Spec\bigt{\scrO_{\uL}/(\pi_{\uK})}$ is a nil-thickening of $\Spec(\uk)$, while $\eta_{\uL}=\Spec(\uL)$ and  $\bareta_{\uL}\simeq \HL \times \bareta$.

\sssec{}

We denote by $G_t\xto{a_1} t$ the pullback of $G\hto s$ (i.e. the first projection $s\times_S s\to s$) along $t\to s$. In fact, it is easy to see that we can write $G_t$ as the (derived) pullback $t\times_T t$ and that $a_1$ agrees with the first projection:
\begin{equation*}
    \begin{tikzcd}[column sep= huge, row sep=large]
        t\times_Tt \arrow[r,"a_1"] \arrow[d,swap,"a_2"] & t \arrow[d,"i_T"]\\
        t \arrow[r, "i_T"] & T.
    \end{tikzcd}
\end{equation*}
Under the equivalence $t\times_Tt\simeq s\times_Ss\times_ST$, the map $a_1$ corresponds to the projection
$$
  \hat{\pr}_{13}:
  s\times_Ss\times_ST
  \to
  s\times_ST
$$
onto the first and third component.

\sssec{}

Throughout this paper, we will consider a proper and flat $S$-scheme $p:X\to S$, which is moreover assumed to be regular (and generically smooth). We will denote by $p_s:X_s\to s$ (resp. $p_{\uK} : X_{\uK} \to \eta$, $p_{\uKs} :X_{\uKs} \to \bareta$) the pullback of $p:X\to S$ along $s\hto S$ (resp. $\eta \hto S$, $\bareta \to S$), so that we obtain a diagram
$$
  X_s \xlongto{i_X} X \xlonglto{j_X} X_{\uK} \longlto X_{\uKs}. 
$$

\sssec{}

Similarly, we denote by $p_T:X_T\to T$ (resp. $p_t:X_t\to t$, $p_{\uL}:X_{\uL} \to \eta_{\uL}$) the pullback of $p:X\to S$ along $T\to S$ (resp. $t\to S$, $\eta_T\to S$), and get the open-closed decomposition
$$
  X_t\xlongto{i_{X_T}} X_T \xlonglto{j_{X_T}} X_{\uL}.
$$

\ssec{Higher categories} 

\sssec{}

We will freely use the theory of higher categories, see \cite{lu17, lu09}.
All functors are implicitly derived. 
Morphisms between $\oo$-categories are simply called \virg{functors}, instead of the more precise \virg{$\infty$-functors}.

\sssec{} 

We work in the framework of dg-categories \emph{up to Morita equivalences}. 
We refer to \cite{ke06, to07, to11} for exhaustive accounts.

\sssec{}

Let $\dgCats_S$ denote the (ordinary) category of small $\scrO_{\uK}$-linear dg-categories (i.e. categories enriched in cochain complexes of $\scrO_{\uK}$-modules).
A dg-functor is then just a functor compatible with these enrichments.

\sssec{}

For a dg-category $\sT$, its homotopy category is the ($\scrO_{\uK}$-linear) category $\uh \sT$ with the same objects of $\sT$ and such that
$$
  \Hom_{\uh \sT}(x,y)=\uH^0\bigt{\Hom_{\sT}(x,y)}
$$
for any objects $x,y$.

\sssec{}

Among all dg-functors, we consider the collection $\on{W}_{\on{Mor}}$ of \emph{Morita equivalences}, that is, those dg-functors $F:\sT\to \sU$ such that $F$ induces a quasi-isomorphism 
$$
  \Hom_{\sT}(x,y)\to \Hom_{\sU}\bigt{F(x),F(y)}
$$
for all pairs of objects $x,y \in \sT$ and the image of $F$ generates the Karoubi completion $\widehat{\sU}_c$ of $\sU$ (recall that $\sU \subseteq \widehat{\sU}_c$) under cones, shifts and retracts.
Then we consider the $\infty$-localization of $\dgCats_S$ along $\on{W}_{\on{Mor}}$:
$$
  \dgCat_S:=\dgCats_S[\on{W}_{\on{Mor}}^{-1}].
$$

\begin{rem}
  This $\oo$-localization has a model. 
  Indeed, in \cite{tab05}, G.~Tabuada exhibits a model category structure on $\dgCats_S$ where weak-equivalences are \emph{quasi-equivalences}, i.e. dg-functors inducing quasi-isomorphisms on the hom complexes and which induce equivalences on the homotopy categories. 
  Every quasi-equivalence is a Morita equivalence and one can take the associated Bousfield localization, which is a model category whose associated $\oo$-category is equivalent to $\dgCat_S$.
\end{rem}

\sssec{}

In \cite{to07}, B.~To\"en showed that there is a well behaved theory of dg-localizations. 
In other words, for every $\sT \in \dgCat_S$ and every (saturated) collection of morphisms $\sW \subset \sT$, there exists a dg-category $\sT[\sW^{-1}]\in \dgCat_S$ endowed with a dg-functor $\sT \to \sT[\sW^{-1}]$ which has the following universal property: it induces a fully faithful embedding of functor $\oo$-categories 
$$
  \Fun_{\dgCat_S}\bigt{\sT[\sW^{-1}],\sU}\to \Fun_{\dgCat_S}\bigt{\sT,\sU}
$$
for every $\sU\in \dgCat_S$, whose essential image consists of dg-functors $\sT \to \sU$ mapping every morphism in $\sW$ to an equivalence.

\sssec{}

There is also a theory of dg-quotients: for a sub dg-category $\sU \subset \sT$, the dg-quotient $\sT/\sU$ is the dg-localization of $\sT$ along those morphisms $x\to y$ in $\sT$ whose fiber belongs to $\sU$.
More generally, for a dg-functor $F:\sU \to \sT$, the dg-quotient $\sT/\sU$ is defined as the dg-quotient of $\sT$ by the full sub dg-category spanned by the essential image of $F$.

\sssec{}

Of major relevance for the purposes of this paper is the notion of \emph{localization sequence} in $\dgCat_S$. We say that a diagram
$$
  \sT_1\to \sT_2\to \sT_3
$$
in $\dgCat_S$ is a localization sequence if the composition is homotopic to $0$, the induced dg-functor
$$
  \sT_2/\sT_1\to \sT_3
$$
is a Morita equivalence and $\sT_1$ is the kernel of $\sT_2\to \sT_3$.

\ssec{Motivic and \texorpdfstring{$\ell$}{l}-adic realizations of dg-categories}\label{ssec: motivic and l-adic realizations of dg-categories} 
We recall  here some of the main constructions of \cite{brtv18}.

\sssec{}

Let $\SH_S$ denote the stable homotopy category of $S$-schemes introduced by F.~Morel and V.~Voevodsky in \cite{mv99} (or rather its $\oo$-categorical version, see \cite{ro15}). 
This is a stable symmetric monoidal presentable $\oo$-category endowed with a symmetric monoidal functor 
$$
  \Sigma^{\oo}_+:\Sm_S \to \SH_S
$$
which enjoys the following universal property (see \cite{ro15}). Suppose that $\ccC$ is a stable presentable symmetric monoidal $\oo$-category endowed with a symmetric monoidal functor $F:\Sm_S\to \ccC$ such that

\begin{itemize}
    \item $F$ satisfies Nisnevich descent (i.e. it sends Nisnevich squares in $\Sch_S$ to pullbacks in $\ccC$),
    \item the canonical map $\bbA^{1}_{S}\to S$ is mapped to an equivalence in $\ccC$,
    \item the fiber of $F\bigt{S\xto{\oo} \bbP^{1}_{S}}$ is an invertible object in $\ccC$;
\end{itemize}

then $F$ must factor (essentially uniquely) through $\Sigma^{\oo}_+$. 

\sssec{}

Among motivic spectra (that is, objects in $\SH_S$) there is $\BU_S$, the spectrum which represents homotopy-invariant non-connective algebraic K-theory. 
This object enjoys the \emph{algebraic Bott periodicity}, i.e. there is a canonical equivalence
$$
  \BU_S\simeq \BU_S(1)[2].
$$

\sssec{}

In \cite{ro15}, M.~Robalo constructs a non-commutative analogue of $\SH_S$ (see also \cite{ct11,ct12} for an alternative construction). 
This is a stable symmetric monoidal presentable $\oo$-category $\SHnc_S$ equipped with a symmetric monoidal functor
$$
  \iota:\dgCat_S^{\on{ft},\op} \to \SHnc_S
$$
from the opposite $\oo$-category of dg-categories of finite type (see \cite{tv07}) which enjoys the analogue universal property of $\SH_S$: for every symmetric monoidal functor $F:\dgCat_S^{\on{ft},\op}\to \ccC$, where $\ccC$ is a stable presentable symmetric monoidal $\oo$-category, such that
\begin{itemize}
    \item $F$ sends Nisnevich squares of dg-categories (see \cite{ro15}) to pullbacks in $\ccC$,
    \item the morphism $\Perf(\bbA^{1}_{S})\to \Perf(S)$ in $\dgCat_S^{\on{ft},\op}$ induced by pullback along the projection map is mapped to an equivalence in $\ccC$,
    \item the fiber of $F\bigt{\Perf(S)\xto{\oo^*}\Perf(\bbP^{1}_{S})}$ is an invertible object in $\ccC$,
\end{itemize}
then $F$ factors (essentially uniquely) through $\iota$.

\sssec{}

The composition
$$
  \Sm_S\xto{\Perf} \dgCat^{\on{ft},\op}_S\xto{\iota} \SHnc_S
$$
is symmetric monoidal and enjoys all the properties listed above. 
By the universal property of $\SH_S$, we thus obtain a functor
$$
  \ccR_{\on{pe}}: \SH_S\to \SHnc_S,
$$
called the \emph{perfect realization}. 
This is a (symmetric monoidal) colimit preserving functor between presentable stable $\oo$-categories, thus it admits a (lax-monoidal) right adjoint 
$$
  \ccM_S:\SHnc_S\to \SH_S.
$$
As proved in \cite{ro15}, this functor maps the unit object $\one^{\on{nc}}_S$ of $\SHnc_S$ to $\BU_S$.

\sssec{}

In \cite{brtv18}, the following \virg{dual} version of $\ccM_S$ is considered:
$$
  \Mv_S:\dgCat^{\on{ft}}_S\xto{\iota^\op} \SH_S^{\on{nc},\op} \xto{\ul \Hom_{\SHnc_S}(-,\one^{\on{nc}}_S)} \SHnc_S \xto{\ccM_S}\SH_S.
$$
Since $\SH_S$ is presentable and $\Ind\bigt{\dgCat^{\on{ft}}_S}\simeq \dgCat_S$ (see \cite{tv07}), 
we can extend this (lax-monoidal) functor to $\dgCat_S$:
$$
  \Mv_S:\dgCat_S\to \SH_S.
$$
This is called the \emph{motivic realization of dg-categories}. As it is lax-monoidal, we actually get a functor
$$
  \Mv_S:\dgCat_S\to \Mod_{\BU_S}(\SH_S).
$$
For a dg-category $\sT$, the motivic spectrum underlying $\Mv_S(\sT)$ is a functor $\Sm_S^{\op}\to \Sp$ (here $\Sp$ denotes the stable presentable symmetric monoidal $\oo$-category of spectra) defined on objects by the assignment
$$
  Y\mapsto \HK \bigt{\sT\otimes_{\Perf(S)}\Perf(Y)},
$$
where $\HK$ denotes homotopy-invariant non-connective algebraic $\uK$-theory.

\sssec{}

The motivic realization of dg-categories enjoys the following properties:

\begin{itemize}
  \item it preserves filtered colimits;
  \item for every qcqs $S$-scheme $q:Y\to S$ of finite type, $\Mv_S\bigt{\Perf(Y)}\simeq q_*\BU_Y$;
  \item it sends localization sequences in $\dgCat_S$ to fiber-cofiber sequences in $\SH_S$.
\end{itemize}

\sssec{}

Let $\ell$ be a prime number invertible in $\scrO_{\uK}$. The authors of \cite{brtv18} considered also the $\ell$-adic realization
$$
  \ccR^{\ell}_S:\SH_S \xto{-\otimes \uH\!\bbQ} \Mod_{\uH\!\bbQ}(\SH_S) \to \Shv(S),
$$
where $\uH\!\bbQ$ is the spectrum of rational singular cohomology.
The second functor is constructed in \emph{loc. cit.} (based on the rigidity theorems due to Ayoub and Cisinski--D\'eglise, see \cite{ay14,cd16}). 
It is a symmetric monoidal functor with values in the $\oo$-category of ind-constructible $\ell$-adic sheaves.
It follows from results of J.~Riou (see \cite{ri10}) that $\ccR^{\ell}_S(\BU_S)\simeq \Ql{,S}(\beta)=\bigoplus_{j\in \bbZ}\Ql{}(j)[2j]$.

\sssec{}

The composition
$$
  \rl_S:\dgCat_S \xto{\Mv_S} \Mod_{\BU_S}(\SH_S) \xto{\ccR^{\ell}_S} \Mod_{\Ql{,S}(\beta)}=\Mod_{\Ql{,S}(\beta)}(\shv_{\Ql{}}(S))
$$
is a lax-monoidal functor which enjoys the same properties of $\Mv_S$ (\emph{mutatis mutandis}) and it is called the \emph{$\ell$-adic realization of dg-categories}.

\ssec{Some dg-categories of interest} We will be interested in some very specific dg-categories. 

\sssec{}

Let $Y$ denote a (possibly derived) scheme of finite type over $S$. One associates to it its dg-category of quasi coherent complexes $\QCoh(Y)$. We will need to consider the following two sub-dg-categories:

\begin{itemize}
    \item the full subcategory $\Coh(Y)$, spanned by those complexes with coherent and bounded cohomology sheaves;
    \item the full subcategory $\Perf(Y)$ of perfect complexes.
\end{itemize}

Under the mild hypothesis that the structure sheaf $\ccO_Y$ is bounded, we have a fully faithful embedding $\Perf(Y)\subseteq \Coh(Y)$. 
In this case, the \emph{dg-category of (absolute) singularities of $Y$} is defined as the dg-quotient
$$
  \Sing(Y):=\frac{\Coh(Y)}{\Perf(Y)}.
$$

\sssec{}

We will also need to consider the following. 
Let $j:Z\hto Y$ be a quasi-smooth closed embedding of (derived) schemes of finite type over $S$. 
Then the pushforward $j_*:\QCoh(Z)\to \QCoh(Y)$ induces dg-functors $j_*:\Coh(Z)\to \Coh(Y)$ and $j_*:\Perf(Z)\to \Perf(Y)$.
Therefore, it induces a dg-functor
$$
  j_*:\Sing(Z)\to \Sing(Y).
$$
The \emph{dg-category of relative singularities of $Z\hto Y$} is defined as the kernel of this dg-functor:
$$
  \Sing(Z\xto{j} Y):= \Ker \bigt{j_*:\Sing(Z)\to \Sing(Y)}.
$$

\sssec{}

This dg-category also admits an alternative description.
Let $\Coh(Z\xto{j} Y)$ denote the kernel of the dg-functor
$$
  \Coh(Z)\xlongto{j_*} \Coh(Y) \tto \Sing(Y).
$$
This is the full subcategory of $\Coh(Z)$ spanned by those complexes $E\in \Coh(Z)$ whose image along $j_*$ is a perfect complex of $Y$. 
Since $j_*$ preserves perfect complexes, all perfect complexes over $Z$ lie in this subcategory: $\Perf(Z)\subseteq \Coh(Z\xto{j} Y)$. 
Thus, there is an equivalence
$$
  \frac{\Coh(Z\xto{j} Y)}{\Perf(Z)}\xto{\sim} \Sing(Z\xto{j} Y).
$$

\ssec{The monoidal dg-categories \texorpdfstring{$\sB^+$}{sB+} and \texorpdfstring{$\sB$}{sB}}

Following \cite{tv22}, we now introduce two important monoidal dg-categories. 

\sssec{}

Consider the derived fiber product $G:=s\times_S s$, i.e. the spectrum of the simplicial Koszul algebra $\uK\bigt{\scrO_{\uK},(\pi_{\uK},\pi_{\uK})}$. This is a derived groupoid scheme over $s$ (i.e. $\uK\bigt{\scrO_{\uK},(\pi_{\uK},\pi_{\uK})}$ is a Hopf algebroid).
The composition $G\times_s G\to G$ corresponds to the projection onto the first and third factor under the equivalence $G\times_s G\simeq s\times_S s\times_S s$, while the unit corresponds to the canonical morphism $u:s\to G$.

\sssec{}

This derived groupoid structure induces a monoidal convolution $\odot$ product on $\sB^+:=\Coh(G)$. 
Roughly, this is defined as the dg-functor
$$
 - \odot - :\sB^+ \otimes \sB^+ \to \sB^+
$$
$$
 (M,N)\to \pr_{13*}\bigt{\pr_{12}^*M\otimes \pr_{23}^*N},
$$
where $\pr_{ij}:G\times_sG\simeq  s\times_S s\times_S s\to G$ denotes the projection onto the $i^{\on{th}}$ and $j^{\on{th}}$ factors (which is a proper quasi-smooth map). 
The unit of this convolution product is $u_*\ccO_s$; in other words, it is $\uk$ with the obvious $\ccO_G$-module structure.

\begin{rem}
  Beware that this convolution product is associative and unital (up to coherent homotopy), but not commutative in general. In other words, $\sB^+$ is just an $\bbE_1$-algebra in $\dgCat_S$.
\end{rem}

\sssec{}

The above convolution product is compatible with perfect complexes, i.e. $M\odot N \in \Perf(G)$ as soon as $M$ or $N$ lies in $\Perf(G)$. 
Therefore, $\odot$ induces a similarly defined convolution product on the dg-category of singularities $\sB:=\Sing(G)$.

\sssec{} 

We will denote the $\oo$-category of left (resp. right) $\sB^+$-modules by $\dgCat_{\sB^+}$ (resp. $\dgCat^{\sB^+}$). An analogous notation will be employed for left (resp. right) $\sB$-modules.

\section{A useful localization sequence}

The goal of this section is to construct a localization sequence of dg-categories
$$
  \Coh(G_t\xto{a_1} t) \hto \Coh(G_t) \to \Sing(t),
$$
and then show that it is $\sB^+$-linear for the natural $\sB^+$-module structures on all terms.

\ssec{Explicit models} 

\sssec{}

Recall from \cite{pi22b} that we have the following explicit models for $\Coh(G_t)$ and $\Coh(t)$. 
The simplicial algebra $\uK \bigt{\scrO_{\uL},(\pi_{\uK}, \pi_{\uK})}$ corresponds, under the Dold-Kan equivalence, to the dg-algebra (also denoted $\uK \bigt{\scrO_L,(\pi_{\uK}, \pi_{\uK})}$, by abuse of notation)
$$
  \scrO_{\uL} \cdot h_1 h_2 \longto \scrO_{\uL} \cdot h_1 \oplus \scrO_{\uL} \cdot h_2 \longto \scrO_{\uL}
$$
placed in (cohomological) degrees $[-2,0]$ and with differential characterized by the requirement that $h_1,h_2\mapsto \pi_{\uK}$. 
Also, the variables $h_1,h_2$ anticommute and square to zero.

\sssec{}

Similarly, the simplicial algebra $\uK (\scrO_{\uL},\pi_{\uK})$ corresponds, under the Dold-Kan equivalence, to the dg-algebra (also denoted $\uK (\scrO_{\uL},\pi_{\uK})$)
$$
  \scrO_{\uL} \cdot h \longto \scrO_{\uL}
$$
placed in (cohomological) degrees $[-1,0]$ and with differential characterized by $h\mapsto \pi_{\uK}$. 
The variable $h$ squares to zero.

\sssec{}

For $i=1,2$, the morphism
$$
  a_i:t\times_T t \simeq \Spec \Bigt{\uK \bigt{\scrO_L,(\pi_{\uK}, \pi_{\uK})}} \to \Spec\bigt{\uK (\scrO_{\uL},\pi_{\uK})}\simeq t
$$
corresponds to the morphism of simplicial algebras
$$
  \uK (\scrO_{\uL},\pi_{\uK}) \to \uK \bigt{\scrO_L,(\pi_{\uK}, \pi_{\uK})}
$$
uniquely determined by $h \mapsto h_i$.

\sssec{} 

Let $\cohs\bigt{\scrO_{\uL},(\pi_{\uK},\pi_{\uK})}$ denote the strict $\scrO_{\uK}$-dg-category of dg-modules over $\uK \bigt{\scrO_{\uL},(\pi_{\uK}, \pi_{\uK})}$ with strictly perfect underlying $\scrO_{\uL}$-dg-modules. 
More explicitly, it is defined as follows:
  \begin{itemize}
      \item the objects of $\cohs\bigt{\scrO_{\uL},(\pi_{\uK},\pi_{\uK})}$ are tuplets $(E,d,\{h_1,h_2\})$, where $(E,d)$ is a strictly perfect cochain complex of $\scrO_{\uL}$-modules (i.e. degreewise projective of finite type and strictly bounded) and each $h_i$ is a $\scrO_{\uL}$-linear morphism 
      $$
        h_i:E \to E[-1]
      $$
      of degree $-1$. These data are subject to the following requirements:
   \begin{enumerate}
       \item   $h_i\circ h_i=0$, for $i=1,2$; 
       \item $[h_1,h_2]=0$;
       \item  $[d,h_i]=\pi_{\uK} \cdot \id_E$.
   \end{enumerate}
        
 \medskip
 
      \item for two such objects 
      $$\bbE=(E,d_E,\{h_{i,E}\}_{i=1,2})$$ 
      $$\bbF=(F,d_F,\{h_{i,F}\}_{i=1,2})$$
      and for each $n\in \bbZ$, the $\scrO_{\uK}$-module of degree $n$ morphisms $\Hom^n(\bbE,\bbF)$ is the submodule of 
      $$
        \bigoplus_{j\in \bbZ} \Hom_{\scrO_{\uL}}(E^j,F^{j+n}) 
      $$
      (which is isomorphic to $\displaystyle \prod_{j\in \bbZ} \Hom_{\scrO_{\uL}}(E^j,F^{j+n})$ as the complexes are strictly bounded)
      spanned by those elements $\{ \phi^j:E^j\to F^{j+n}\}_{j\in \bbZ}$ verifying the equations
      $$
        \phi^j\circ h_{i,E}^{j+1}= h_{i,F}^{j+n+1}\circ \phi^{j+1}, \hspace{0.5cm} i=1,2.
      $$
      As usual, these modules form a cochain complex by considering the differential
      $$
        \Hom^n(\bbE,\bbF)\to \Hom^{n+1}(\bbE,\bbF)
      $$
      $$
        \{ \phi^j:E^j\to F^{j+n}\}_{j\in \bbZ}\mapsto \{ d_F^{j+n}\circ \phi^j+(-1)^{n+1}\phi^{j+1}\circ d_E^j:E^j\to F^{j+n+1}\}_{j\in \bbZ}.
      $$
  \end{itemize}

\begin{rem}
Since strictly perfect cochain complexes of $\scrO_{\uL}$-modules are degreewise projective of finite rank and strictly bounded, the $\scrO_{\uL}$-module $\bigoplus_{j\in \bbZ} \Hom_{\scrO_{\uL}}(E^j,F^{j+n})$ is projective of finite rank for each $n\in \bbZ$. 
  As $\scrO_{\uL}$ is a principal ideal domain, this means that $\bigoplus_{j\in \bbZ} \Hom_{\scrO_{\uL}}(E^j,F^{j+n})$ is free of finite rank for each $n\in \bbZ$. 
  Therefore, $\Hom^n(\bbE,\bbF)\subseteq \bigoplus_{j\in \bbZ} \Hom_{\scrO_{\uL}}(E^j,F^{j+n})$ is free of finite rank for each $n\in \bbZ$ as well.
  Since $\scrO_{\uL}$ is a (faithfully) flat $\scrO_{\uK}$-algebra, $\cohs\bigt{\scrO_{\uL},(\pi_{\uK},\pi_{\uK})}$ is a locally flat $\scrO_{\uK}$-dg-category.
\end{rem}

\sssec{}

We have an analogous model for $\Coh(t)$.
Let $\cohs (\scrO_{\uL},\pi_{\uK})$ denote the strict $\scrO_K$-dg-category of $\uK (\scrO_{\uL},\pi_{\uK})$ dg-modules with strictly perfect underlying $\scrO_{\uL}$-dg-module. 
This dg-category can be described explicitly as well:
\begin{itemize}
    \item the objects of $\cohs (\scrO_{\uL},\pi_{\uK})$ are tuplets $(E,d,h)$, where $(E,d)$ is a strictly perfect complex of $\scrO_{\uL}$-modules (i.e. degreewise projective of finite rank and strictly bounded) and
    $$
      h:E \to E[-1]
    $$
    is a $\scrO_{\uL}$-linear morphism of degree $-1$ such that
    \begin{enumerate}
        \item $h^2=0$;
        \item $[d,h]=\pi_{\uK} \cdot \id_E$.
    \end{enumerate}
    \item for two such objects 
    $$\bbE=(E,d_E,h_E)$$
    $$\bbF=(F,d_F,h_F)$$ 
    and for each $n\in \bbZ$, the $\scrO_{\uK}$-module $\Hom^n(\bbE,\bbF)$ is the submodule of 
    $$
      \bigoplus_{j\in \bbZ} \Hom_{\scrO_{\uL}}(E^j,F^{j+n})
    $$
    (which is isomorphic to $\displaystyle \prod_{j\in \bbZ} \Hom_{\scrO_{\uL}}(E^j,F^{j+n})$ as the complexes are strictly bounded)
    spanned by those elements $\{\phi^j:E^j\to F^{j+n}\}_{j\in \bbZ}$ such that
    $$
      \phi^j\circ h_E^{j+1}=h_F^{j+n+1}\circ \phi^{j+1}.
    $$
    The $\scrO_{\uK}$-module $\bigoplus_{n\in \bbZ}\Hom^n(\bbE,\bbF)$ is equipped with the same differential as above.
\end{itemize}

\begin{rem}
  Just as in the previous remark, $\cohs (\scrO_{\uL},\pi_{\uK})$ is a locally flat $\scrO_{\uK}$-dg-category.
\end{rem}

\sssec{}

According to \cite{brtv18, pi22b}, the strict dg-categories $\cohs \bigt{\scrO_{\uL},(\pi_{\uK},\pi_{\uK})}$ and $\cohs (\scrO_{\uL},\pi_{\uK})$ are strict models for $\Coh(G_t)$ and $\Coh(t)$:

\begin{lem}
  Let $\on{W}_{\on{qi}}$ denote the class of quasi-isomorphisms in both the dg-categories $\cohs \bigt{\scrO_{\uL},(\pi_{\uK},\pi_{\uK})}$ and $\cohs (\scrO_{\uL},\pi_{\uK})$. 
  Then
  $$
    \cohs \bigt{\scrO_{\uL},(\pi_{\uK},\pi_{\uK})}[\on{W}_{\on{qi}}^{-1}]_{\on{dg}}\simeq \Coh(G_t),
  $$
  $$
    \cohs (\scrO_{\uL},\pi_{\uK})[\on{W}_{\on{qi}}^{-1}]_{\on{dg}}\simeq \Coh(t).
  $$
Here, on the left hand side we consider the localization of dg-categories introduced by B.~To\"en (\cite{to07}).
\end{lem}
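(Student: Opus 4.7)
The plan is to build natural dg functors
$$
\Cohs\bigt{\scrO_\uL,(\pi_\uK,\pi_\uK)} \longto \Coh(G_t), \qquad \Cohs\bigt{\scrO_\uL,\pi_\uK} \longto \Coh(t),
$$
that invert quasi-isomorphisms, and then to argue that the induced functors from the dg localizations are equivalences. The two cases are entirely parallel, so I treat only the first; set $A:=\uK\bigt{\scrO_\uL,(\pi_\uK,\pi_\uK)}$. On objects, the functor sends $(E,d,\{h_1,h_2\})$ to the associated dg $A$-module, the defining relations in the explicit description of $\Cohs$ being precisely those of a dg $A$-module structure on $E$. Quasi-isomorphisms are sent to equivalences in $\Coh(G_t)$, so by the universal property of To\"en's dg localization the functor descends to
$$
\Phi:\Cohs\bigt{\scrO_\uL,(\pi_\uK,\pi_\uK)}[\uW_{\uq\uii}^{-1}]_{\ud\ug}\longto \Coh(G_t).
$$

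The substantive step is to show that every $M\in \Coh(G_t)$ admits a quasi-isomorphic semi-free replacement $\tilde M\in \Cohs\bigt{\scrO_\uL,(\pi_\uK,\pi_\uK)}$. The two key inputs are that $A$ is strictly perfect as an $\scrO_\uL$-complex (concentrated in degrees $[-2,0]$, each term a finitely generated free $\scrO_\uL$-module), so an $A$-dg module lies in $\Coh(G_t)$ if and only if its underlying $\scrO_\uL$-complex lies in $\Coh(\Spec\scrO_\uL)$; and that $\scrO_\uL$ is a principal ideal domain, so bounded coherent $\scrO_\uL$-complexes are quasi-isomorphic to strictly perfect ones. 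Given $M$, I construct $\tilde M$ by descending induction on cohomological degree: choose finitely many closed elements of $Z^b(M)$ lifting generators of the top nonvanishing cohomology $H^b(M)$, assemble them into an $A$-linear map $A[-b]^{\oplus n_b}\to M$ surjective on $H^b$, pass to the fiber, and iterate. Boundedness of $M$ forces the process to terminate after finitely many steps; each cell $A[-k]^{\oplus n_k}$ contributes a strictly perfect $\scrO_\uL$-chunk in degrees $[k-2,k]$, so the resulting $\tilde M$ is strictly bounded and degreewise free of finite rank over $\scrO_\uL$, hence an object of $\Cohs\bigt{\scrO_\uL,(\pi_\uK,\pi_\uK)}$.

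Essential surjectivity of $\Phi$ is then immediate. For fully-faithfulness, $\tilde M$ is by construction a finite iterated semi-free extension, hence cofibrant as a dg $A$-module; consequently the strict $\Hom$-complex $\Hom_A(\tilde M,N)$ computes $\on{RHom}_A(\tilde M,N)=\Hom_{\Coh(G_t)}(\Phi(\tilde M),\Phi(N))$ for any $N\in \Cohs\bigt{\scrO_\uL,(\pi_\uK,\pi_\uK)}$. Since $\tilde M$ already lies in $\Cohs\bigt{\scrO_\uL,(\pi_\uK,\pi_\uK)}$, it provides a cofibrant resolution in the sense of To\"en's dg localization, so the mapping complex in $\Cohs\bigt{\scrO_\uL,(\pi_\uK,\pi_\uK)}[\uW_{\uq\uii}^{-1}]_{\ud\ug}$ is computed by the same $\Hom_A(\tilde M,N)$. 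Therefore $\Phi$ is an equivalence.

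The principal obstacle is the inductive construction of $\tilde M$: the bookkeeping needed to stay within the strictly bounded, strictly perfect regime over $\scrO_\uL$ at every step depends on both the Koszul form of $A$ (providing strictly perfect cells $A[-k]$) and the PID hypothesis on $\scrO_\uL$ (controlling the finiteness of the generators adjoined at each stage). Once this is in place, the rest of the proof is formal and the case of $\Cohs\bigt{\scrO_\uL,\pi_\uK}\to \Coh(t)$ follows by an identical, slightly simpler, argument.
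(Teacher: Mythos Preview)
The paper itself does not give a proof here; it simply defers to \cite{brtv18,pi22b}. Your outline has the right overall shape, but the essential surjectivity step contains a genuine gap.

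The claim that ``boundedness of $M$ forces the process to terminate after finitely many steps'' is false. Take $M=\scrO_\uL/\pi_\uK$, viewed as an $A$-module via the augmentation $A\to H^0(A)=\scrO_\uL/\pi_\uK$. The map $A\to M$ is an isomorphism on $H^0$, and a direct long-exact-sequence computation shows that its fiber has cohomology $\scrO_\uL/\pi_\uK$ concentrated in degree $-1$. Iterating, the $n$-th fiber has cohomology $\scrO_\uL/\pi_\uK$ concentrated in degree $-(n+1)$, so the cell-attachment process never stops. The underlying reason is that $(\pi_\uK,\pi_\uK)$ is not a regular sequence, so $H^{-1}(A)\neq 0$ and objects of $\Coh(G_t)$ typically have \emph{infinite} Tor-amplitude over $A$; finite semi-free $A$-resolutions simply do not exist in general.

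The point you are missing is that objects of $\Cohs\bigt{\scrO_\uL,(\pi_\uK,\pi_\uK)}$ are \emph{not} required to be semi-free over $A$: they only need a strictly perfect underlying $\scrO_\uL$-complex, with the $h_i$ supplied by hand. For instance, $M=\scrO_\uL/\pi_\uK$ above is represented by the two-term complex $[\scrO_\uL\xto{\pi_\uK}\scrO_\uL]$ with $h_1=h_2=\mathrm{id}$, which lies in $\Cohs$ but is certainly not semi-free. The argument in the cited references proceeds along these lines --- either by exhibiting explicit generators of $\Coh(G_t)$ that visibly lie in $\Cohs$, or by truncating an infinite resolution in a way that respects the $h_i$ (using crucially that $\scrO_\uL$ has global dimension $1$). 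Your fully-faithfulness argument also leans on the cells $\tilde M$ being semi-free, so once you drop that you will need a separate check that objects of $\Cohs$ are $K$-projective as dg $A$-modules; this is true, but it follows from the strict perfection of the underlying $\scrO_\uL$-complex rather than from a cell structure.
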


\sssec{}

Using these strict models, it is easy to give strict models of the dg-functors

$$
  a_i^*: \Coh(t)\to \Coh(G_t), \hspace{0.5cm} i=1,2
$$
and 

$$
  a_{i*}:\Coh(G_t)\to \Coh(t), \hspace{0.5cm} i=1,2.
$$
Indeed, the pushforward along $a_i$ ($i=1,2$) corresponds to the dg-functor

$$
  \cohs \bigt{\scrO_{\uL},(\pi_{\uK},\pi_{\uK})} \to \cohs (\scrO_{\uL},\pi_{\uK})
$$
defined on objects by

$$
  (E,d,\{h_s\}_{s=1,2})\to (E,d,h_i)
$$
and on morphisms by the inclusion
\begin{eqnarray}
\nonumber
  \Hom\bigt{(E,d_E,\{h_{E,s}\}_{s=1,2}),(F,d_F,\{h_{F,s}\}_{s=1,2})} & \subseteq & \Hom\bigt{(E,d_E,\{h_{E,i}\}),(F,d_F,\{h_{F,i}\}} 
  \\
  \nonumber
  & \subseteq &  \bigoplus_{j,n\in \bbZ}\Hom_{\scrO_{\uL}}(E^j,F^{j+n}). 
\end{eqnarray}
This is obviously compatible with the differentials, with the identities and with the composition of morphisms. Moreover, it obviously preserves quasi-isomorphisms and its dg-localization along $\on{W}_{\on{qi}}$ is equivalent to
$$
  a_{i*}:\Coh(G_t)\to \Coh(t), \hspace{0.5cm} i=1,2.
$$

\sssec{}

The pullback along $a_i$ ($i=1,2$) can be \virg{strictified} as well. 
Consider the dg-functor
$$
  \cohs (\scrO_{\uL},\pi_{\uK})\to \cohs \bigt{\scrO_{\uL},(\pi_{\uK},\pi_{\uK})}
$$
defined by sending an object $(E,d,h)$ to $E \oplus E[1]$ with differential
$$
  \begin{bmatrix}
    d & \pi_{\uK} \cdot \id_E 
    \\
    0 & -d
  \end{bmatrix}: E \oplus E[1]\to E[1] \oplus E[2]
$$
and where $h_i \in \uK \bigt{\scrO_{\uL},(\pi_{\uK},\pi_{\uK})}$ acts via
$$
  \begin{bmatrix}
    h & 0
    \\
    0 & -h
  \end{bmatrix}: E \oplus E[1]\to E[-1] \oplus E,
$$
while $h_j\in \uK \bigt{\scrO_{\uL},(\pi_{\uK},\pi_{\uK})}$ ($j\in \{1,2\}\setminus \{i\}$) acts via
$$
  \begin{bmatrix}
    0 & 0
    \\
    \id_E & 0
  \end{bmatrix}: E \oplus E[1]\to E[-1] \oplus E.
$$
It is defined on morphisms as
$$
  \phi:E\to F \mapsto 
  \begin{bmatrix}
    \phi & 0
    \\
    0 & \phi[1]
  \end{bmatrix}
  : E \oplus E[1] \to F \oplus F[1].
$$
It is straightforward to verify that this is compatible with the differentials, with the identities and with the composition.
This dg-functor also preserves quasi-isomorphisms and its localization along $\on{W}_{\on{qi}}$ is equivalent to
$$
  a_i^*:\Coh(t)\to \Coh(G_t), \hspace{0.5cm} i=1,2.
$$

\begin{rem}
The two pairs of dg-functors
  $$
    a_{i}^*:
    \cohs (\scrO_{\uL},\pi_{\uK})
    \rightleftarrows 
    \cohs \bigt{\scrO_{\uL},(\pi_{\uK},\pi_{\uK})}
    :a_{i*}
  $$
are adjunctions.
\end{rem}

\begin{rem}
  The dg-functor $a_2^* : \Coh(t)\to \Coh(G_t)$ factors through the full embedding $\Coh(G_t\xto{a_1}t)\subset \Coh(G_t)$. 
  This is an immediate consequence of the base-change equivalence
  $$
    a_{1*}\circ a_2^* \simeq i_T^*\circ i_{T*}
  $$
  and of the regularity of $T$.
\end{rem}

\ssec{The main computation}

We use the above explicit models to prove \corref{useful loc seq}, which is the main ingredient for \thmref{devissage-like thm}.

\begin{rem}\label{Coh(G_t-->t)-->Coh(G_t)-->Sing(t) loc seq iff Sing(G_t-->t)-->Sing(G_t)-->Sing(t)}
The chain
  $$
    \Coh(G_t\xto{a_1} t) \hto \Coh(G_t) \to \Sing(t)
  $$
  is a localization sequence if and only if so is
  $$
    \Sing(G_t\xto{a_1} t) \hto \Sing(G_t) \to \Sing(t).
  $$
\end{rem}

\begin{lem}\label{Coh(G_t)/Coh(G_t -> t) is two periodic}
The dg-category 
  $$
    \frac{\Coh(G_t)}{\Coh(G_t\xto{a_1}t)}\simeq \frac{\Sing(G_t)}{\Sing(G_t\xto{a_1}t)} 
  $$
is $2$-periodic, i.e. there is a natural equivalence of dg-functors
  $$
    \id\simeq [2]: \frac{\Coh(G_t)}{\Coh(G_t\xto{a_1}t)}\to \frac{\Coh(G_t)}{\Coh(G_t\xto{a_1}t)}. 
  $$
Here $[2]=[1]\circ [1]$ denotes the double shift functor.
\end{lem}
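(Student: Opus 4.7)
The plan is first to reduce to showing a natural equivalence $\on{id}\simeq [2]$ on the single quotient $\Coh(G_t)/\Coh(G_t\xto{a_1}t)$, and then to construct such an equivalence from the Koszul structure of $\CO_{G_t}$ via a convolution argument. The reduction will rest on the formal identification $\Coh(G_t)/\Coh(G_t\xto{a_1}t)\simeq \Sing(G_t)/\Sing(G_t\xto{a_1}t)$, which I obtain from the inclusion $\Perf(G_t)\subseteq \Coh(G_t\xto{a_1}t)$: indeed, $a_1$ is proper and quasi-smooth (so $a_{1*}$ preserves perfect complexes) and $a_{1*}\CO_{G_t}\simeq \CO_t\oplus\CO_t[1]\in \Perf(t)$.

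The key ingredient will be the convolution monoidal structure on $\Coh(G_t)$ coming from the (derived) groupoid structure on $G_t=t\times_T t$ over $t$; its unit is $u_*\CO_t$, where $u:t\to G_t$ is the identity section. Under the Koszul identification $\CO_{G_t}\simeq \Lambda_{\CO_t}(h)$ with $h$ in cohomological degree $-1$, the augmentation $\CO_{G_t}\to u_*\CO_t$ (modding out the principal ideal $(h)$) fits in the cofiber sequence
\[
  u_*\CO_t[1]\longto \CO_{G_t}\longto u_*\CO_t
\]
in $\Coh(G_t)$, whose leftmost term is $\CO_t\cdot h\simeq \CO_t[1]$ equipped with the trivial $h$-action. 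I will convolve this cofiber sequence with an arbitrary $F\in \Coh(G_t)$; using the unit property $u_*\CO_t\odot F\simeq F$, this yields the triangle
\[
  F[1]\longto \CO_{G_t}\odot F\longto F.
\]

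The heart of the argument is then to verify that $\CO_{G_t}\odot F\in \Coh(G_t\xto{a_1}t)$ for every $F$. I will apply proper base change to the Cartesian square formed by the groupoid multiplication $m:G_t\times_t G_t\to G_t$ and the target $a_2:G_t\to t$ (with $a_2$ on both \virg{target} legs), obtaining $m_*\on{pr}_2^*\simeq a_2^*a_{2*}$ and hence $\CO_{G_t}\odot F\simeq m_*\on{pr}_2^*F\simeq a_2^*a_{2*}F$. A second base change, around the defining Cartesian square $G_t=t\times_T t$, gives $a_{1*}a_2^*\simeq i_T^*i_{T*}$, so $a_{1*}(\CO_{G_t}\odot F)\simeq i_T^*i_{T*}(a_{2*}F)$. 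The latter lies in $\Perf(t)$ because $T$ is regular, whence $i_{T*}$ sends $\Coh(t)$ into $\Perf(T)=\Coh(T)$, and $i_T^*$ preserves perfectness. Thus $\CO_{G_t}\odot F$ is killed in the quotient $\Coh(G_t)/\Coh(G_t\xto{a_1}t)$, the convolved triangle collapses to an equivalence $F\xto{\sim}F[2]$ via the connecting morphism, and naturality in $F$ delivers the desired natural equivalence $\on{id}\simeq [2]$.

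The main obstacle will be setting up the convolution monoidal structure on $\Coh(G_t)$ and verifying the two derived base-change identities invoked above within the appropriate dg/$\infty$-categorical framework; these are standard but do require some bookkeeping. Once they are in place, the Koszul cofiber sequence together with its convolution with $F$ delivers the natural $2$-periodicity essentially immediately, with no further input.
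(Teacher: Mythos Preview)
Your proposal is correct and lands on the same key triangle as the paper, namely
\[
  a_2^*a_{2*}F \longrightarrow F \longrightarrow F[2],
\]
with $a_2^*a_{2*}F\in \Coh(G_t\xto{a_1}t)$ (via $a_{1*}a_2^*\simeq i_T^*i_{T*}$ and regularity of $T$). The difference is in how this triangle is produced. The paper works entirely in the explicit strict models $\Cohs(\scrO_\uL,(\pi_\uK,\pi_\uK))$: it writes down the counit $a_2^*a_{2*}E\to E$ for $E$ concentrated in three degrees, computes its cone by hand, and exhibits an explicit quasi-isomorphism $E[2]\to \cone$. Your route is more conceptual: you introduce the convolution monoidal structure on $\Coh(G_t)$ coming from the groupoid $t\times_T t$, identify $\CO_{G_t}\odot F\simeq a_2^*a_{2*}F$ via base change along the multiplication square, and then obtain the triangle by convolving the Koszul cofiber sequence $u_*\CO_t[1]\to \CO_{G_t}\to u_*\CO_t$ with $F$.

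What each buys: your argument is coordinate-free and makes the $2$-periodicity manifestly natural in $F$ from the outset, at the cost of having to set up (or at least invoke) a convolution product on $\Coh(G_t)$ that the paper does not introduce (the paper only uses the $\CB^+$-module structures). The paper's computation avoids that extra structure and stays within the strict models already in play, at the price of a somewhat lengthy matrix calculation; it also feeds directly into the subsequent lemmas about generators in two degrees, which rely on those same models. Both arguments reduce the membership $a_2^*a_{2*}F\in\Coh(G_t\xto{a_1}t)$ to the same base-change identity, so the genuinely new input in your version is the convolution packaging of the counit triangle.
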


\begin{proof}
We will show that, for every $E \in \Coh(G_t)$, there is a functorial exact triangle
$$
  a_2^* a_{2*}E\to E \xto{u_E} E[2]
$$
in $\Sing(G_t)$, where the first morphism is the counit of the adjunction $(a_2^*,a_{2*})$.\footnote{By abuse of notation we still denote $E\in \Sing(G_t)$ the image of $E\in \Coh(G_t)$ along $\Coh(G_t)\to \Sing(G_t)$.}

In the diagrams below, we will write horizontally from left to right the differentials of a complex and from right to left the homotopies which are part of the datum for an object in $\cohs\bigt{\scrO_{\uL},(\pi_{\uK},\pi_{\uK})}$. 
Morphisms in this category will be written vertically.
Notice that it suffices to consider an object in  
$\Sing(G_t)$ represented by some $(E,d,\{h_1,h_2\})\in \cohs\bigt{\scrO_{\uL},(\pi_{\uK},\pi_{\uK})}$ with $E$ concentrated in three degrees at most, say $[n,n+2]$ (see \cite[Theorem 2.7]{pi22b}):

\begin{equation*}
    \begin{tikzcd}
        E^n \arrow[r,shift right,swap,"d^n"]& \arrow[l,shift right,swap,"h_i^{n+1}"] E^{n+1} \arrow[r,shift right,swap,"d^{n+1}"] & \arrow[l,shift right,swap,"h_i^{n+21}"] E^{n+2}.
    \end{tikzcd}
\end{equation*}
In this case, using the strict models above, one computes that $a_2^*a_{2*}E\to E$ is
\begin{equation*}
    \begin{tikzcd}[column sep=huge, row sep=70, ampersand replacement = \&]
        E^n 
          \arrow[r,shift right,swap,"\begin{bmatrix} \pi_{\uK} \\ -d^n \end{bmatrix}"] 
      \& 
          \arrow[l,shift right,swap, "{\begin{bmatrix} 1 & 0 \end{bmatrix}}"]
          \arrow[l,bend right=60,shift right,swap, "{\begin{bmatrix} 0 & -h_2^{n+1} \end{bmatrix}}"]
        E^{n}\oplus E^{n+1} 
          \arrow[d,"{\begin{bmatrix} \id & h_1^{n+1} \end{bmatrix}}"]
          \arrow[r,shift right,swap,"{\begin{bmatrix} d^n & \pi_{\uK} \\ 0  & -d^{n+1}\end{bmatrix}}"] 
      \& 
          \arrow[l,shift right,swap,"{\begin{bmatrix}0 & 0\\ 1 & 0\end{bmatrix}}"]
          \arrow[l,bend right=60,shift right,swap, "{\begin{bmatrix}h_2^{n+1} & 0 \\ 0 & -h_2^{n+2} \end{bmatrix}}"]
        E^{n+1}\oplus E^{n+2} 
          \arrow[d,"{\begin{bmatrix} \id & h_1^{n+2}\end{bmatrix}}"] 
          \arrow[r,shift right,swap,"{\begin{bmatrix}d^{n+1} &  \pi_{\uK} \\ 0 & -d^{n+2} \end{bmatrix}}"] 
      \&
          \arrow[l,shift right,swap,"{\begin{bmatrix} 0 & 0\\ 1 & 0 \end{bmatrix}}"]  
          \arrow[l,bend right=60,shift right,swap, "{\begin{bmatrix}h_2^{n+2}  \\ 0  \end{bmatrix}}"]
        E^{n+2} 
          \arrow[d,"\id"] 
      \\
      \&
        E^n 
          \arrow[r,shift right,swap,"d^n"] 
      \&
          \arrow[l,shift right,swap,"h_i^{n+1}"] 
        E^{n+1} 
          \arrow[r,shift right,swap,"d^{n+1}"] 
      \& 
          \arrow[l,shift right,swap,"h_i^{n+21}"] 
        E^{n+2}.
    \end{tikzcd}
\end{equation*}
Then we have the following morphism from $E[2]$ to the cone of the morphism displayed above
\begin{equation*}
    \begin{tikzcd}[column sep=35, row sep=70, ampersand replacement = \&]
        E^n 
          \arrow[r,shift right,swap,"{\begin{bmatrix} -\pi_{\uK} \\ d^n \end{bmatrix}}"] 
      \& 
          \arrow[l,shift right,swap, "{\begin{bmatrix} -1 & 0 \end{bmatrix}}"]
          \arrow[l, bend right=60,shift right,swap, "{\begin{bmatrix} 0 & h_2^{n+1} \end{bmatrix}}"]
        E^{n}\oplus E^{n+1}        
           \arrow[r,shift right,swap,"{\begin{bmatrix}\id & h_1^{n+1}\\ -d^n & -\pi_{\uK} \\ 0  & d^{n+1}\end{bmatrix}}"] 
      \& 
          \arrow[l,shift right,swap,"{\begin{bmatrix}0 & 0 & 0\\ 0 & -1 & 0\end{bmatrix}}"]
          \arrow[l,bend right=65,shift right,swap, "{\begin{bmatrix}0 & -h_2^{n+1} & 0 \\ 0 & 0 & h_2^{n+2} \end{bmatrix}}"]
        E^{n}\oplus E^{n+1}\oplus E^{n+2}          
          \arrow[r,shift right,swap,"{\begin{bmatrix}d^{n} & \id & h_1^{n+2} \\ 0 &  -d^{n+1}  & -\pi_{\uK}\end{bmatrix}}"] 
      \&
          \arrow[l,shift right,swap,"{\begin{bmatrix} h_1^{n+1} & 0\\ 0 & 0 \\ 0 & -1 \end{bmatrix}}"]  
          \arrow[l,bend right=95,shift right,swap, "{\begin{bmatrix}h_2^{n+1} & 0  \\ 0 & -h_2^{n+2}\\ 0 & 0  \end{bmatrix}}"]
        E^{n+1}\oplus E^{n+2}    
          \arrow[r,shift right,swap,"{\begin{bmatrix} d^{n+1} & \id\end{bmatrix}}"]
      \&
          \arrow[l,shift right,swap,"{\begin{bmatrix} h_i^{n+2} \\ 0\end{bmatrix}}"]
        E^{n+2}
       \\
        E^n 
          \arrow[r,shift right,swap,"d^n"] 
          \arrow[u,"-\id"] 
      \&
          \arrow[l,shift right,swap,"h_i^{n+1}"] 
        E^{n+1} 
          \arrow[r,shift right,swap,"d^{n+1}"] 
          \arrow[u,"{\begin{bmatrix} h_1^{n+1} \\-\id\end{bmatrix}}"] 
      \& 
         \arrow[l,shift right,swap,"h_i^{n+21}"] 
        E^{n+2}.
          \arrow[u,"{\begin{bmatrix} 0 \\ h_1^{n+2} \\ -\id\end{bmatrix}}"] 
      \&
      \&
    \end{tikzcd}
\end{equation*}
It is easy to check that the latter induces isomorphisms on cohomology groups. Now recall that the object $a_2^* a_{2*}E$ belongs to $\Coh(G_t\xto{a_1}t)$ and it is thus zero in the dg-quotient $\Coh(G_t)/\Coh(G_t\xto{a_1}t)$. 
In other words, the morphism
$$
  u_E:E\to E[2]
$$
becomes an equivalence in $\Coh(G_t)/\Coh(G_t\xto{a_1}t)$. 
This shows that there is a canonical equivalence $\id \simeq [2]$ on the dg-quotient of $\Coh(G_t\xto{a_1}t)\hto \Coh(G_t)$ computed in $\dgCats_S[\on{W}_{\on{qe}}^{-1}]$, the $\oo$-localization of $\dgCats_S$ with respect to quasi-equivalences.
The objects of this quotient dg-category are in correspondence with those of $\Coh(G_t)$.
Since $\Coh(G_t)/\Coh(G_t\xto{a_1}t)$ is Karoubi generated by the image of the canonical functor $\Coh(G_t)\to \Coh(G_t)/\Coh(G_t\xto{a_1}t)$ and the morphism $u_E$ is functorial in $E$, we obtain a natural equivalence
$$
  u:\id \xto{\sim} [2]: \frac{\Coh(G_t)}{\Coh(G_t\xto{a_1}t)}\to \frac{\Coh(G_t)}{\Coh(G_t\xto{a_1}t)}
$$
as claimed.
\end{proof}

\sssec{}

We will also need the following characterization of the objects in the quotient of ${\Coh(G_t)}$ by ${\Coh(G_t\xto{a_1}t)}$.

\begin{lem}\label{Coh(G_t)/Coh(G_t-->t) is generated by complexes concentrated in two degrees}
  The dg-category $\Coh(G_t)/\Coh(G_t\xto{a_1}t)$ is Karoubi generated by the images along 
  $$
    \cohs \bigt{\scrO_{\uL},(\pi_{\uK},\pi_{\uK})}\to \frac{\Coh(G_t)}{\Coh(G_t\xto{a_1}t)}
  $$
  of objects $(E,d,\{h_i\}_{i=1,2})$, where $(E,d)$ is a cochain complex concentrated in at most two degrees.
\end{lem}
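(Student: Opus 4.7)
The strategy combines the three-term generation of $\Sing(G_t)$ from \cite[Theorem~2.7]{pi22b} (already invoked in the proof of \lemref{Coh(G_t)/Coh(G_t -> t) is two periodic}) with the two-periodicity established in \lemref{Coh(G_t)/Coh(G_t -> t) is two periodic}. By the cited result and the equivalence recorded in \remref{Coh(G_t-->t)-->Coh(G_t)-->Sing(t) loc seq iff Sing(G_t-->t)-->Sing(G_t)-->Sing(t)}, the quotient $\Coh(G_t)/\Coh(G_t\xto{a_1}t) \simeq \Sing(G_t)/\Sing(G_t\xto{a_1}t)$ is already Karoubi generated by images of objects $(E,d,\{h_1,h_2\}) \in \Cohs\bigt{\scrO_\uL,(\pi_\uK,\pi_\uK)}$ with $E$ concentrated in at most three consecutive degrees $[n,n+2]$. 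It therefore suffices to exhibit, for every such $E$, a two-term object $F\in\Cohs\bigt{\scrO_\uL,(\pi_\uK,\pi_\uK)}$ whose image in the quotient agrees with that of $E$.

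The natural candidate is obtained by a folding construction motivated by the two-periodicity $\id\simeq[2]$: the $\bbZ$-grading on the quotient effectively collapses to a $\bbZ/2$-grading and degrees $n$ and $n+2$ become identified. Concretely, for $E=(E^n \xto{d^n} E^{n+1} \xto{d^{n+1}} E^{n+2})$, set $F^n := E^n \oplus E^{n+2}$, $F^{n+1} := E^{n+1}$, define $d_F(a,c) := d^n(a) + h_1^{n+2}(c)$, and assemble $h_1^F(b) := (h_1^{n+1}(b), d^{n+1}(b))$ together with a suitable $h_2^F$. The relations $h_1^F d_F = d_F h_1^F = \pi_\uK \cdot \id$ follow at once from the $\Cohs$-identities for $E$: the equalities $h_i^{n+1} d^n = \pi_\uK$ and $d^{n+1} h_i^{n+2} = \pi_\uK$ hold on the outer degrees, $d^2 = h_i^2 = 0$, and the middle-degree identity $d^n h_i^{n+1} + h_i^{n+2} d^{n+1} = \pi_\uK$ delivers $d_F h_1^F = \pi_\uK$. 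One then exhibits an explicit comparison morphism (in the spirit of the construction used in \lemref{Coh(G_t)/Coh(G_t -> t) is two periodic}) between $F$ and an appropriate shift of $E$ in $\Cohs$ whose cofiber lies in the image of $a_2^* : \Coh(t) \to \Coh(G_t)$. Since this image is contained in $\Coh(G_t\xto{a_1}t)$ by the base-change equivalence $a_{1*} a_2^* \simeq i_T^* i_{T*}$ already used above, the comparison becomes an equivalence in the quotient and $\bar E \simeq \bar F$.

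\textbf{Main obstacle.} The delicate point is to accommodate both homotopies $h_1, h_2$ simultaneously in the folded object $F$ while preserving the anticommutation $\{h_1^F, h_2^F\} = 0$. The symmetric ansatz $h_i^F(b) := (h_i^{n+1}(b), d^{n+1}(b))$ succeeds for $h_1$ in isolation, but the verification of $h_2^F d_F = \pi_\uK$ produces a Koszul mixed term $h_2^{n+1} h_1^{n+2}$ which, by the anticommutation $\{h_1, h_2\}=0$ on $E$, equals $-h_1^{n+1} h_2^{n+2}$ and does not vanish in general. Overcoming this obstruction — either by enlarging $F$ with an auxiliary direct summand (mimicking the explicit model of $a_2^*$ used in the proof of \lemref{Coh(G_t)/Coh(G_t -> t) is two periodic}, which thereby absorbs the Koszul correction into a term that is trivial in the quotient) or by choosing $d_F$ as a coordinated linear combination involving both $h_1^{n+2}$ and $h_2^{n+2}$ — is the main bookkeeping step of the argument.
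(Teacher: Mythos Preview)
Your starting point---three-term generation from \cite[Theorem~2.7]{pi22b}---is the same as the paper's, but from there you take an unnecessarily hard route and do not finish it. The folding construction you sketch genuinely runs into the obstacle you name: once you use $h_1^{n+2}$ to build the differential of $F$, the relation $[d_F,h_2^F]=\pi_\uK$ forces a Koszul cross-term that you have not eliminated, and neither of your proposed fixes (auxiliary summand, coordinated linear combination) is carried out. As written, the argument is incomplete.

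The paper's proof avoids two-periodicity altogether and is purely algebraic. Given a three-term object $E^{n-1}\xto{d^{n-1}}E^n\xto{d^n}E^{n+1}$ in $\Cohs\bigt{\scrO_\uL,(\pi_\uK,\pi_\uK)}$, one uses that $\scrO_\uL$ is a PID: the submodule $\Ker(d^n)\subset E^n$ is therefore free of finite rank, so $E':=\bigl(E^{n-1}\xto{d^{n-1}}\Ker(d^n)\bigr)$ is a bona fide two-term object of $\Cohs\bigt{\scrO_\uL,(\pi_\uK,\pi_\uK)}$ (on $\Ker(d^n)$ the identity $d^{n-1}h_i^n+h_i^{n+1}d^n=\pi_\uK$ collapses to $d^{n-1}h_i^n=\pi_\uK$, so both $h_i$'s restrict without any correction). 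Likewise the quotient $E'':=\bigl(E^n/\Ker(d^n)\xto{\bar d^n}E^{n+1}\bigr)$ is a two-term object, its degree-$n$ piece being isomorphic to $\operatorname{Im}(d^n)\subset E^{n+1}$ and hence free; the induced $h_i$'s again satisfy the required identities because $d^{n-1}h_i^n$ lands in $\Ker(d^n)$ and thus vanishes in the quotient. Now $E$ is an extension of $E''$ by $E'$ already in $\Cohs\bigt{\scrO_\uL,(\pi_\uK,\pi_\uK)}$, so its image in the quotient lies in the triangulated hull of two-term objects. No periodicity and no bookkeeping with $h_1$ versus $h_2$ is required: both homotopies are simply restricted and projected.
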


\begin{proof}
  Thanks to the equivalence
  $$
    \frac{\Coh(G_t)}{\Coh(G_t\xto{a_1}t)}\simeq \frac{\Sing(G_t)}{\Sing(G_t\xto{a_1}t)},
  $$
  it follows from \cite[Theorem 2.7]{pi22b} that this dg-quotient is Karoubi generated by the images of objects $(E,d,\{h_i\}_{i=1,2})$, where $(E,d)$ is a cochain complex concentrated in at most three degrees:
  $$
    E^{n-1}\xto{d^{n-1}}E^n \xto{d^n} E^{n+1}.
  $$
  Now, $E^{n-1},E^n$ and $E^{n+1}$ are finitely generated projective $\scrO_{\uL}$-modules. 
  The characterization of finitely generated modules over a principal ideal domain guarantees that each is free of finite rank. 
  Now, $\Ker(d^n)\subset E^n$ and $\on{Im}(d^{n})\subset E^{n+1}$ are submodules of free $\scrO_{\uL}$-modules of finite rank. 
  Thus, they are both finitely generated and torsion free; 
  hence they are also free $\scrO_{\uL}$-modules of finite rank.
  It follows that the cochain complexes (with the obvious $\uK \bigt{\scrO_{\uL},(\pi_{\uK},\pi_{\uK})}$-module structures)
  $$
    E^{n-1}\xto{d^{n-1}} \Ker(d^n), \hspace{0.5cm} \on{Im}(d^{n})\hto E^{n+1}
  $$
  belong to $\cohs \bigt{\scrO_{\uL},(\pi_{\uK},\pi_{\uK})}$.
  
  Moreover, $(E,d,\{h_i\}_{i=1,2})$ is clearly an extension of $\on{Im}(d^{n})\hto E^{n+1}$ by $E^{n-1}\xto{d^{n-1}} \Ker(d^n)$.
  We conclude that $\Coh(G_t)/\Coh(G_t\xto{a_1}t)$ is Karoubi generated by the images of those objects $(E,d,\{h_i\}_{i=1,2}) \in \Coh\bigt{\scrO_{\uL},(\pi_{\uK},\pi_{\uK})}$  such that $(E,d)$ is a cochain complex concentrated in two degrees at most. 
\end{proof}

\begin{rem}\label{h_1=h_2 if the complex is concentrated in two degrees}
  Suppose that an object 
  $$
  (E,d,\{h_i\}_{i=1,2})=(E^n\xto{d}E^{n+1}) \in \cohs \bigt{\scrO_{\uL},(\pi_{\uK},\pi_{\uK})}
  $$
  is concentrated in two degrees. 
  Then $h_1=h_2$. 
  In fact, the equations imposed on $d$ and $h_i$, combined with the fact that $\pi_{\uK}\in \scrO_{\uL}$ is not a zero-divisor, imply that $d$ is injective and that
  $$
    d\circ h_1=d\circ h_2.
  $$
\end{rem}

\begin{lem}
  In \lemref{Coh(G_t)/Coh(G_t-->t) is generated by complexes concentrated in two degrees}, it suffices to consider those complexes generated in degrees $[0,1]$.
\end{lem}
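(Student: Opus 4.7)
The plan is to reduce to \lemref{Coh(G_t)/Coh(G_t-->t) is generated by complexes concentrated in two degrees} by exploiting the fact that the shift functor is an auto-equivalence of the dg quotient $\Coh(G_t)/\Coh(G_t \xto{a_1} t)$ and that Karoubi closures are invariant under auto-equivalences. That lemma already provides Karoubi generation by images of strict objects $(E, d, \{h_1, h_2\}) \in \Cohs\bigt{\scrO_\uL, (\pi_\uK, \pi_\uK)}$ whose underlying complex $(E, d)$ sits in some two consecutive degrees $[n, n+1]$, so the task reduces to showing that each such image lies in the Karoubi closure of the images of strict objects concentrated in degrees $[0, 1]$.

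To that end, I would form the strict shift $(E, d, \{h_1, h_2\})[n]$, adjusting signs so that the defining relations $h_i^2 = 0$, $[h_1, h_2] = 0$, and $[d, h_i] = \pi_\uK \cdot id_E$ continue to hold. This produces another object of $\Cohs\bigt{\scrO_\uL, (\pi_\uK, \pi_\uK)}$ whose underlying complex is concentrated in degrees $[0, 1]$ and whose image in the dg quotient equals the $[n]$-shift of the image of the original. Hence the image of $(E, d, \{h_1, h_2\})$ is the $[-n]$-shift of an object lying in the proposed generating subcategory; since Karoubi closures in a stable $\infty$-category are closed under shifts, this image lies in the Karoubi closure of the images of $[0, 1]$-concentrated strict objects.

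Combined with the preceding lemma, this yields the claim. The only mildly technical point is the sign bookkeeping needed to verify that the strict shift of an object of $\Cohs\bigt{\scrO_\uL, (\pi_\uK, \pi_\uK)}$ is again of the same form; once this is checked, the remainder is formal from the standard properties of shifts. As an alternative route, one could instead invoke the $2$-periodicity of \lemref{Coh(G_t)/Coh(G_t -> t) is two periodic} to reduce $n$ modulo $2$ and then dispose of the remaining parity with a single application of the shift $[\pm 1]$; the two arguments are essentially equivalent.
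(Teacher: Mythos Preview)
Your argument is correct and noticeably more direct than the paper's. You observe that the strict dg category $\Cohs\bigt{\scrO_\uL,(\pi_\uK,\pi_\uK)}$ is closed under the naive shift (with the standard sign adjustments on $d$ and the $h_i$), so an object concentrated in $[n,n+1]$ shifts to one concentrated in $[0,1]$, and then the original is recovered in the quotient as a shift of the latter; since the thick (Karoubi) closure is closed under shifts, you are done.

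The paper instead argues as follows: using the $2$-periodicity of \lemref{Coh(G_t)/Coh(G_t -> t) is two periodic} it reduces to showing that, for $A$ represented by a two-term strict object, the shift $A[1]$ is again representable by a strict object concentrated in the \emph{same} two degrees. For this it invokes \remref{h_1=h_2 if the complex is concentrated in two degrees} to see that any such $A$ lies in the image of the diagonal pushforward $\delta_*:\Sing(t)\to \Coh(G_t)/\Coh(G_t\xto{a_1}t)$, and then appeals to \cite[Corollary~3.7]{pi22b} to compute the shift inside $\Sing(t)$. Your route avoids the $2$-periodicity lemma, the factorization through $\Sing(t)$, and the external citation altogether; the paper's route, on the other hand, makes the link with $\Sing(t)$ explicit, which is consonant with the matrix-factorization description used immediately afterwards in \propref{an useful localization sequence}. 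Either way the content needed for that proposition (that the $[0,1]$-concentrated objects thickly generate $\uh(\fQ)$) is secured.
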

\begin{proof}
  By the two-periodicity of $\Coh(G_t)/\Coh(G_t\xto{a_1}t)$ (see \lemref{Coh(G_t)/Coh(G_t -> t) is two periodic}) and by the proof of \lemref{Coh(G_t)/Coh(G_t-->t) is generated by complexes concentrated in two degrees}, we see that it suffices to show that for every object $A$ represented by an object $\bbE=(E,d,\{h_i\}_{i=1,2})$ of $\cohs\bigt{\scrO_{\uL},(\pi_{\uK},\pi_{\uK})}$ concentrated in two degrees, $A[1]$ can be represented by some object of $\cohs\bigt{\scrO_{\uL},(\pi_{\uK},\pi_{\uK})}$ \emph{concentrated in the same two degrees}.
  We consider the dg-functor
  $$
    \cohs \bigt{\scrO_{\uL},\pi_{\uK}}\to \cohs \bigt{\scrO_{\uL},(\pi_{\uK},\pi_{\uK})}
  $$
  $$
    (E,d,h) \mapsto (E,d,\{h,h\}).
  $$
  This is a strict model for the pushforward along the diagonal map $\delta:t\to G_t\simeq t\times_T t$. 
  As $a_1\circ \delta=\id_t$, this induces a dg-functor
  $$
    \Sing(t)\to \frac{\Coh(G_t)}{\Coh(G_t\xto{a_1}t)}.
  $$
  By \remref{h_1=h_2 if the complex is concentrated in two degrees}, every object $A$ as above is in the image of this functor. 
  Therefore, it suffices to compute $A[1]$ before applying the functor, i.e. in $\Sing(t)$. 
  Then \cite[Corollary 3.7]{pi22b} is exactly what we want.
\end{proof}

\begin{prop}\label{an useful localization sequence}
  The dg-functor $a_{1*}:\Coh(G_t)\to \Coh(t)$ induces an equivalence
  $$
    \frac{\Coh(G_t)}{\Coh(G_t\xto{a_1}t)}\xto{\sim}\Sing(t)
  $$
  in $\dgCat_S$
\end{prop}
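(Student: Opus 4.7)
The plan is to construct a quasi-inverse $\bar\delta_*: \Sing(t)\to Q:=\Coh(G_t)/\Coh(G_t\xto{a_1}t)$ via the diagonal $\delta: t\to G_t\simeq t\times_T t$. The strict model $(E,d,h)\mapsto (E,d,\{h,h\})$ recalled immediately before the statement defines $\delta_*: \Coh(t)\to \Coh(G_t)$. To see that this descends to the quotients, observe that for $P\in \Perf(t)$, base change gives $a_{1*}\delta_*P\simeq (a_1\circ\delta)_*P=P$, hence $\delta_*P\in \Coh(G_t\xto{a_1}t)$; so we obtain $\bar\delta_*: \Sing(t)\to Q$. The same identity yields $\bar a_{1*}\circ\bar\delta_*\simeq id_{\Sing(t)}$, so $\bar\delta_*$ is a section of $\bar a_{1*}$.

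Next, I would show that $\bar\delta_*$ is essentially surjective up to Karoubi closure. By \lemref{Coh(G_t)/Coh(G_t-->t) is generated by complexes concentrated in two degrees} (strengthened to degrees $[0,1]$ by the subsequent lemma), $Q$ is Karoubi generated by the images of objects $(E,d,\{h_1,h_2\})\in \Cohs(\scrO_\uL,(\pi_\uK,\pi_\uK))$ with $(E,d)$ concentrated in degrees $[0,1]$. By \remref{h_1=h_2 if the complex is concentrated in two degrees}, any such object satisfies $h_1=h_2$, hence lies in the essential image of $\bar\delta_*$. Thus $\bar\delta_*$ hits a Karoubi-generating family of $Q$.

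To conclude that $\bar a_{1*}$ is a Morita equivalence, it remains to verify $\bar\delta_*\circ\bar a_{1*}\simeq id_Q$. On a generator $y=\bar\delta_*(z)$, this is immediate: $\bar\delta_*\bar a_{1*}(y)\simeq \bar\delta_*(z)=y$, functorially in $z$. The main obstacle is promoting this pointwise equivalence on a Karoubi-generating family into a natural equivalence of endofunctors of $Q$. I expect this to be handled by constructing the relevant natural transformation globally---for instance, from the counit of the adjunction $(\delta^*,\delta_*)$ on $\Coh$, suitably descended to the quotient; or directly at the strict-model level by comparing $(E,d,\{h_1,h_2\})$ with $(E,d,\{h_1,h_1\})$, reducing via the two-periodicity of $Q$ (\lemref{Coh(G_t)/Coh(G_t -> t) is two periodic}) to the two-degree case in which $h_1=h_2$ automatically holds. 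Once the transformation is available globally, exactness of both functors combined with the Karoubi generation above forces it to be an equivalence throughout $Q$.
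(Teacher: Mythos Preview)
Your approach is essentially the paper's. The paper constructs the same section via the diagonal---phrased through the matrix-factorization equivalence $\uh\bigt{\Sing(t)}\simeq \uh\bigt{\MF(\scrO_\uL,\pi_\uK)}$, its functor $\CG:(E^0,E^1,d,h)\mapsto (E^0\xto{d}E^1,\{h,h\})$ is exactly your $\bar\delta_*$---verifies $\CF\circ\CG=\Id$, and then argues $\CG\circ\CF\simeq\Id$ using the same two-degree generation. The obstacle you flag is handled tersely in the paper by working at the level of homotopy categories with the matrix-factorization model: on objects concentrated in degrees $[0,1]$ the composite $\CG\circ\CF$ is \emph{strictly} the identity on representatives (since $h_1=h_2$ there and the passage to $\MF$ is tautological in that range), so the identity natural transformation already does the job on the generating triangulated subcategory, and exactness together with idempotent completion extends it to all of $\fQ$.
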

\begin{proof}
  The dg-functor 
  $$
    \ffQ:=\frac{\Coh(G_t)}{\Coh(G_t\xto{a_1}t)}\to\Sing(t)
  $$
  is induced by the dg-functor of strict models
  $$
    a_{1*}:\cohs \bigt{\scrO_{\uL},(\pi_{\uK},\pi_{\uK})}\to \cohs (\scrO_{\uL},\pi_{\uK})
  $$
  $$
    (E,d,\{h_i\}_{i=1,2})\mapsto (E,d,h_1).
  $$
  Moreover, as the dg-categories involved are triangulated (see \cite{to11}), it suffices to show that it induces an equivalence at the level of homotopy categories:
  $$
    \ccF: \uh(\ffQ)\to \uh \bigt{\Sing(t)}.
  $$
  Let $\widehat{\ffQ}$ and $\widehat{\Sing(t)}$ denote the dg-quotients of $\Coh(G_t\xto{a_1})\hto \Coh(G_t)$ and $\Perf(t)\hto \Coh(t)$ computed in $\dgCats_S[\on{W}_{\on{qe}}^{-1}]$. 
  In particular, $\ffQ$ and $\Sing(t)$ are the triangulated hulls of $\widehat{\ffQ}$ and $\widehat{\Sing(t)}$ respectively.
  Recall that the homotopy category of $\widehat{\Sing(t)}$ has the following explicit description:
  $$
    \uh\bigt{\widehat{\Sing(t)}}\simeq \uh \bigt{\MF(\scrO_{\uL},\pi_{\uK})},
  $$
  where $\uh \bigt{\MF(\scrO_{\uL},\pi_{\uK})}$ denotes the triangulated category of \emph{matrix factorizations} (see \cite{orl04}). 
  This is the category whose objects are tuplets $\bbE=\bigt{E^0,E^1,d:E^0\to E^1,h:E^1\to E^0}$, where $E^0,E^1$ are projective $\scrO_{\uL}$-modules of finite rank and $d,h$ are $\scrO_{\uL}$-linear maps such that $d\circ h=\pi_{\uK} \cdot \id_{E_1}$ and $h\circ d=\pi_{\uK} \cdot \id_{E_0}$. 
  For two such objects $\bbE,\bbE'$, the $\scrO_{\uK}$-module of morphisms $\bbE\to \bbE'$ is the set of pairs of $\scrO_{\uL}$-linear maps $\phi=(\phi^0:E^0\to E^{'0},\phi^1:E^1\to E^{'1})$ commuting with $d,d',h,h'$ in the obvious sense, endowed with the obvious $\scrO_{\uK}$-module structure. 
  The shift functor is
  $$
    [1]:\uh \bigt{\MF(\scrO_{\uL},\pi_{\uK})} \to \uh \bigt{\MF(\scrO_{\uL},\pi_{\uK})}
  $$
  $$
    \bigt{E^0,E^1,d:E^0\to E^1,h:E^1\to E^0}\mapsto \bigt{E^1,E^0,-h:E^1\to E^0,-d:E^0\to E^1}
  $$
  and the cone of a morphism $\phi:\bbE \to \bbE'$ as above is
  $$
    \coFib(\phi)=\biggl (E^1\oplus E^{'0},E^0\oplus E^{'1}, \begin{bmatrix}
      \phi^1 & d' 
      \\
      -h & 0
    \end{bmatrix},\begin{bmatrix}
      0 & -d 
      \\
      h' & \phi^0
    \end{bmatrix}
    \biggr).
  $$
  The distinguished triangles are those isomorphic to triangles of the form
  $$
    \bbE \xto{\phi} \bbE'\to \coFib(\phi).
  $$
  The equivalence $\uh\bigt{\widehat{\Sing(t)}}\simeq \uh \bigt{\MF(\scrO_{\uL},\pi_{\uK})}$ is induced by the functor 
  $$
  (E,d,h)
  \mapsto 
  \Bigt{\bigoplus_{i\in \bbZ}E^{2i},\bigoplus_{i\in \bbZ}E^{2i+1},d+h,d+h}.
  $$
  See \cite[Corollary 3.11]{pi22b}.
  Therefore, we get a triangulated functor
  $$
    \ccG: \uh \bigt{\MF(\scrO_{\uL},\pi_{\uK})}\to \uh(\widehat{\ffQ})
  $$
  $$
    \bbE=(E^0,E^1,d,h)\mapsto \ccG(\bbE)=(E^0\xto{d} E^1,\{h,h\})
  $$
  $$
    (\phi^0,\phi^1)\mapsto (\phi^0,\phi^1),
  $$
  where the object $\ccG(\bbE)$ is concentrated in degrees $[0,1]$.
  The composition $\widehat{\ccF} \circ \ccG$ is then the identity functor. 
  Here $\widehat{\ccF}$ denotes the dg-functor
  $$
    \uh (\widehat{\ffQ})
    \to
    \uh \bigt{\MF(\scrO_{\uL},\pi_{\uK})}
  $$
  $$
    (E,d,\{h_i\}_{i=1,2})
    \mapsto
    \Bigt{\bigoplus_{i\in \bbZ}E^{2i},\bigoplus_{i\in \bbZ}E^{2i+1},d+h_1,d+h_1}.
  $$
  The composition $\ccG \circ \widehat{\ccF}$ is also equivalent to the identity, as it is so on those objects concentrated in degrees $[0,1]$, to which every object in $\uh (\widehat{\ffQ})$ is isomorphic.
  Thus, we have proved that
  $$
    \uh(\widehat{\ffQ}) \simeq \uh\bigt{\MF(\scrO_{\uL},\pi_{\uK})}
  $$
  and the claim of the proposition follows immediately.
\end{proof}

\begin{cor}
  The following are localization sequences in $\dgCat_S$:
  $$
    \Coh(G_t\xto{a_1}t)\hto \Coh(G_t) \to \Sing(t),
  $$
  $$
    \Sing(G_t\xto{a_1}t)\hto \Sing(G_t) \to \Sing(t).
  $$
\end{cor}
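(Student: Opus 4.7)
The plan is to deduce both localization sequences directly by combining \propref{an useful localization sequence} with \remref{Coh(G_t-->t)-->Coh(G_t)-->Sing(t) loc seq iff Sing(G_t-->t)-->Sing(G_t)-->Sing(t)}. Essentially all the technical work has already been carried out; what remains is just to match the statements with the definition of a localization sequence given in the preliminaries.

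For the first sequence $\Coh(G_t\xto{a_1}t)\hto \Coh(G_t)\to \Sing(t)$, I would verify the three conditions that define a localization sequence in $\dgCat_S$. The functor $\Coh(G_t)\to \Sing(t)$ is the composite of $a_{1*}\colon \Coh(G_t)\to \Coh(t)$ with the quotient $\Coh(t)\tto \Sing(t)$. Condition one, that the composition $\Coh(G_t\xto{a_1}t)\to \Coh(G_t)\to \Sing(t)$ is nullhomotopic, is immediate from the very definition of $\Coh(G_t\xto{a_1}t)$ as the full sub-dg category spanned by objects whose image along $a_{1*}$ lies in $\Perf(t)$. Condition two, that $\Coh(G_t\xto{a_1}t)$ is the kernel of $\Coh(G_t)\to \Sing(t)$, is again the definition. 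Condition three, that the induced dg functor on the quotient $\Coh(G_t)/\Coh(G_t\xto{a_1}t)\to \Sing(t)$ is a Morita equivalence, is precisely the content of \propref{an useful localization sequence}.

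For the second sequence $\Sing(G_t\xto{a_1}t)\hto \Sing(G_t)\to \Sing(t)$, \remref{Coh(G_t-->t)-->Coh(G_t)-->Sing(t) loc seq iff Sing(G_t-->t)-->Sing(G_t)-->Sing(t)} already records that the first sequence being a localization sequence is equivalent to the second one being so. Hence the second claim follows formally from the first. There is no genuine obstacle, since the nontrivial identifications of the quotient with $\Sing(t)$ via matrix factorizations were the content of the preceding proposition.
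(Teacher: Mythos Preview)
Your proof is correct and essentially identical to the paper's own argument: both invoke \propref{an useful localization sequence} for the Morita equivalence on the quotient, note that $\Coh(G_t\xto{a_1}t)$ is by definition the kernel of $\Coh(G_t)\to\Sing(t)$, and use \remref{Coh(G_t-->t)-->Coh(G_t)-->Sing(t) loc seq iff Sing(G_t-->t)-->Sing(G_t)-->Sing(t)} to pass between the $\Coh$ and $\Sing$ versions. The only cosmetic difference is that you spell out the three conditions of a localization sequence explicitly, whereas the paper leaves this implicit.
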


\begin{proof}
  By \remref{Coh(G_t-->t)-->Coh(G_t)-->Sing(t) loc seq iff Sing(G_t-->t)-->Sing(G_t)-->Sing(t)}, it suffices to show that the first one is a localization sequence. 
  This follows from \propref{an useful localization sequence} and from the observation that $\Coh(G_t\xto{a_1}t)$ is by definition the kernel of the dg-functor
  $$
    \Coh(G_t)\to \Sing(t)
  $$
induced by $a_{1*}$.
\end{proof}

\ssec{The structure of left \texorpdfstring{$\sB^+$}{B+}-modules} 

In this section, we show that the above localization sequences are compatible with the natural $\sB^+$-module structures.

\sssec{}

Recall from \cite{tv22} that $\Coh(G_t)$ and $\Coh(t)$ are both equipped with natural left actions of $\sB^+$. 
Since these actions preserve the full subcategories of perfect complexes, the quotient dg-categories $\Sing(G_t)$ and $\Sing(t)$ are left $\sB^+$-modules, too.

\sssec{}

Let $\cohs \bigt{\scrO_{\uK},(\pi_{\uK},\pi_{\uK})}$ be the strict model for $\Coh(G)$, defined (\emph{mutatis mutandis}) just as $\cohs \bigt{\scrO_{\uL},(\pi_{\uK},\pi_{\uK})}$. 
There is a pseudo-action
$$
  - \odot - : \cohs \bigt{\scrO_{\uK},(\pi_{\uK},\pi_{\uK})}\otimes \cohs \bigt{\scrO_{\uL},(\pi_{\uK},\pi_{\uK})} \to \cohs \bigt{\scrO_{\uL},(\pi_{\uK},\pi_{\uK})}
$$
$$
  (M,E)\mapsto M\odot E:= M\otimes_{\uK(\scrO_{\uK},\pi_{\uK})} E,
$$
where $M$ and $E$ are seen as a $\uK (\scrO_{\uK},\pi_{\uK})$-module by forgetting the actions of $h_1$ and $h_2$ respectively and by restricting scalars on $E$. 
Similarly, there is a pseudo-action
$$
  - \odot - : \cohs \bigt{\scrO_{\uK},(\pi_{\uK},\pi_{\uK})}\otimes \cohs (\scrO_{\uL},\pi_{\uK}) \to \cohs (\scrO_{\uL},\pi_{\uK})
$$
$$
  (M,E)\mapsto M\odot E:= M\otimes_{\uK (\scrO_{\uK},\pi_{\uK})} E
$$
where $M$ is seen as a $\uK (\scrO_{\uK},\pi_{\uK})$-module by forgetting the action of $h_1$ and by restring scalars on $E$.

These are strict models for the left $\sB^+$-module structures on $\Coh(G_t)$ and $\Coh(t)$, see \cite[Section 4.1]{tv22}.

\begin{lem}
  The dg-functor
  $$
    a_{1*}:\Coh(G_t)\to \Coh(t)
  $$
  is a morphism of left $\sB^+$-modules.
\end{lem}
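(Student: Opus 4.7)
The plan is to verify the compatibility strictly, at the level of the explicit models introduced above.

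First, I would recall that the pseudo-actions $M \odot E$ on $\Cohs(\scrO_\uL,(\pi_\uK,\pi_\uK))$ and on $\Cohs(\scrO_\uL,\pi_\uK)$ are both formed as the tensor product $M \otimes_{\uK(\scrO_\uK,\pi_\uK)} E$; in both cases, $M$ is regarded as a $\uK(\scrO_\uK,\pi_\uK)$-module via its variable $h_2^M$ (forgetting $h_1^M$), while the second factor is regarded as a $\uK(\scrO_\uK,\pi_\uK)$-module via its variable $h_1^E$ in the first case and via its unique variable in the second case. After the tensor product, the resulting $\uK(\scrO_\uL,(\pi_\uK,\pi_\uK))$-module structure on $M \odot E$ retains $h_1 = h_1^M$ and $h_2 = h_2^E$; similarly, on $\Cohs(\scrO_\uL,\pi_\uK)$ one retains $h = h_1^M$.

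The key observation is that, on strict models, $a_{1*}$ acts by forgetting $h_2$. Since the $h_2$-component of $M \odot E$ comes entirely from $h_2^E$ on the second factor, the operation \emph{take tensor product then forget $h_2$} agrees strictly with \emph{forget $h_2^E$ first, then take tensor product over $\uK(\scrO_\uK,\pi_\uK)$}. Concretely, both $a_{1*}(M \odot E)$ and $M \odot a_{1*}(E)$ have the same underlying $\scrO_\uL$-complex $M \otimes_{\uK(\scrO_\uK,\pi_\uK)} E$, with the same identification of $h_2^M$ and $h_1^E$, and the same residual $h$-action $h_1^M$. This gives a strict equality $a_{1*}(M \odot E) = M \odot a_{1*}(E)$, manifestly natural in both variables.

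The remaining, purely formal step is to promote this strict identity into a morphism of left $\CB^+$-modules in $\dgCat_S$. The associativity and unit coherences reduce tautologically to the associativity and unitality of $\otimes_{\uK(\scrO_\uK,\pi_\uK)}$, since the various forgetful operations involved affect mutually orthogonal formal variables; these coherences then pass verbatim through the Morita localization along quasi-isomorphisms, using the presentation of the $\CB^+$-action functors given in \cite[Section 4.1]{tv22}. The main obstacle is therefore purely notational, namely book-keeping of which variable $h_i$ carries which role; geometrically the lemma records the elementary fact that $a_1 = \mathrm{pr}_1 : t \times_T t \to t$ is compatible with left $\CB^+$-convolution, which operates only on the first factor of $t \times_T t$.
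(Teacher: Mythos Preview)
Your proof is correct and follows the same approach as the paper: both verify the compatibility at the level of strict models, using that $a_{1*}$ simply forgets the variable $h_2$, which does not interfere with the convolution action (since the latter only touches $h_1$ on the second factor). The paper's proof is a single sentence to this effect, whereas you have unpacked the bookkeeping of which $h_i$ plays which role and added a remark about passing the strict identity through the Morita localization; this is more explicit but not a different argument.
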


\begin{proof}
  This follows immediately from the strict models. 
  In fact, as already mentioned above, $a_{1*}:\Coh(G_t)\to \Coh(t)$ can be \virg{strictified} by the dg-functor
  $$
    \cohs \bigt{\scrO_{\uL},(\pi_{\uK},\pi_{\uK})}\to \cohs (\scrO_{\uL},\pi_{\uK})
  $$
  which forgets the action of $h_2$. 
  This is obviously compatible with the pseudo-actions described above.
\end{proof}

\begin{cor}\label{useful loc seq}
  The sequence
  $$
    \Coh(G_t\xto{a_1}t)\hto \Coh(G_t)\to \Sing(t)
  $$
  is a localization sequence of left $\sB^+$-modules.
\end{cor}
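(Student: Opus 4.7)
The preceding corollary has already established that the given sequence is a localization sequence in $\dgCat_S$, so the remaining task is to upgrade this to a statement in the category $\dgCat_{\CB^+}$ of left $\CB^+$-modules. My plan is therefore to verify that every object in the sequence carries a canonical left $\CB^+$-action and that every arrow is $\CB^+$-linear, and then to argue that these additional structures promote the forgetful localization sequence to one in $\dgCat_{\CB^+}$.

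First, I would assemble the $\CB^+$-module structures. The left $\CB^+$-module structure on $\Coh(G_t)$ and on $\Coh(t)$ is provided by the preceding discussion (after \cite{tv22}), and the preceding remark observes that both actions preserve perfect complexes, so they descend along the quotient $\Coh(t) \tto \Sing(t)$ to a left $\CB^+$-module structure on $\Sing(t)$ making the quotient functor $\CB^+$-linear. Next, by the lemma immediately preceding this corollary, the dg functor $a_{1*} : \Coh(G_t) \to \Coh(t)$ is $\CB^+$-linear. Composing with $\Coh(t) \to \Sing(t)$, the combined dg functor $\Coh(G_t) \to \Sing(t)$ is $\CB^+$-linear. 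Its kernel, which by definition is $\Coh(G_t \xto{a_1} t)$, is therefore stable under the $\CB^+$-action and inherits a canonical left $\CB^+$-module structure for which the inclusion $\Coh(G_t \xto{a_1} t) \hto \Coh(G_t)$ is $\CB^+$-linear.

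It remains to argue that this enhanced sequence is a localization sequence in $\dgCat_{\CB^+}$. The forgetful functor $\dgCat_{\CB^+} \to \dgCat_S$ is conservative and preserves both kernels and dg quotients (these are computed on underlying dg categories, with the $\CB^+$-action transported from the relevant universal properties). Consequently, the three conditions defining a localization sequence in $\dgCat_{\CB^+}$, namely the vanishing of the composition, the identification of $\Coh(G_t \xto{a_1} t)$ with the kernel, and the Morita equivalence $\Coh(G_t)/\Coh(G_t \xto{a_1} t) \xto{\sim} \Sing(t)$ supplied by \propref{an useful localization sequence}, can be checked after forgetting the module structure, where they have already been proved.

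The one genuinely non-formal input here is the preceding lemma establishing the $\CB^+$-linearity of $a_{1*}$ via its strict model, together with the observation that the quotient $\Coh(t) \to \Sing(t)$ is $\CB^+$-linear. Once these are in hand, the corollary is a formal consequence of the conservativity of the forgetful functor $\dgCat_{\CB^+} \to \dgCat_S$, so I expect no substantive obstacle beyond carefully tracking that the Morita equivalence of \propref{an useful localization sequence} itself is $\CB^+$-linear, which follows because it is induced by $a_{1*}$ composed with a $\CB^+$-linear quotient.
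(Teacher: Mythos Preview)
Your proposal is correct and follows essentially the same approach as the paper: both argue that $a_{1*}$ and the quotient $\Coh(t)\to\Sing(t)$ are $\CB^+$-linear, so their composition is, whence the kernel $\Coh(G_t\xto{a_1}t)$ inherits a $\CB^+$-module structure making the inclusion $\CB^+$-linear. Your additional remark that the forgetful functor $\dgCat_{\CB^+}\to\dgCat_S$ is conservative and detects localization sequences makes explicit a step the paper leaves implicit, but the substance is the same.
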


\begin{proof}
  As $a_{1*}:\Coh(G_t)\to \Coh(t)$ and $\Coh(t)\to \Sing(t)$ are morphisms of left $\sB^+$-modules, their composition is $\sB^+$-linear too. 
  It remains to show that the inclusion $\Coh(G_t\xto{a_1}t)\hto \Coh(G_t)$ is $\sB^+$-linear. 
  Observe that as $a_{1*}:\Coh(G_t)\to \Coh(t)$ is $\sB^+$-linear and the action of $\sB^+$ on $\Coh(t)$ preserves $\Perf(t)\subset \Coh(t)$, the dg-category $\Coh(G_t\xto{a_1}t)$ inherits a left $\sB^+$-module structure from $\Coh(G_t)$. 
  It is then clear that $\Coh(G_t\xto{a_1}t)\subset \Coh(G_t)$ is $\sB^+$-linear.
\end{proof}

\section{A d\'evissage-like result}

In this section we compute the motivic realization of the dg-category $\Coh(X_t\xto{i_{X_T}} X_T)$.

\ssec{Another useful localization sequence} 

\sssec{}

Recall from \cite[Section 2.1]{tv22} and \cite[Section 4.4]{lu17} that there is a functor
$$
  \dgCat^{\sB^+}\times \dgCat_{\sB^+}\to \dgCat_S
$$
which sends a right $\sB^+$-module $\ccR$ and a left $\sB^+$-module $\ccL$ to
$$
  \ccR \otimes_{\sB^+}\ccL:=(\ccR \otimes_S \ccL)\otimes_{\sB^{+,\ue}}\sB^{+,\uL},
$$
where $\sB^{+,\ue}$ denotes the \virg{enveloping} algebra $\sB^{+,\rev}\otimes_S \sB^+$, and $\sB^{+,\uL}$ the dg-category $\sB^{+}$ endowed with its natural left $\sB^{+,\ue}$-module structure. 

\sssec{}

Also recall (see \cite[Remark 2.1.4, Proposition 4.1.7]{tv22}) that $\Coh(X_s)$ is cotensored over $\sB^+$, so that $\Coh(X_s)^{\op}$ is a right $\sB^+$-module. 
We can therefore consider the functor
$$
  \Coh(X_s)^{\op}\otimes_{\sB^+}-: \dgCat_{\sB^+}\to \dgCat_S.
$$

\begin{lem}\label{Coh(X_s)^op otimes_CB^+ - preserves loc seq}
The functor $\Coh(X_s)^{\op}\otimes_{\sB^+}-: \dgCat_{\sB^+}\to \dgCat_S$ sends localization sequences of left $\sB^+$-modules to localization sequences in $\dgCat_S$.
\end{lem}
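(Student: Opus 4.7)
My plan is to reduce the statement to two formal facts: that localization sequences of dg categories correspond to cofiber sequences in an ambient stable $\oo$-category, and that the relative tensor product over an $\bbE_1$-algebra preserves colimits in each variable in this stable setting.

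First, I would recall that via Ind-completion $\dgCat_S$ embeds fully faithfully into the stable presentable $\oo$-category of compactly generated $S$-linear stable presentable $\oo$-categories, with morphisms the colimit- and compact-object-preserving functors. Under this embedding, a sequence $T_1 \to T_2 \to T_3$ is a localization sequence in the sense recalled in the preliminaries precisely when its image is a cofiber sequence. This compatibility is classical (Schlichting, Blumberg--Gepner--Tabuada). Moreover, the forgetful functor $\dgCat_{\CB^+} \to \dgCat_S$ is conservative and preserves limits and colimits, so a localization sequence of left $\CB^+$-modules is exactly a cofiber sequence in $\dgCat_{\CB^+}$ whose underlying $\dgCat_S$-diagram is a localization sequence.

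Next, the relative tensor product $\CR \otimes_{\CB^+} \CL$ defined in \cite[Section 2.1]{tv22} and \cite[Section 4.4]{lu17} is the geometric realization of a two-sided bar construction; under Ind-completion it translates into the relative tensor product over the $\bbE_1$-algebra $\Ind(\CB^+)$ in the ambient stable presentable setting. There, the tensor product preserves colimits separately in each variable, as it is built out of the absolute tensor product of presentable stable $\oo$-categories, which is itself colimit-preserving. Hence the functor
\[
  \Coh(X_s)^{\op} \otimes_{\CB^+} - \colon \dgCat_{\CB^+} \to \dgCat_S
\]
preserves all colimits, and in particular carries cofiber sequences to cofiber sequences, that is, localization sequences to localization sequences.

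The main point where care is needed is the identification of localization sequences with cofiber sequences under Ind-completion, together with the compatibility between the relative tensor product of dg categories and the one computed in the presentable stable world. Once these compatibilities are granted, the argument reduces to the standard fact that colimit-preserving functors between stable $\oo$-categories preserve cofiber sequences.
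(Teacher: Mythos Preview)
Your argument rests on the assertion that $\dgCat_S$ (or its Ind-completion) embeds into a \emph{stable} $\oo$-category in which localization sequences become cofiber sequences, so that colimit-preservation alone suffices. This is where the proposal breaks down: the $\oo$-category of compactly generated $S$-linear presentable stable $\oo$-categories (with compact-preserving left adjoints) is \emph{not} stable, and neither is $\dgCat_S$. In these settings a localization sequence is indeed a cofiber diagram, but not every cofiber sequence is a localization sequence: the extra condition is that the first map be fully faithful (equivalently, that the sequence also be a fiber sequence). Since the relative tensor product preserves colimits but has no reason to preserve limits, colimit-preservation gives you only
\[
  \Coh(X_s)^{\op}\otimes_{\CB^+}\CL_3 \;\simeq\; \frac{\Coh(X_s)^{\op}\otimes_{\CB^+}\CL_2}{\text{image of }\Coh(X_s)^{\op}\otimes_{\CB^+}\CL_1},
\]
and says nothing about whether $\Coh(X_s)^{\op}\otimes_{\CB^+}\CL_1 \to \Coh(X_s)^{\op}\otimes_{\CB^+}\CL_2$ is fully faithful.

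This is exactly the point the paper addresses separately. After invoking \cite[Corollary 4.4.2.15]{lu17} for the cofiber identification (your step, essentially), the paper checks full faithfulness by hand: objects of $\Coh(X_s)^{\op}\otimes_{\CB^+}\CL_i$ are pairs $(E,L)$ and the hom complexes are computed as $\Hom_{\Coh(X_s)^{\op}}(E,E')\otimes_{k[u]}\Hom_{\CL_i}(L,L')$, where $k[u]$ is the endomorphism algebra of the unit of $\CB^+$. Since $\CL_1\hto\CL_2$ is fully faithful, the map on hom complexes is a quasi-isomorphism after applying $\Hom_{\Coh(X_s)^{\op}}(E,E')\otimes_{k[u]}-$. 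Your proposal needs an analogous argument (or a different one) to close this gap; the abstract colimit-preservation statement does not supply it.
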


\begin{proof}
  By \cite[Corollary 4.4.2.15]{lu17} (applied to $\ccC=\dgCat_S$, $A=C=\Perf(S)$ and $B=\sB^+$), we know that this functor preserves colimits. 
  In particular, if
  $$
    \ccL_1 \hto \ccL_2 \to \ccL_3
  $$
  is a localization sequence in $\dgCat_{\sB^+}$, then
  $$
\Coh(X_s)^{\op}\otimes_{\sB^+}\ccL_3\simeq \frac{\Coh(X_s)^{\op}\otimes_{\sB^+}\ccL_2}{\Coh(X_s)^{\op}\otimes_{\sB^+}\ccL_1}.
  $$
It remains to prove that the induced arrow $\Coh(X_s)^{\op}\otimes_{\sB^+}\ccL_1\to \Coh(X_s)^{\op}\otimes_{\sB^+}\ccL_2$ is fully-faithful. 
Objects in the tensor product $\Coh(X_s)^{\op}\otimes_{\sB^+}\ccL_1$ (resp. $\Coh(X_s)^{\op}\otimes_{\sB^+}\ccL_2$) are pairs $(E,L)$ where $E\in \Coh(X_s)$ and $L\in \ccL_1$ (resp. $L\in \ccL_2$). 
For two such objects $(E,L)$, $(E',L')$, the hom complex of morphisms from $(E,L)$ to $(E',L')$ in $\Coh(X_s)^{\op}\otimes_{\sB^+}\ccL_1$ (resp. $\Coh(X_s)^{\op}\otimes_{\sB^+}\ccL_2$) is computed as
  $$
    \Hom_{\Coh(X_s)^\op}(E,E')\otimes_{k[u]}\Hom_{\ccL_{1}}(L,L')
  $$
  $$
    \bigt{\text{resp. } \Hom_{\Coh(X_s)^\op}(E,E')\otimes_{k[u]}\Hom_{\ccL_{2}}(L,L')}.
  $$
  Here $k[u]$ is the algebra of endomorphisms of the unit object of $\sB^+$. 
  
  For two objects $L,L' \in \ccL_1$, the morphism
  $$
    \Hom_{\ccL_{1}}(L,L')\to \Hom_{\ccL_{2}}(L,L')
  $$
  is a quasi-isomorphism. 
  Therefore, its image along
  $$
    \Hom_{\Coh(X_s)^\op}(E,E')\otimes_{k[u]}-
  $$
  is a quasi-isomorphism as well and the claim follows.
\end{proof}

\begin{cor}\label{useful loc seq 2 preliminary}
  The sequence
  $$
    \Coh(X_s)^{\op}\otimes_{\sB^+}\Coh(G_t\xto{a_1}t)\hto \Coh(X_s)^{\op}\otimes_{\sB^+}\Coh(G_t)\to \Coh(X_s)^{\op}\otimes_{\sB^+}\Sing(t)
  $$
  is a localization sequence in $\dgCat_S$.
\end{cor}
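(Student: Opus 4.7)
The plan is to invoke the two previously established results in sequence. By \corref{useful loc seq}, the diagram
$$
\Coh(G_t\xto{a_1}t)\hto \Coh(G_t)\to \Sing(t)
$$
is a localization sequence \emph{in the category $\dgCat_{\CB^+}$ of left $\CB^+$-modules}, not merely in $\dgCat_S$. This is precisely the input format required by \lemref{Coh(X_s)^op otimes_CB^+ - preserves loc seq}, which asserts that the $\oo$-functor $\Coh(X_s)^{\op}\otimes_{\CB^+}-:\dgCat_{\CB^+}\to \dgCat_S$ carries localization sequences of left $\CB^+$-modules to localization sequences in $\dgCat_S$. Applying this functor to the above sequence therefore yields the claim immediately.

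In other words, the entire content of the corollary is packaged into the preceding lemma and corollary; there is no further work to perform. If any step could be considered the main point, it is simply checking that the hypotheses of \lemref{Coh(X_s)^op otimes_CB^+ - preserves loc seq} apply, i.e.\ that one is indeed given a localization sequence of $\CB^+$-modules rather than merely of dg categories over $S$, and this has been arranged by \corref{useful loc seq}.
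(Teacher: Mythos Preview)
Your proof is correct and matches the paper's own argument essentially verbatim: the paper simply says the corollary follows immediately from \lemref{Coh(X_s)^op otimes_CB^+ - preserves loc seq} and \corref{useful loc seq}, which is precisely what you invoke.
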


\begin{proof}
  This follows immediately from \lemref{Coh(X_s)^op otimes_CB^+ - preserves loc seq} and \corref{useful loc seq}.
\end{proof}

\sssec{}

Our next goal is to identify the localization sequence above with a more explicit one. 
Recall from \cite[Section 4.2]{tv22} that, for two regular and flat $S$-schemes $Y$ and $Z$, there is an equivalence
$$
  \ol \ffF_{Y,Z}:= j_*\bigt{\bbD_{Y_s}(-)\boxtimes_s(-)}: \Coh(Y_s)^\op \otimes_{\sB^+}\Coh(Z_s)\xto{\sim} \Coh(Y\times_S Z)_{Y_s\times_s Z_s},
$$
where $-\boxtimes_s-$ denotes the external tensor product over $s$, $\bbD_{Y_s}(-):=\ul \Hom_{Y_s}(-,\ccO_{Y_s})$ the Grothendieck duality functor and $\Coh(Y\times_S Z)_{Y_s\times_s Z_s}$ the subcategory of $\Coh(Y\times_S Z)$ spanned by those complexes supported on the closed subscheme $j:Y_s\times_s Z_s\hto Y\times_S Z$. 

\begin{rem}\label{hypotesis needed on ol fF equivalence}
  Actually, we observe that the proof of \cite[Lemma 4.2.3]{tv22} only requires that $Y$ and $Z$ are Gorenstein $S$-schemes of finite type. 
  The flatness assumption is never really used (it is actually there only to guarantee that the derived special fibers agree with the usual ones), while regularity is only needed to guarantee that this functor is compatible with perfect complexes and thus induces a functor at the level of dg-categories of singularities.
\end{rem}

\begin{prop}\label{useful loc seq 2}
The localization sequence
$$
  \Coh(X_s)^\op \otimes_{\sB^+} \Bigt{\Coh(G_t\xto{a_1}t)\hto \Coh(G_t)\to \Sing(t)}
$$
of \corref{useful loc seq 2 preliminary} identifies with
$$
  \Coh(X_t\xto{i_{X_T}} X_T)\hto \Coh(X_t)\xto{i_{X_T*}} \Sing(X_T).
$$
\end{prop}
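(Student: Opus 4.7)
The plan is to apply the Toën--Vezzosi equivalence $\ol\fF_{X,T}$ of \cite[Lemma 4.2.3]{tv22} — which is available by \remref{hypotesis needed on ol fF equivalence}, since $T$ is regular and hence Gorenstein — term by term to the localization sequence on the left, and then verify that the induced maps match the ones on the right. Once the middle and right terms are identified, the leftmost term will follow as the kernel, using \corref{useful loc seq 2 preliminary}.

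For the right-hand term, $\ol\fF_{X,T}$ furnishes an equivalence
$$\Coh(X_s)^{\op}\otimes_{\CB^+}\Coh(t)\xto{\sim}\Coh(X_T)_{X_t}$$
that intertwines $\Perf$. Passing to the quotient by $\Perf$ one obtains $\Coh(X_s)^{\op}\otimes_{\CB^+}\Sing(t)\simeq \Coh(X_T)_{X_t}/\Perf(X_T)_{X_t}$. To identify the latter with $\Sing(X_T)$, I would use the localization sequence $\Coh(X_T)_{X_t}\hookrightarrow\Coh(X_T)\to\Coh(X_\uL)$ together with smoothness of the generic fiber $X_\uL$ over $\Spec(\uL)$: since $\Coh(X_\uL)=\Perf(X_\uL)$, the quotient vanishes after passing to the singularity category, giving $\Sing(X_T)\simeq \Coh(X_T)_{X_t}/\Perf(X_T)_{X_t}$.

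The middle term $\Coh(X_s)^{\op}\otimes_{\CB^+}\Coh(G_t)\simeq\Coh(X_t)$ is the delicate point, and is where I expect the main obstacle. My plan is to exhibit a base-change identification
$$\Coh(G_t)\simeq \CB^+\otimes_{\Coh(s)}\Coh(t)$$
of left $\CB^+$-modules, reflecting the derived fiber product $G_t\simeq G\times_s t$; on strict models this is the identification $\uK(\scrO_\uL,(\pi_\uK,\pi_\uK))\simeq \uK(\scrO_\uK,(\pi_\uK,\pi_\uK))\otimes_{\scrO_\uK}\scrO_\uL$ already used in \secref{Explicit models}. Substituting and cancelling $\CB^+$ collapses the expression to
$$\Coh(X_s)^{\op}\otimes_{\Coh(s)}\Coh(t),$$
and a Grothendieck duality argument in the style of \cite[Lemma 4.2.3]{tv22} — run over the residue field $s$ rather than the base $S$ — identifies this with $\Coh(X_s\times_s t)=\Coh(X_t)$ via the functor $\bbD_{X_s}(-)\boxtimes_s(-)$.

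Finally, it remains to check that the morphism $\Coh(G_t)\xto{a_{1*}}\Coh(t)\to\Sing(t)$ becomes, under these identifications, the composite $\Coh(X_t)\xto{i_{X_T*}}\Coh(X_T)_{X_t}\to\Sing(X_T)$. On strict models, $a_{1*}$ is the forgetful functor discarding the $h_2$-action; under the Koszul resolution underlying $\ol\fF_{X,T}$, this is intertwined with the honest pushforward $i_{X_T*}$. The main obstacle is the careful bookkeeping to ensure that the Grothendieck duality twists appearing in the two distinct identifications (over $s$ for the middle term, over $S$ for the right) are mutually consistent across the morphism $a_{1*}$. Once this compatibility is settled, the identification of the leftmost term follows at once from \corref{useful loc seq 2 preliminary}, yielding the identification of the whole localization sequence.
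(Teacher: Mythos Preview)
Your treatment of the right-hand term matches the paper's exactly. The difference lies in the middle term, where you propose a K\"unneth-and-cancellation route ($\Coh(G_t)\simeq\CB^+\otimes_{\Coh(s)}\Coh(t)$, then collapse, then a duality argument over $s$). The paper instead applies the \emph{same} To\"en--Vezzosi equivalence $\ol\fF$ directly, but with second argument $t$ rather than $T$: since $t$ is Gorenstein (it is a local complete intersection), the very remark you cite already yields
$$
\ol\fF_{X,t}:\Coh(X_s)^{\op}\otimes_{\CB^+}\Coh(G_t)\xto{\sim}\Coh(X\times_S t)_{X_s\times_s G_t},
$$
and the support condition is vacuous because $G_t\to t$ has empty open complement, so the right side is simply $\Coh(X_t)$. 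This bypasses your K\"unneth step entirely.

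For the morphism compatibility, the paper likewise avoids strict models: it checks $i_{X_T*}\circ\ol\fF_{X,t}(E,M)\simeq\fF_{X,T}(E,a_{1*}M)$ by a short projection-formula and base-change computation along the diagram relating $X_s\times_s G_t$, $X_s\times_s t$, and $X_T$. Because both vertical identifications are instances of the same functor $\ol\fF$ (just with second schemes $t$ and $T$), the Grothendieck-duality twists you flag as the main obstacle are automatically aligned, and the verification reduces to a single six-functor manipulation. Your route is not incorrect in spirit, but the K\"unneth identification $\Coh(G_t)\simeq\CB^+\otimes_{\Coh(s)}\Coh(t)$ as left $\CB^+$-modules would itself need justification, and the strict-model check of the morphism is more laborious than the argument the paper actually gives.
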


\begin{proof}
We will proceed in steps.

\sssec*{Step 1}

By \remref{hypotesis needed on ol fF equivalence}, we have an equivalence
  $$
    \ol \ffF_{X,t}:\Coh(X_s)^\op \otimes_{\sB^+} \Coh(G_t)\xto{\sim} \Coh(X\times_St)_{X_s\times_s G_t}.
  $$
However, $X_s\times_s G_t\hto X\times_S t\simeq X_t$ is a closed embedding with empty open complement (as $G_t\xto{a_1}t$ is so). 
In particular,
  $$
    \Coh(X\times_St)_{X_s\times_s G_t}\simeq \Coh(X_t)
  $$
and thus we obtain an equivalence
$$
\ol \ffF_{X,t}:\Coh(X_s)^\op \otimes_{\sB^+} \Coh(G_t)\xto{\sim} \Coh(X_t).
$$

\sssec*{Step 2}
Starting from the localization sequence 
  $$
    \Perf(t)\hto \Coh(t)\to \Sing(t)
  $$
of left $\sB^+$-modules, \lemref{Coh(X_s)^op otimes_CB^+ - preserves loc seq} implies that
  $$
    \Coh(X_s)^\op\otimes_{\sB^+}\Perf(t)\to \Coh(X_s)^\op\otimes_{\sB^+}\Coh(t)\to \Coh(X_s)^\op\otimes_{\sB^+}\Sing(t)
  $$
is a localization sequence in $\dgCat_S$. 
As explained in the proofs of \cite[Lemma 4.2.3, Theorem 4.2.1]{tv22}, the functor $\ol \ffF_{X,T}$ yields a commutative diagram
\begin{equation*}
  \begin{tikzcd}[column sep=huge, row sep=large]
    \Coh(X_s)^\op \otimes_{\sB^+}\Perf(t)
       \arrow[r]
       \arrow[d,"\ol \ffF_{X,t}"]
    &
      \Coh(X_s)^\op \otimes_{\sB^+}\Coh(t)
        \arrow[d,"\ffF_{X,T}"]
    \\
      \Perf(X_T)_{X_t}
        \arrow[r]
    &
      \Coh(X_T)_{X_t},
  \end{tikzcd}
\end{equation*}
where the vertical arrows are equivalences. 
Therefore,
  $$
    \Coh(X_s)^\op \otimes_{\sB^+}\Sing(t)\simeq \frac{\Coh(X_T)_{X_t}}{\Perf(X_T)_{X_t}}\simeq \Sing(X_T).
  $$
Here we used that $X_T$ has smooth generic fiber, so that
  $$
    \frac{\Coh(X_T)_{X_t}}{\Perf(X_T)_{X_t}}\simeq \frac{\Coh(X_T)}{\Perf(X_T)}=\Sing(X_T).
  $$
Thus, $\ol\ffF_{X,T}$ induces an equivalence
$$
\ffF_{X,T}:\Coh(X_s)^\op \otimes_{\sB^+}\Sing(t)
\xto{ \simeq } 
\Sing(X_T).
$$

\sssec*{Step 3}

Consider now the square
\begin{equation*}
  \begin{tikzcd}[column sep=huge, row sep=large]
     \Coh(X_s)^\op \otimes_{\sB^+}\Coh(G_t)
       \arrow[r,"\id\otimes_{\sB^+}a_{1*}"]
       \arrow[d,"\ol \ffF_{X,t}"]
    &
      \Coh(X_s)^\op \otimes_{\sB^+}\Sing(t)
        \arrow[d,"\ffF_{X,T}"]
    \\
      \Coh(X_t)
        \arrow[r,"i_{X_T*}"]
    &
      \Sing(X_T),
  \end{tikzcd}
\end{equation*}
and recall from the two previous steps that both vertical arrows are equivalences. In this step, we prove that this square is commutative.
So, we need to show that dg-functor $\Coh(X_t)\to \Sing(X_T)$ corresponding to 
$$
  \Coh(X_s)^\op \otimes_{\sB^+}\bigt{\Coh(G_t)\to \Sing(t)}
$$ 
identifies with the pushforward along $i_{X_T}:X_t\hto X_T$ composed with the projection 
$$
  \Coh(X_T) \tto \Sing(X_T).
$$
For this, we will use the diagram
\begin{equation*}
  \begin{tikzcd}[column sep=huge, row sep=large]
      X\times_Ss
      &
      X\times_Ss
      \\
        \arrow[u,"\pr_{12}"]
      X\times_Ss\times_Ss\times_S T
        \arrow[r,"\pr_{134}"]
        \arrow[d,"\pr_{234}"]
      &
        \arrow[u,"\tilde{\pr}_{12}"]
      X\times_Ss\times_S T
        \arrow[r,"i_{X_T}=\tilde{\pr}_{13}"]
        \arrow[d,"\tilde{\pr}_{23}"]
      &
      X\times_S T
        \arrow[d,"\pr_2"]
      \\
      s\times_Ss\times_S T
        \arrow[r,"a_1=\hat{\pr}_{13}"]
      &
      s\times_S T
        \arrow[r,"i_T"]
      &
      T
  \end{tikzcd}
\end{equation*}
Recall that $\ol \ffF_{X,T}$ is defined as
$$
(\tilde{p}_{13})_*\bigt{\tilde{\pr}_{12}^*\bbD_{X_s}(-)\otimes \tilde{\pr}_{23}^*(-)}
  :
\Coh(X\times_Ss)^{\op}\otimes_{\sB^+}\Coh(s\times_ST)
  \to
  \Coh(X\times_ST)_{X\times_Ss\times_ST}
$$
and that $\ol \ffF_{X,s\times_ST}$ is defined as
$$
  (\pr_{134})_*\bigt{\pr_{12}^*\bbD_{X_s}(-)\otimes \pr_{234}^*(-)}
  :
\Coh(X\times_Ss)^{\op}\otimes_{\sB^+}\Coh(s\times_Ss\times_ST)
  \to
  \Coh(X\times_Ss\times_ST).
$$
Let $E\in \Coh(X_s)$ and $M\in \Coh(G_t)$. To identify $\ol \ffF_{X,T}(E,a_{1*}M)$ with $i_{X_T*} \ol \ffF_{X,t}(E,M)$, we compute:
\begin{eqnarray}
\nonumber
  i_{X_T*} \ol \ffF_{X,s\times_ST}(E,M) & = & (\tilde{\pr}_{13})_*(\pr_{134})_*\bigt{\pr_{12}^*\bbD_{X_s}(E)\otimes \pr_{234}^*M} 
  \\
  \nonumber
  & \simeq & (\tilde{\pr}_{13})_*(\pr_{124})_*\bigt{\pr_{124}^*\tilde{\pr}_{12}^*\bbD_{X_s}(E)\otimes \pr_{234}^*M}  \\
  \nonumber
  & \simeq & (\tilde{\pr}_{13})_*\bigt{\tilde{\pr}_{12}^*\bbD_{X_s}(E)\otimes (\pr_{124})_*\pr_{234}^*M}
  \\
  \nonumber
  & \simeq & (\tilde{\pr}_{13})_*\bigt{\tilde{\pr}_{12}^*\bbD_{X_s}(E)\otimes \tilde{\pr}_{23}^*(\hat{\pr}_{13})_*M}
  \\
  \nonumber
  & = & \ffF_{X,T}(E,a_{1*}M) \hspace{0.5cm}\text{(when considered as an object in $\Sing(X_T)$)}.
\end{eqnarray}
The first equivalence follows from the obvious identities
$$
\tilde{\pr}_{13}\circ \pr_{124}\simeq \tilde{\pr}_{13}\circ \pr_{134}
$$
$$
\pr_{12}\simeq \tilde{\pr}_{12}\circ \pr_{124},
$$
the second equivalence is the projection formula and the last equivalence from the base-change
$$
\tilde{\pr}_{23}^*(\hat{\pr}_{13})_*\simeq (\pr_{124})_*\pr_{234}^*.
$$
This shows that the diagram of dg-functors
\begin{equation*}
  \begin{tikzcd}[column sep=huge, row sep=large]
    \Coh(X\times_Ss)^\op \otimes_{\sB^+}\Coh(s\times_Ss\times_ST)
      \arrow[r,"\id \otimes a_{1*}"]
      \arrow[d,"\ol \ffF_{X,t}"]
    &
    \Coh(X\times_Ss)^\op \otimes_{\sB^+}\Coh(s\times_ST)
      \arrow[d,"\ol \ffF_{X,T}"]
    \\
    \Coh(X\times_Ss\times_ST)
      \arrow[r,"i_{X_T*}"]
    &
    \Coh(X\times_ST)_{X\times_Ss\times_ST}
  \end{tikzcd}
\end{equation*}
is commutative. As an immediate consequence, we obtain
$$
  \Coh(X_s)^\op \otimes_{\sB^+}\bigt{\Coh(G_t)\to \Sing(t)}
  \simeq
  \bigt{\Coh(X_t)\xto{i_{X_T*}}\Coh(X_T)\tto \Sing(X_T)}.
$$

\sssec*{Step 4} %

We can now conclude the proof. The commutativity of the above square implies that
  $$
    \Coh(X_s)^\op \otimes_{\sB^+} \Coh(G_t\xto{a_1}t)\simeq \Coh(X_t\xto{i_{X_T}} X_T)
  $$
as the left-hand side has to be equivalent to
  $$
    \Ker \bigt{\Coh(X_t)\xto{i_{X_T*}}\Sing(X_T)}=\Coh(X_t\xto{i_{X_T}}X_T
    .
  $$
\end{proof}

\ssec{Motivic realization of \texorpdfstring{$\Coh(X_t\xto{i} X_T)$}{Coh(Xt to XT)}} 

\sssec{}

The motivic spectrum underlying $\Mv_S\bigt{\Coh(X_t\xto{i_{X_T}}X)}$ is easy to describe: it is the functor
$$
  \Sm_S^\op \to \Sp
$$
that sends a smooth $S$-scheme $Y$ to $\HK\bigt{\Coh(X_t\xto{i_{X_T}}X)\otimes_{\Perf(S)}\Perf(Y)}$.

\sssec{}
 
We think of the following statement as a kind of \emph{d\'evissage for homotopy-invariant non-connective algebraic $\uK$-theory}. 
We therefore state it as a theorem, as it seems to be a new result interesting on its own.
\begin{thm}\label{devissage-like thm}
With the same notation as in the previous sections, there are equivalences
$$
  \Mv_S\bigt{\Coh(X_t\xto{i_{X_T}}X_T)}\simeq \Mv_S\bigt{\Perf(X_T)_{X_t}}\simeq q_{T*}i_{X_T*}i_{X_T}^!\BU_{X_T}
$$
in $\SH_S$, where $q_T:X_T\to S$ is the composition $X_T\xto{p_T} T \to S$.
\end{thm}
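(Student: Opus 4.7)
The plan is to prove the two stated equivalences separately. For the first, I will combine two fiber sequences arising from localization sequences of dg categories with a d\'evissage-type comparison; for the second, I will use a standard Thomason--Trobaugh localization sequence together with the recollement triangle in $\SH$.

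For the first equivalence, I begin by applying $\Mv_S$ to the localization sequence
$$
\Coh(X_t \xto{i_{X_T}} X_T) \hto \Coh(X_t) \longto \Sing(X_T)
$$
furnished by \propref{useful loc seq 2}. Since $\Mv_S$ sends localization sequences to fiber--cofiber sequences, this realizes $\Mv_S\bigt{\Coh(X_t \to X_T)}$ as the fiber of the induced arrow $\Mv_S(\Coh(X_t)) \to \Mv_S(\Sing(X_T))$. Separately, as already observed in the proof of \propref{useful loc seq 2}, the fact that $X_T$ has smooth generic fiber yields a second localization sequence
$$
\Perf(X_T)_{X_t} \hto \Coh(X_T)_{X_t} \longto \Sing(X_T),
$$
which, after applying $\Mv_S$, identifies $\Mv_S(\Perf(X_T)_{X_t})$ with the fiber of $\Mv_S(\Coh(X_T)_{X_t}) \to \Mv_S(\Sing(X_T))$. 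The pushforward $i_{X_T*}: \Coh(X_t) \to \Coh(X_T)_{X_t}$ fits these two localization sequences into a commutative ladder whose rightmost vertical arrow is the identity of $\Sing(X_T)$. Hence, by comparing fibers, the first equivalence reduces to the claim that $i_{X_T*}$ induces an equivalence $\Mv_S(\Coh(X_t)) \xto{\sim} \Mv_S(\Coh(X_T)_{X_t})$.

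This d\'evissage statement is the main obstacle. My plan is to establish it as follows. Let $X_T^{(n)}$ denote the closed subscheme of $X_T$ cut out by $\pi_\uL^n$, so that $X_t = X_T^{(\ue)}$ and $\Coh(X_T)_{X_t} \simeq \colim_n \Coh(X_T^{(n)})$ in $\dgCat_S$. Since $\Mv_S$ preserves filtered colimits, it suffices to show that the pushforward $\Coh(X_t) \to \Coh(X_T^{(n)})$ is an equivalence on $\Mv_S$ for every $n \ge \ue$. This is an avatar, at the level of homotopy-invariant non-connective K-theory, of Quillen's d\'evissage theorem: the $\pi_\uL$-adic filtration of any object of $\Coh(X_T^{(n)})$ has successive subquotients lying in the essential image of $\Coh(X_t)$, and additivity in K-theory propagates the equivalence layer by layer. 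Justifying this passage from abelian d\'evissage to d\'evissage at the level of $\Mv_S$ --- checking the compatibility with the dg enhancement and that the sequences induced by the $\pi_\uL$-adic filtration are genuine cofiber sequences in $\dgCat_S$ --- is the core technical input.

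For the second equivalence, I invoke the Thomason--Trobaugh localization sequence
$$
\Perf(X_T)_{X_t} \hto \Perf(X_T) \longto \Perf(X_\uL),
$$
where $j_{X_T}: X_\uL \hto X_T$ is the open complement of $i_{X_T}$. Applying $\Mv_S$ together with the standard identification $\Mv_S(\Perf(Y)) \simeq q_*\BU_Y$ for qcqs finite-type $S$-schemes $q:Y \to S$ (recalled in \secref{ssec: motivic and l-adic realizations of dg categories}), this yields
$$
\Mv_S(\Perf(X_T)_{X_t}) \simeq \Fib\bigt{q_{T*}\BU_{X_T} \longto q_{\uL *}\BU_{X_\uL}}.
$$
Finally, the recollement triangle in $\SH_{X_T}$ attached to the closed/open decomposition $(i_{X_T}, j_{X_T})$,
$$
i_{X_T*}i_{X_T}^!\BU_{X_T} \longto \BU_{X_T} \longto j_{X_T*}j_{X_T}^*\BU_{X_T},
$$
pushed forward by $q_{T*}$ (using $q_T \circ j_{X_T} = q_\uL$ and $j_{X_T}^*\BU_{X_T} \simeq \BU_{X_\uL}$), identifies this fiber with $q_{T*}i_{X_T*}i_{X_T}^!\BU_{X_T}$, completing the proof.
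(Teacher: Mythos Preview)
Your proof follows the same architecture as the paper's: the same commutative ladder of localization sequences coming from \propref{useful loc seq 2}, the same reduction of the first equivalence to showing that
\[
\Mv_S(i_{X_T*}):\Mv_S\bigl(\Coh(X_t)\bigr)\longrightarrow \Mv_S\bigl(\Coh(X_T)_{X_t}\bigr)
\]
is an equivalence, and the same Thomason--Trobaugh plus recollement argument for the second equivalence.

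The only substantive divergence is in how you justify the d\'evissage equivalence above, and this is where your argument is incomplete. The paper proceeds pointwise: for each smooth $Y\in\Sm_S$ it identifies $\Coh(X_t)\otimes_{\Perf(S)}\Perf(Y)\simeq \Coh(X_t\times_s Y_s)$ and $\Coh(X_T)_{X_t}\otimes_{\Perf(S)}\Perf(Y)\simeq \Coh(X_T\times_S Y)_{X_t\times_s Y_s}$ via \cite[Proposition~B.4.1]{pr11}, and then invokes the theorem of the heart together with Quillen's d\'evissage for the abelian hearts. This cleanly reduces the statement to classical results.

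Your proposed route via the $\pi_\uL$-adic filtration and ``additivity'' is in the right spirit but stalls exactly at the step you yourself flag as the ``core technical input.'' The difficulty is structural: $\Mv_S(T)$ is by definition the presheaf $Y\mapsto \HK\bigl(T\otimes_{\Perf(S)}\Perf(Y)\bigr)$, so to conclude an equivalence in $\SH_S$ you must control the map \emph{after} tensoring with $\Perf(Y)$ for every smooth $Y$. The $\pi_\uL$-adic filtration lives in the abelian heart and does not produce a filtration by localization sequences of dg categories; hence it does not automatically survive $-\otimes_{\Perf(S)}\Perf(Y)$ followed by $\HK$. (Note also that the pushforwards $\Coh(X_T^{(n)})\to\Coh(X_T^{(n+1)})$ are \emph{not} fully faithful on derived categories, so the colimit description requires its own justification.) To carry your approach through you would need either a geometric identification of those tensor products---which is precisely what Preygel's result provides in the paper's argument---or a d\'evissage theorem for $\HK$ of dg categories that is stable under tensoring, and no such statement is available off the shelf. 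In practice, completing your sketch forces you back onto the paper's path.
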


\begin{proof}
\propref{useful loc seq 2} immediately yields a commutative diagram 
\begin{equation*}
  \begin{tikzcd}[column sep=huge, row sep=large]
    \Coh(X_t\xto{i_{X_T}}X_T)
    \arrow[r]
    \arrow[d,"i_{X_T*}"]
    &
    \Coh(X_t)
    \arrow[r]
    \arrow[d,"i_{X_T*}"]
    &
    \Sing(X_T)
    \arrow[d,"\id"]
    \\
    \Perf(X_T)_{X_t}
    \arrow[r]
    &
    \Coh(X_T)_{X_t}
    \arrow[r]
    &
    \Sing(X_T),
  \end{tikzcd}
\end{equation*}
where the rows are localization sequences in $\dgCat_S$. 
As $\Mv_S$ sends localization sequences to fiber-cofiber sequences, we obtain a commutative diagram
\begin{equation}\label{diag: proof computation K theory relative perfect complexes}
  \begin{tikzcd}[column sep=huge, row sep=large]
    \Mv_S\bigt{\Coh(X_t\xto{i_{X_T}}X_T)}
    \arrow[r]
    \arrow[d,"\Mv_S\bigt{i_{X_T*}}"]
    &
    \Mv_S\bigt{\Coh(X_t)}
    \arrow[r]
    \arrow[d,"\Mv_S\bigt{i_{X_T*}}"]
    &
    \Mv_S\bigt{\Sing(X_T)}
    \arrow[d,"\id"]
    \\
    \Mv_S\bigt{\Perf(X_T)_{X_t}}
    \arrow[r]
    &
    \Mv_S\bigt{\Coh(X_T)_{X_t}}
    \arrow[r]
    &
    \Mv_S\bigt{\Sing(X_T)},
  \end{tikzcd}
\end{equation}
where the rows are fiber-cofiber sequences. We know that 
  $$
  \Mv_S(i_{X_T*}):\Mv_S\bigt{\Coh(X_t)}\to \Mv_S\bigt{\Coh(X_T)_{X_t}}
  $$
is an equivalence. 
For $Y$ a smooth $S$-scheme, the map of spectra 
    $$
      \Mv_S\bigt{\Coh(X_t)}(Y)\xto{\Mv_S(i_{X_T*})(Y)} \Mv_S\bigt{\Coh(X_T)_{X_t}}(Y)
    $$
identifies with a map
    $$
      \HK \bigt{\Coh(X_t)\otimes_{\Perf(S)}\Perf(Y)}\to \HK \bigt{\Coh(X_T)_{X_t}\otimes_{\Perf(S)} \Perf(Y)}.
    $$
Since $Y$ is a smooth $S$-scheme, $\Perf(Y)\simeq \Coh(Y)$ and it follows from \cite[Proposition B.4.1]{pr11} that
    $$
      \Coh(X_t)\otimes_{\Perf(S)} \Perf(Y)\simeq \Coh(X_t\times_SY)\simeq \Coh(X_t\times_s Y_s).
    $$
Similarly, using the fact that $-\otimes_{\Perf(S)}\Perf(Y)$ preserves localization sequences (see \cite[Proposition 3.19 2)]{ro15}), \cite[Proposition B.4.1]{pr11} implies that $\Coh(X_T)_{X_t}\otimes_{\Perf(S)} \Perf(Y)$ identifies with the kernel of the localization dg-functor
    $$
      \Coh(X_T\times_S Y)\to \Coh(X_{\uL}\times_S Y)\simeq \Coh(X_{\uL}\times_{\eta}Y_{\eta}),
    $$
that is, with $\Coh(X_T\times_S Y)_{X_t\times_sY_s}$.
It follows that the map $\Mv_S(i_{X_T*})(Y)$ identifies with 
    $$
      \underbrace{\HK\bigt{\Coh(X_t\times_s Y_s)}}_{=\uG(X_t\times_s Y_s)}\xto{\HK\bigt{(X_t\times_s Y_s\to X_T\times_S Y)_*}}\underbrace{\HK\bigt{\Coh(X_T\times_S Y)_{X_t\times_sY_s}}}_{=\uG(X_T\times_S Y)_{X_t\times_s Y_s}},
    $$
which is an equivalence by the theorem of the heart (see \cite{bar15,nee01,nee98,nee99}) and by d\'evissage in $\uG$-theory (see \cite[\S5, Theorem 4]{qu73}).
    
Since the middle and rightmost vertical arrows in diagram \eqref{diag: proof computation K theory relative perfect complexes} are equivalences,
  $$
\Mv_S(i_{X_T*}):\Mv_S\bigt{\Coh(X_t\xto{i_{X_T}}X_T)}\to \Mv_S\bigt{\Perf(X_T)_{X_t}}
  $$ 
is an equivalence as well. 
To show that these motivic spectra identify with $q_{T*}i_{X_T*}i_{X_T}^!\BU_{X_T}$, we consider the localization sequence
  $$
    \Perf(X_T)_{X_t}\hto \Perf(X_T) \to \Perf(X_{\uL}).
  $$
Combining this with the equivalence
  $$
    \Mv_S\bigt{\Perf(X_T) \to \Perf(X_{\uL})}\simeq q_{T*}\bigt{\BU_{X_T}\to j_{X_T*}\BU_{X_{\uL}}}
  $$
(see \cite{brtv18}), we deduce that 
  $$
    \Mv_S\bigt{\Perf(X_T)_{X_t}\hto \Perf(X_T) \to \Perf(X_{\uL})}
  $$
is a fiber-cofiber sequence that identifies with the localization sequence
  $$
    q_{T*}i_{X_T*}i_{X_T}^!\BU_{X_T}\to q_{T*}\BU_{X_T}\to q_{T*}j_{X_T*}\BU_{X_{\uL}}
  $$
associated to the open-closed decomposition $i_{X_T}:X_t\to X_T \lto X_{\uL}:j_{X_T}$.
\end{proof}

\begin{rem}
The above theorem agrees with the prediction, stated in \cite{brtv18,pi22}, that 
  $$
    \Mv_S\bigt{\Coh(Y_s\xto{i_Y} Y)}\simeq \Mv_S\bigt{\Perf(Y)_{Y_s}}
  $$
  for every qcqs flat $S$-scheme $Y$ of finite type.
\end{rem}

\section{The main theorems}

As already mentioned in Section \ref{introduction} (and as already pointed out in \cite[Remark 4.46]{brtv18}), once \thmref{devissage-like thm} is established, the proof of \cite[Theorem 4.39]{brtv18} works essentially unchanged. In this section, we spell out the minor changes needed for the proof of \thmref{main thm A}.

\ssec{\texorpdfstring{$\ell$}{l}-adic vanishing cycles and \texorpdfstring{$\IL$}{IL}-homotopy fixed points} 
We remind here the definition of the vanishing cohomology introduced in \cite{sga7ii,sga7i}.

\sssec{}

Let $\bar{j}_X: X_{\ol \uK}\to X$ be the pullback of $\bareta \to S$ along $p:X\to S$.
Denote by $\shv_{\Ql{}}(X_s)^{\IK}$ the $\oo$-category of $\ell$-adic sheaves on $X_s$ endowed with a continuous action of $\IK$.
The functor of \emph{nearby cycles} is defined by
$$
  \Psi_p: \shv_{\Ql{}}(X_{\uK})\to \shv_{\Ql{}}(X_s)^{\IK}
$$
$$
 E \mapsto i_X^*\bar{j}_{X*}(E_{|X_{\ol \uK}}),
$$
with the $\IK$-action induced by transport of structure from the natural $\IK$-action on $E_{|X_{\ol \uK}}$.

\begin{rem}
We do not spell out the details of this construction here.
These are provided for example in \cite{cp22} for finite coefficients. Then one can take a limit and invert $\ell$ to get $\Ql{}$-coefficients.
\end{rem}

\sssec{}

For an $\ell$-adic sheaf $E$ on $X$, there is a functorial morphism
$$
  \on{sp}_E: i_X^*(E)\to \Psi_p(E_{|X_{\uK}})
$$
called the \emph{specialization morphism}, induced by the counit of the adjunction $(\bar{j}_X^*,\bar{j}_{X*})$. 
This morphism is $\IK$-equivariant if we endow $i_X^*(E)$ with the trivial $\IK$-action.

\sssec{} 

The \emph{vanishing cycles} functor
$$
  \Phi_p: \shv_{\Ql{}}(X)\to \shv_{\Ql{}}(X_s)^{\IK}
$$
is defined as
$$
  \Phi_p(E):=\coFib(\on{sp}_E),
$$
where the cofiber is computed in $\shv_{\Ql{}}(X_s)^{\IK}$.

\sssec{}

Let us recall an explicit description of the homotopy $\IL$-fixed points of $\Phi_p(\Ql{,X})$. 
Let $v_X:X_T\to X$ be the pullback of $T\to S$ along $p:X\to S$.

\begin{lem}\label{I_L-invariant vanishing cycles}
There is a canonical equivalence
$$
  \Phi_p(\Ql{,X})^{\IL}\simeq \coFib \bigt{\Ql{,X_s}\oplus \Ql{,X_s}(-1)[-1]\to i_X^*v_{X*}j_{X_T*}\Ql{,X_{\uL}}},
$$
compatible with the natural actions of $\HL$ on both sides.
\end{lem}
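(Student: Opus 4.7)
The plan is to compute the $\uI_\uL$-homotopy fixed points of the defining triangle
$$
  i_X^*\Ql{,X} \xrightarrow{\us\up} \Psi_p(\Ql{,X_\uK}) \longrightarrow \Phi_p(\Ql{,X})
$$
term by term, and then use that the formation of homotopy fixed points preserves cofiber sequences to conclude. Since $\uI_\uL \subset \uI_\uK$ is a closed normal subgroup of finite index, the residual action of $\uH_\uL = \uI_\uK/\uI_\uL$ on the fixed points is automatic from the functoriality of the construction, so I will focus on exhibiting the $\uI_\uL$-fixed points themselves.

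First I would handle the left-hand term. The inertia $\uI_\uL$ acts trivially on $i_X^*\Ql{,X} \simeq \Ql{,X_s}$, so the homotopy fixed points compute continuous group cohomology valued in the constant sheaf. The absolute Galois group of a strictly henselian DVR with residue characteristic different from $\ell$ has $\ell$-cohomological dimension one, with $H^0(\uI_\uL,\Ql{}) \simeq \Ql{}$ and $H^1(\uI_\uL,\Ql{}) \simeq \Ql{}(-1)$ (the tame character, using our fixed system of roots of unity). Hence I obtain
$$
  (i_X^*\Ql{,X})^{\uI_\uL} \simeq \Ql{,X_s} \oplus \Ql{,X_s}(-1)[-1].
$$

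Next I would handle the middle term $\Psi_p(\Ql{,X_\uK}) = i_X^*\bar{j}_{X*}\Ql{,X_{\uKs}}$. The key input is Galois descent along the pro-\'etale cover $X_{\uKs} \to X_\uL$, which is Galois with group $\uI_\uL$. Factor
$$
  \bar j_X \colon X_{\uKs} \longrightarrow X_\uL \xrightarrow{\,j_{X_T}\,} X_T \xrightarrow{\,v_X\,} X,
$$
so that the pushforward of $\Ql{,X_{\uKs}}$ along the first map, taken as $\uI_\uL$-homotopy fixed points, identifies with $\Ql{,X_\uL}$ (this is the standard identification for a Galois cover of schemes with $\ell$-adic coefficients, applied pro-finitely). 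Pushing further along $v_X \circ j_{X_T}$ and applying $i_X^*$ (both of which commute with the limits defining homotopy fixed points) gives
$$
  \Psi_p(\Ql{,X_\uK})^{\uI_\uL} \simeq i_X^* v_{X*} j_{X_T*} \Ql{,X_\uL}.
$$

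Finally, I would apply $(-)^{\uI_\uL}$, which preserves cofiber sequences as a finite limit, to the defining triangle for $\Phi_p(\Ql{,X})$. Using the two identifications above and the fact that the specialization map is natural in its input (hence $\uI_\uL$-equivariant), I obtain the desired cofiber sequence exhibiting $\Phi_p(\Ql{,X})^{\uI_\uL}$ as the cone of the induced map $\Ql{,X_s}\oplus \Ql{,X_s}(-1)[-1] \to i_X^*v_{X*}j_{X_T*}\Ql{,X_\uL}$. The $\uH_\uL$-equivariance follows because every step is $\uI_\uK$-equivariant and taking fixed points for the normal subgroup $\uI_\uL$ leaves a residual $\uH_\uL$-action.

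The main subtlety I expect is the bookkeeping for continuous $\uI_\uK$-actions on $\ell$-adic sheaves: one must ensure that the group cohomology computation $R\Gamma(\uI_\uL,\Ql{}) \simeq \Ql{} \oplus \Ql{}(-1)[-1]$ is carried out in the correct $\infty$-categorical framework of continuous representations (e.g.\ working with finite $\ell^n$-coefficients, taking fixed points along the open normal subgroups of the profinite group, and then taking the limit over $n$ before inverting $\ell$), and that the Galois descent identification is compatible with this. Once these formalities are in place, the argument is essentially formal.
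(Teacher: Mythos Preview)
Your proposal is correct and follows essentially the same route as the paper: both apply $(-)^{\uI_\uL}$ to the defining triangle, identify the left term via the group cohomology of $\uI_\uL$ with trivial coefficients (the paper cites \cite{brtv18} for this, you compute it directly), and identify the middle term via Galois descent along $X_{\uKs}\to X_\uL$. The paper spends a bit more effort than you do checking that the resulting map is the one induced by the unit of $(j_{X_T}^*,j_{X_T*})$ and tracking the $\uH_\uL$-equivariance of each identification separately, but your functoriality argument covers the same ground.
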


\begin{proof}
As taking $\IL$-fixed points is an exact functor, we have an equivalence
$$
  \Phi_p(\Ql{,X})^{\IL}\simeq \coFib \bigt{\Ql{,X_s}^{\IL}\xto{\on{sp}_{\Ql{,X}}^{\IL}}\Psi_p(\Ql{,X_{\uK}})^{\IL}}.
$$
We will start by proving that there are equivalences
$$
  \Ql{,X_s}\oplus \Ql{,X_s}(-1)[-1]\simeq \Ql{,X_s}^{\IL},
$$
$$
  \Psi_p(\Ql{,X_{\uK}})^{\IL}\simeq i_X^*v_{X*}j_{X_T*}\Ql{,X_{\uL}},
$$
compatible with the $\HL$-actions.
In the first equivalence, both members are equipped with the trivial $\HL$-action; in the second one, the first member carries the canonical $\HL$-action and the second one the action induced by transport of structure.

The first equivalence follows from the computations carried out in \cite{brtv18} (applied to $p_T:X_T\to T$). 
This is tautologically compatible with the $\HL$-actions, as these actions are trivial on both sides.

The second equivalence is a form of Galois descent as in \cite[Proposition 4.31]{brtv18}.
Indeed, the morphism
$u_{\uL}:X_{\bar \uK}\to X_{\uL}$ induces an equivalence
$$
  u_{\uL}^*:\shv_{\Ql{}}(X_{\uL})^{\HL}\leftrightarrows \shv_{\Ql{}}(X_{\bar \uK})^{\IK}: u_{\uL*}(-)^{\IL}
$$
between the $\oo$-category of $\Ql{}$-adic sheaves on $X_{\uL}$ endowed with a $\HL$-action and the $\oo$-category of $\Ql{}$-adic sheaves on $X_{\bar \uK}$ endowed with a continuous $\IK$-action.
In particular, $\Ql{,X_{\uL}}\simeq u_{\uL*}(\Ql{,X_{\bar \uK}})^{\IL}$.
It follows that
$$
i_X^*v_{X*}j_{X_T*}\Ql{,X_{\uL}}\simeq i_X^*v_{X*}j_{X_T*}u_{\uL*}(\Ql{,X_{\bar \uK}})^{\IL}\simeq i_X^*(v_{X*}j_{X_T*}u_{\uL*}\Ql{,X_{\bar \uK}})^{\IL},
$$
where the latter equivalence holds since the functor $(-)^{\IL}$ commutes with pushforwards. 
Using the continuity of the $\IL$-action as in \cite[Proposition 4.31]{brtv18}, we deduce that
$$
(i_X^*v_{X*}j_{X_T*}u_{\uL*}\Ql{,X_{\bar \uK}})^{\IL}=\Psi_p(\Ql{,X_{\uL}})^{\IL}.
$$
It remains to construct an homotopy between $ \on{sp}_{\Ql{,X}}^{\IL}$ and
$$
\Ql{,X_s}\oplus \Ql{,X_s}(-1)[-1]\to i_X^*v_{X*}j_{X_T*}\Ql{,X_{\uL}}.
$$
For this, it suffices to observe that both morphisms pre-composed with $\Ql{,X_s}\to \Ql{,X_s}\oplus \Ql{,X_s}(-1)[-1]$ are homotopic to the morphism $\Ql{,X_s}\to i_X^*v_{X*}j_{X_T*}\Ql{,X_{\uL}}$ induced by the unit of the adjunction $((v_X\circ j_{X_T})^*,(v_X\circ j_{X_T})_*)$. 
Now, the fact that both $\on{sp}_{\Ql{,X}}^{\IL}$ and $\Ql{,X_s}\oplus \Ql{,X_s}(-1)[-1]\to i_X^*v_{X*}j_{X_T*}\Ql{,X_{\uL}}$ are $\Ql{,X_s}^{\IL}$-linear concludes the proof. 
Notice that the latter morphism is $\HL$-equivariant, as it factors through $i_X^*j_{X*}\Ql{X_{\uK}}\simeq \Psi_p(\Ql{,X_{\uK}})^{\IK}\simeq \bigt{\Psi_p(\Ql{,X_{\uK}})^{\IL}}^{\HL}$.
\end{proof}

\begin{rem}
    A similar result holds (with the same proof) if we replace $\Ql{,X}$ with $\Ql{,X}(\beta)$.
\end{rem}

\ssec{The action of \texorpdfstring{$\HL$}{HL}}

\sssec{}

Recall that $\HL$ denotes the (finite) group $\IK/\IL$. 
Explicitly, $\scrO_{\uL}$ is isomorphic to the quotient of the polynomial ring $\scrO_{\uK}[x]$ by an Eisenstein polynomial $E(x)\in \scrO_{\uK}[x]$ of degree $\ue$. 
The group $\HL \simeq \Gal(\uL/\uK)$ permutes the roots of $E(x)$ and thus acts on $\scrO_{\uL}$.

We thus obtain actions of $\HL$ on the $S$-schemes $T$, $X_T$, $\eta_{\uL}$, $X_{\uL}$, $t$ and $X_t$. 
These actions are compatible in the natural way.

\sssec{}

We obtain actions (induced by pullbacks) of $\HL$ on $\Coh(X_T)$, $\Coh(X_t)$, $\Coh(X_t\xto{i_{X_T}}X_T)$, etc.
In turn, these immediately yield actions of $\HL$ on the motivic and $\ell$-adic realizations of such dg-categories.

\begin{lem}

There is an $\HL$-equivariant equivalence 
  $$
    \Mv_S\bigt{\Perf(X_{\uL})}\simeq q_{T*}j_{X_T*}\BU_{X_{\uL}}
  $$
in $\Mod_{\BU_S}(\SH_S)$. In particular, there is an $\HL$-equivariant equivalence
  $$
  \rl_S\bigt{\Perf(X_{\uL})}\simeq q_{T*}j_{X_T*}\Ql{,X_{\uL}}(\beta)
  $$
in $\Mod_{\Ql{,S}(\beta)}\bigt{\shv_{\Ql{}}(S)}$.

\end{lem}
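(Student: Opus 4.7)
The plan is to deduce both equivalences directly from the general property of $\Mv_S$ recalled in \secref{ssec: motivic and l-adic realizations of dg categories}: for every qcqs $S$-scheme $q:Y\to S$ of finite type one has a natural equivalence $\Mv_S(\Perf(Y))\simeq q_*\BU_Y$ as $\BU_S$-modules. I would apply this with $Y=X_\uL$, whose structure morphism to $S$ factorises as $X_\uL\xrightarrow{j_{X_T}} X_T\xrightarrow{q_T} S$, and read off immediately the motivic equivalence
\[
  \Mv_S(\Perf(X_\uL))\simeq q_{T*}j_{X_T*}\BU_{X_\uL}.
\]
The scheme $X_\uL$ is indeed qcqs and of finite type over $S$, being the pullback of $X$ along the composition $\eta_\uL\to T\to S$, so the hypothesis of the cited property is satisfied.

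For the $\uH_\uL$-equivariance I would appeal to naturality. The actions of $\uH_\uL$ on $T$, $X_T$, $\eta_\uL$, $X_\uL$ all cover the trivial action on $S$ (they arise from the Galois action of $\uH_\uL\simeq \Gal(\uL/\uK)$ on $\scrO_\uL$ fixing $\scrO_\uK$), and both the assignment $Y\mapsto \Mv_S(\Perf(Y))$ and the comparison with $q_*\BU_Y$ are functorial in $S$-schemes. Each $\sigma\in \uH_\uL$ therefore produces a commuting square identifying pullback along the automorphism $\sigma:X_\uL\to X_\uL$ on the left with transport of structure along the induced automorphisms of $X_T$ and $X_\uL$ on the right, upgrading the equivalence to an $\uH_\uL$-equivariant one in the natural sense.

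To pass to the $\ell$-adic statement I would apply $\CR^\ell_S$, using the definition $\rl_S=\CR^\ell_S\circ \Mv_S$ and the identification $\CR^\ell_S(\BU_S)\simeq \Ql{,S}(\beta)$. The only thing to verify is that $\CR^\ell_S$ commutes with the two pushforwards appearing on the right: $q_{T*}$ along the proper morphism $q_T$ (pullback of the proper $p$), and $j_{X_T*}$ along the open immersion of the generic fibre. Both are instances of the partial compatibility of $\CR^\ell_S$ with Grothendieck's six operations recalled in \secref{ssec: motivic and l-adic realizations of dg categories}, and both preserve the $\uH_\uL$-equivariance by the same naturality argument.

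The main (and essentially only) obstacle is the commutation of $\CR^\ell_S$ with the pushforward $j_{X_T*}$ along the non-proper open immersion $j_{X_T}$; non-proper pushforwards are not in general preserved by $\ell$-adic realisation, but this particular case is covered by the compatibilities established in \cite{brtv18}. Everything else in the argument is formal and follows from the functoriality of $\Perf(-)$ and of the constructions defining $\Mv_S$ and $\rl_S$.
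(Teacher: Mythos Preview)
Your proposal is correct and follows essentially the same approach as the paper: invoke the general identification $\Mv_S(\Perf(Y))\simeq q_*\BU_Y$ for $Y=X_\uL$, and deduce $\uH_\uL$-equivariance from functoriality. One small simplification: the paper sidesteps your discussion of whether $\CR^\ell_S$ commutes with $j_{X_T*}$ by appealing directly to the fact (recalled in \secref{ssec: motivic and l-adic realizations of dg categories}) that $\rl_S$ itself satisfies the analogous property $\rl_S(\Perf(Y))\simeq q_*\Ql{,Y}(\beta)$; also, a minor slip: $q_T$ is not the pullback of $p$ but rather the composite of $p_T$ (which is) with the finite map $T\to S$, though properness still holds.
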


\begin{proof}

The first equivalence is one of the main features of the motivic realization of dg-categories. 
By functoriality, it is obviously compatible with the actions of $\HL$: these are both induced by the $\HL$-action on $X_{\uL}$. 
The second equivalence follows immediately from the first one.
\end{proof}

\begin{lem}\label{l-adic realization Perf(X_t)}

There is an equivalence 
  $$
    \rl_S\bigt{\Perf(X_t)}\simeq p_{*}i_{X*}\Ql{,X_s}(\beta)
  $$
in $\Mod_{\Ql{,S}(\beta)}\bigt{\shv_{\Ql{}}(S)}$. 
The group $\HL$ acts trivially on both sides.

\end{lem}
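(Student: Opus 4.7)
The plan is to reduce to the general formula for the $\ell$-adic realization of $\Perf$ of a qcqs $S$-scheme of finite type, and then to invoke topological invariance of the \'etale site to replace $X_t$ by its reduction $X_s$ at the level of $\ell$-adic sheaves.

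First, I would apply the property recorded in \secref{ssec: motivic and l-adic realizations of dg categories}: for every qcqs $S$-scheme $q_Y : Y \to S$ of finite type, $\Mv_S(\Perf(Y)) \simeq q_{Y*} \BU_Y$. Post-composing with $\CR^{\ell}_S$, which sends $\BU_S$ to $\Ql{,S}(\beta)$ and commutes with the relevant pushforwards, yields $\rl_S(\Perf(Y)) \simeq q_{Y*} \Ql{,Y}(\beta)$. Specializing to $Y = X_t$ with structure morphism $q_{X_t} : X_t \to S$, this produces
$$
  \rl_S\bigt{\Perf(X_t)} \simeq q_{X_t*} \Ql{,X_t}(\beta).
$$

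Next, I would exploit that $t = \Spec(\scrO_\uL/(\pi_\uK)) = \Spec(\scrO_\uL/(\pi_\uL^{\ue}))$ is a nil-thickening of $s = \Spec(\uk)$, since $\pi_\uK = u \cdot \pi_\uL^{\ue}$ with $u \in \scrO_\uL^\times$. Pulling back along $p$, the closed immersion $\iota : X_s \hookrightarrow X_t$ is likewise a nil-thickening, hence a universal homeomorphism. By topological invariance of the \'etale site, $\iota_*$ induces an equivalence on $\ell$-adic sheaves, and in particular $\iota_* \Ql{,X_s}(\beta) \simeq \Ql{,X_t}(\beta)$. Since $q_{X_t} \circ \iota = p \circ i_X$, pushing forward gives the desired
$$
  \rl_S\bigt{\Perf(X_t)} \simeq q_{X_t*} \iota_* \Ql{,X_s}(\beta) \simeq p_* i_{X*} \Ql{,X_s}(\beta).
$$

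For the triviality of the $\uH_\uL$-action, the right-hand side manifestly involves neither $T$ nor $\uL$. On the left, $\uH_\uL$ acts on $X_t$ via its action on $\scrO_\uL/(\pi_\uK)$; since the extension $\uL/\uK$ is totally ramified, $\uH_\uL$ fixes the residue field $\uk$, so the reduced subscheme $X_s \subset X_t$ carries the trivial $\uH_\uL$-action. Topological invariance then forces the induced action on $\Ql{,X_t}(\beta)$ to be trivial, and the identifications above intertwine trivial actions on both sides by functoriality. I do not anticipate any serious obstacle; the only mild point to check is the naturality of the equivalence $\Mv_S(\Perf(Y)) \simeq q_{Y*} \BU_Y$ under automorphisms of $Y$, which is immediate from the construction of $\Mv_S$.
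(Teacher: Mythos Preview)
Your proposal is correct and follows essentially the same route as the paper: both apply the general formula $\rl_S(\Perf(Y))\simeq q_{Y*}\Ql{,Y}(\beta)$, identify $\Ql{,X_t}(\beta)$ with the pushforward of $\Ql{,X_s}(\beta)$ along the nil-thickening $X_s\hookrightarrow X_t$ (the paper phrases this via the localization sequence for a closed immersion with empty open complement, you via topological invariance of the \'etale site), and then use the factorization $q_{X_t}\circ\iota = p\circ i_X$. For the $\uH_\uL$-triviality, the paper argues that each $h:X_t\to X_t$ satisfies $h\circ r=r$ for $r:X_s\hookrightarrow X_t$, which is exactly your observation that $\uH_\uL$ acts trivially on the residue field and hence on $X_s$.
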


\begin{proof}
Notice that $r:X_s\to X_t$ is a closed embedding (induced by $s=(t)_{\on{red}}\to t$) with empty open complement. 
The localization sequence in $\ell$-adic cohomology implies that $r_*\Ql{,X_s}(\beta)\simeq \Ql{,X_t}(\beta)$. 
Moreover, we have that
  $$
    \rl_S\bigt{\Perf(X_t)}\simeq q_{T*}i_{X_T*}\Ql{,X_t}(\beta), \hspace{0.5cm} \rl_S\bigt{\Perf(X_s)}\simeq p_{*}i_{X*}\Ql{,X_s}(\beta).
  $$
Then the desired equivalence follows from $q_T\circ i_{X_T}\circ r = p\circ i_X$. 

It remains to show that $\HL$ acts trivially on $\rl_S\bigt{\Perf(X_t)}$. 
This is clear: the action is induced by pullbacks along the isomorphisms $h:X_t\to X_t$, which verify the equations $r=h\circ r$.
\end{proof}

\ssec{The \texorpdfstring{$\ell$}{l}-adic realization of \texorpdfstring{$\Sing(X_t\xto{i_{X_T}}X_T)$}{Sing(Xt to XT)}}

We now approach the proof of our main theorem. 
\begin{prop}\label{l-adic realization of Sing(X_t-->X_T)}
The $\ell$-adic realization of $\Sing(X_t\xto{i_{X_T}}X_T)$ lives naturally in the following fiber-cofiber sequence:
  $$
    \rl_S\bigt{\Sing(X_t\xto{i_{X_T}}X_T)}\to p_*i_{X_*}\bigt{\Ql{,X_s}(\beta)\oplus \Ql{,X_s}(\beta)[1]} \to q_{T*}i_{X_T*}i_{X_T}^*j_{X_T*}\Ql{,X_{\uL}}(\beta).
  $$
Here, $\HL$ acts trivially on the middle term and naturally on the right one.
\end{prop}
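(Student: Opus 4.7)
The plan is to apply $\rl_S$ to the localization sequence
\[
\Perf(X_t) \hookrightarrow \Coh(X_t \xto{i_{X_T}} X_T) \tto \Sing(X_t \xto{i_{X_T}} X_T)
\]
and combine the resulting fiber-cofiber sequence $A \xto{\alpha} B \xto{\beta} C$ with the open-closed decomposition for $\Ql{,X_T}(\beta)$ via an octahedral argument. Here $C = \rl_S(\Sing(X_t\xto{i_{X_T}}X_T))$. By \lemref{l-adic realization Perf(X_t)} together with \thmref{devissage-like thm} and the fact that $\rl_S$ is compatible with the six operations when applied to $\Perf(X_T)_{X_t} \hookrightarrow \Perf(X_T) \to \Perf(X_\uL)$, one obtains $A \simeq p_*i_{X*}\Ql{,X_s}(\beta)$ and $B \simeq q_{T*}i_{X_T*}i_{X_T}^!\Ql{,X_T}(\beta)$; under these identifications $\alpha = \rl_S(i_{X_T*}\colon \Perf(X_t) \to \Perf(X_T)_{X_t})$, since this is the dg functor exhibiting the devissage equivalence.

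Next apply $q_{T*}i_{X_T*}(-)$ to the open-closed triangle
\[
i_{X_T}^!\Ql{,X_T}(\beta) \longto i_{X_T}^*\Ql{,X_T}(\beta) \longto i_{X_T}^*j_{X_T*}\Ql{,X_\uL}(\beta)
\]
on $X_t$ to obtain a second fiber-cofiber sequence $B \xto{\delta} A \xto{\epsilon} D$, where $D := q_{T*}i_{X_T*}i_{X_T}^*j_{X_T*}\Ql{,X_\uL}(\beta)$ matches the right-hand term in the statement. A comparison using the same six-functors compatibility shows $\delta = \rl_S(i_{X_T}^*\colon \Perf(X_T)_{X_t} \to \Perf(X_t))$. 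The octahedral axiom, applied to the composition $A \xto{\alpha} B \xto{\delta} A$, then yields a distinguished triangle
\[
C \longto \cone(\phi) \longto D, \qquad \phi := \delta\circ\alpha.
\]

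The crux is to show that $\phi \simeq 0$, which forces $\cone(\phi) \simeq A \oplus A[1]$ and hence produces the claimed sequence. By functoriality, $\phi = \rl_S(i_{X_T}^*\circ i_{X_T*})$. Since $X_t \hookrightarrow X_T$ is globally cut out by the regular element $\pi_\uL \in \scrO_\uL$, the complex $[\CO_{X_T} \xto{\pi_\uL} \CO_{X_T}]$ is a perfect resolution of $\CO_{X_t}$; therefore, for every $E \in \Perf(X_t)$,
\[
i_{X_T}^*i_{X_T*}E \simeq E\otimes^L_{\CO_{X_T}}\CO_{X_t} \simeq E \oplus E[1]
\]
functorially (the $\pi_\uL$-multiplication vanishes on $X_t$), i.e.\ the endofunctor $i_{X_T}^*\circ i_{X_T*}$ is $\mathrm{id}\oplus[1]$. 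Waldhausen additivity together with $\rl_S([1]) = -\mathrm{id}$ (coming from $[E[1]] = -[E]$ in K-theory) gives $\phi = \rl_S(\mathrm{id}) + \rl_S([1]) = 0$. The main obstacle is precisely to pin down the identifications of $\alpha$ and $\delta$ at the dg level so that this explicit Koszul computation governs $\phi$; once these are in place the rest is formal. The $\uH_\uL$-equivariance is automatic since every dg category, functor, and sheaf-theoretic operation in use is $\uH_\uL$-functorial, $\uH_\uL$ acts trivially on $A$ by \lemref{l-adic realization Perf(X_t)}, and naturally on $D$ via transport along the Galois action on $X_\uL$.
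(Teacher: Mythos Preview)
Your argument is essentially the paper's: both set up the octahedral axiom on the composition $\rl_S(\Perf(X_t)) \to \rl_S(\Coh(X_t\to X_T)) \to \rl_S(\Perf(X_t))$ (your $\phi=\delta\circ\alpha$; in the paper this is $\xi$ precomposed with the inclusion from $\Perf(X_t)$), show it is null-homotopic, and read off the desired fiber sequence from $\cone(\phi)\simeq A\oplus A[1]$. One correction: $X_t\hookrightarrow X_T$ is cut out by $\pi_\uK$, not $\pi_\uL$ (recall $t=\Spec(\scrO_\uL/\pi_\uK)$); with that fix your Koszul-plus-additivity argument for $\phi\simeq 0$ is valid and is in effect what underlies \cite[Lemma~3.26]{brtv18}, which the paper invokes after first reducing to the base case $X/S$ via the pushforward comparison diagram---a reduction that also makes the $\uH_\uL$-equivariance of the map transparent, since it exhibits the factorization through the $\uH_\uL$-fixed object $p_*i_{X*}i_X^*j_{X*}\Ql{,X_\uK}(\beta)$.
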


\begin{proof}
By applying $\rl_S$ to the localization sequence
  $$
    \Perf(X_t)\to \Coh(X_t\xto{i_{X_T}}X_T)\to \Sing(X_t\xto{i_{X_T}}X_T),
  $$
together with \thmref{devissage-like thm} and \lemref{l-adic realization Perf(X_t)}, we get the fiber-cofiber sequence 
  $$
    p_*i_{X*}\Ql{,X_s}(\beta)\to q_{T*}i_{X_T*}i_{X_T}^!\Ql{,X_T}(\beta)\to \rl_S\bigt{\Sing(X_t\xto{i_{X_T}}X_T)}
  $$
in $\Mod_{\Ql{,S}(\beta)}\bigt{\shv_{\Ql{}}(S)}$.
In particular, we observe that 
  $$
    \rl_S\bigt{\Sing(X_t\xto{i_{X_T}}X_T)}\simeq i_{S*}i_{S}^*\rl_S\bigt{\Sing(X_t\xto{i_{X_T}}X_T)}
  $$
is supported on $s$. Consider now the diagram
\begin{equation}\label{crucial diagram}
  \begin{tikzcd}
    \rl_S\bigt{\Perf(X_t)}
    \arrow[r]
    &
    \rl_S\bigt{\Coh(X_t\xto{i_{X_t}}X_T)}
    \arrow[r]
    \arrow[d]
    \arrow[rd,"\xi"]
    &
    \rl_S\bigt{\Sing(X_t\xto{i_{X_t}}X_T)}
    \\
    p_*v_{X*}j_{X_T!}\Ql{,X_{\uL}}(\beta)
    \arrow[r]
    \arrow[rd,"\zeta"]
    &
    p_*v_{X*}\Ql{,X_T}(\beta)
    \arrow[r]
    \arrow[d]
    &
    p_*v_{X*}i_{X_T*}\Ql{,X_t}(\beta)
    \\
    &
    p_*v_{X*}j_{X_T*}\Ql{,X_{\uL}}(\beta)
    &
  \end{tikzcd}
\end{equation}
and observe that the two rows and the column in the middle are fiber-cofiber sequences. 
This has already been remarked for the first row. 
The second row is just localization in $\ell$-adic sheaves. 
As for the column in the middle, one observes that the map $p_*v_{X*}\bigt{\Ql{,X_T}(\beta)\to j_{X_T*}\Ql{,X_{\uL}}(\beta)}$ identifies with $\rl_S\bigt{\Perf(X_T)\xto{j_{X_T}^*}\Perf(X_{\uL})}$. 
The latter has fiber equal to $\rl_S\bigt{\Perf(X_T)_{X_t}}\simeq \rl_S\bigt{\Coh(X_t\xto{i_{X_T}}X_T)}$.

\medskip

Consider now the composition
  $$
    \rl_S\bigt{\Perf(X_t)}\to \rl_S\bigt{\Coh(X_t\xto{i_{X_T}}X_T)} \xto{\xi} p_*v_{X*}i_{X_T*}\Ql{,X_t}(\beta),
  $$
which we claim to be homotopic to zero. 
Indeed, the pushforwards along $X_t\to X_s$ and $X_T\to X$ induce a commutative diagram

\begin{equation}\label{diagram for equivariancy}
  \begin{tikzcd}
    \rl_S\bigt{\Perf(X_t)}
    \arrow[r]
    \arrow[d,"\simeq"]
    &
    \rl_S\bigt{\Coh(X_t\xto{i_{X_t}}X_T)}
    \arrow[r,"\rl_S(i_{X_T*})"]
    \arrow[d]
    &
    \rl_S\bigt{\Perf(X_T)}
    \arrow[r,"\rl_S(i_{X_T}^*)"]
    \arrow[d]
    &
    \underbrace{\rl_S\bigt{\Perf(X_t)}}_{\simeq p_*v_{X*}i_{X_T*}\Ql{,X_t}(\beta)}
    \arrow[d,"\simeq"]
    \\
    \rl_S\bigt{\Perf(X_s)}
    \arrow[r]
    &
    \rl_S\bigt{\Coh(X_s)}
    \arrow[r,"\rl_S(i_{X*})"]
    &
    \rl_S\bigt{\Perf(X)}
    \arrow[r,"\rl_S(i_{X}^*)"]
    &
    \underbrace{\rl_S\bigt{\Perf(X_s)}}_{\simeq p_*i_{X*}\Ql{,X_s}(\beta)}
  \end{tikzcd}
\end{equation}
where the vertical morphisms at the extremes are equivalences. By \cite[Lemma 3.26]{brtv18} the bottom composition is homotopic to zero and the claim follows.

Notice also that
  $$
    \coFib(\xi)\simeq \coFib(\zeta).
  $$
This is a general fact about diagrams like the one above in a stable $\oo$-category.

We now apply the octahedron construction to the composition
  $$
    \rl_S\bigt{\Perf(X_t)}\to \rl_S\bigt{\Coh(X_t\xto{i_{X_T}}X_T)} \xto{\xi} p_*v_{X*}i_{X_T*}\Ql{,X_t}(\beta)
  $$
and obtain the fiber-cofiber sequence
  $$
    \rl_S\bigt{\Sing(X_t\xto{i_{X_T}}X_T)}\to p_*v_{X*}i_{X_T*}\Ql{,X_t}(\beta)\oplus \rl_S\bigt{\Perf(X_t)}[1]\to \coFib(\zeta).
  $$
Observe now that all objects are supported on $s$ and that
  $$
    p_*v_{X*}i_{X_T*}\Ql{,X_t}(\beta)\oplus \rl_S\bigt{\Perf(X_t)}[1]\simeq p_*i_{X_*}\bigt{\Ql{,X_s}(\beta)\oplus \Ql{,X_s}(\beta)[1]}.
  $$
Moreover, by proper base change, we have
  $$
    i_{S*}i_S^*\bigt{\coFib(\zeta)}\simeq q_{T*}i_{X_T*}i_{X_T}^*j_{X_T*}\Ql{,X_{\uL}}(\beta).
  $$
We deduce that there is a fiber-cofiber sequence
  $$
\rl_S\bigt{\Sing(X_t\xto{i_{X_T}}X_T)}\to p_*i_{X_*}\bigt{\Ql{,X_s}(\beta)\oplus \Ql{,X_s}(\beta)[1]} \to q_{T*}i_{X_T*}i_{X_T}^*j_{X_T*}\Ql{,X_{\uL}}(\beta).
  $$
To conclude, observe that the term in the middle, which is equivalent to $\rl_S\bigt{\Perf(X_t)}\oplus \rl_S\bigt{\Perf(X_t)}[1]$, carries the trivial action of $\HL$ by \lemref{l-adic realization Perf(X_t)}. 
Therefore, it is equivalent to 
$$
p_*i_{X_*}\Ql{,X_s}(\beta)\otimes_{\Ql{,S}}\Ql{,S}^{\IL}
$$
by \lemref{I_L-invariant vanishing cycles}.
\end{proof}

\begin{notat}
We will denote the morphism appearing in \propref{l-adic realization of Sing(X_t-->X_T)} by
    $$
      \on{can}_{X_T}:p_*i_{X_*}\bigt{\Ql{,X_s}(\beta)\oplus \Ql{,X_s}(\beta)[1]} \to q_{T*}i_{X_T*}i_{X_T}^*j_{X_T*}\Ql{,X_{\uL}}(\beta).
    $$   
\end{notat}

\begin{rem}
The morphism $p_*i_{X_*}\Ql{,X_s}(\beta)\to q_{T*}i_{X_T*}i_{X_T}^*j_{X_T*}\Ql{,X_{\uL}}(\beta)$ obtained by restriction from the second map in the fiber-cofiber sequence of \propref{l-adic realization of Sing(X_t-->X_T)} corresponds to the one induced by the unit $\Ql{,X_T}(\beta)\to j_{X_T*}\Ql{,X_{\uL}}(\beta)$ under the equivalence
    $$
      p_*i_{X_*}\Ql{,X_s}(\beta)\simeq q_{T*}i_{X_T*}i_{X_T}^*\Ql{,X_T}(\beta).
    $$
In particular, as
    $$
      p_{s*}\bigt{\Ql{,X_s}(\beta)\oplus \Ql{,X_s}(\beta)[1]}\simeq p_{s*}\Ql{,X_s}(\beta)\otimes_{\Ql{,s}}\Ql{,s}^{\IL}
    $$
(see \cite[(4.3.43)]{brtv18}), we see that $\on{can}_{X_T}$ is obtained from the unit morphism $\Ql{,X_T}(\beta)\to j_{X_T*}\Ql{,X_{\uL}}(\beta)$ by recognizing that $q_{T*}i_{X_T*}i_{X_T}^*j_{X_T*}\Ql{,X_{\uL}}(\beta)$ has a natural $i_{S*}\Ql{,s}^{\IL}$-module structure.

In particular, we can write the fiber-cofiber sequence of \propref{l-adic realization of Sing(X_t-->X_T)} as
    $$
      \rl_S\bigt{\Sing(X_t\xto{i_{X_T}}X_T)}\to i_{S*}p_{s*}\Ql{,X_s}(\beta)\otimes_{\Ql{,s}}\Ql{,s}^{\IL} \xto{\on{can}_{X_T}} i_{S*}p_{s*}\Psi_p\bigt{\Ql{,X_{\uK}}(\beta)}^{\IL}.
    $$
\end{rem}

\begin{rem}
The diagram \eqref{diagram for equivariancy} also shows that the map 
    $$
      \on{can}_{X_T}:p_*i_{X_*}\bigt{\Ql{,X_s}(\beta)\oplus \Ql{,X_s}(\beta)[1]} \to q_{T*}i_{X_T*}i_{X_T}^*j_{X_T*}\Ql{,X_{\uL}}(\beta)
    $$
is $\HL$-equivariant. 
Indeed, combined with the version of \eqref{crucial diagram} for $X_T$ replaced by $X$, it implies that this map factors through 
    $$
      p_*i_X^*j_{X*}\Ql{,X_{\uK}}(\beta)\simeq \bigt{q_{T*}i_{X_T*}i_{X_T}^*j_{X_T*}\Ql{,X_{\uL}}(\beta)}^{\HL}.
    $$
\end{rem}

\sssec{Proof of \thmref{main thm A}}

We are finally ready to prove our first main theorem.
We will do so by showing that there is a homotopy of morphisms of algebras
$$
  \on{sp}^{\IL} \sim \on{can}_{X_T}:\underbrace{p_*i_{X_*}\bigt{\Ql{,X_s}(\beta)\oplus \Ql{,X_s}(\beta)[1]}}_{\simeq p_*i_{X_*}\Ql{,X_s}(\beta)\otimes_{\Ql{,S}}\Ql{,S}^{\IL}} \to q_{T*}i_{X_T*}i_{X_T}^*j_{X_T*}\Ql{,X_{\uL}}(\beta).
$$

Notice that $\on{sp}^{\IL}$ is homotopic to the morphism 
$$
  p_{*}i_{X*}\Ql{,X_s}(\beta)\otimes_{\Ql{,s}}\Ql{,s}^{\IL}\to q_{T*}i_{X_T*}i_{X_T}^*j_{X_T*}\Ql{,X_{\uL}}(\beta)
$$
which corresponds, under the adjunction $\bigt{-\otimes_{\Ql{,s}}\Ql{,s}^{\IL},\textup{Forget}}$, to the morphism
$$
p_{*}i_{X*}\Ql{,X_s}(\beta)\to q_{T*}i_{X_T*}i_{X_T}^*j_{X_T*}\Ql{,X_{\uL}}(\beta)
$$
induced by the adjunction $(j_{X_T}^*,j_{X_T*})$. This is \lemref{I_L-invariant vanishing cycles}.
The same is true for $\on{can}_{X_T}$, as one can see from diagram \ref{crucial diagram}.

\ssec{Non-commutative nature of \texorpdfstring{$\ell$}{l}-adic vanishing cycles}
In this subsection we prove \thmref{main thm B}.

\sssec{} 

Recall from \cite{brtv18} that the category of {\LG} models over $S$ is the ordinary category of pairs $(Y,f)$, where $Y$ is a flat $S$-scheme and $f:Y\to \bbA^{1}_{S}$ is a function. 
A morphism $(Y,f)\to (Z,g)$ between {\LG} models is a morphism of $S$-schemes $Y\to Z$ compatible with $f$ and $g$ in the obvious sense.

The assignment $(Y,f)\mapsto \Sing(Y_0\xto{i_Y}Y)$, where $i_Y:Y_0\to Y$ is the closed embedding of the fiber over zero of $f$ in $Y$, can be promoted to a functor
$$
  \Sing: \LG_S^\op \to \dgCat_S,
$$
where the transition maps are induced by pullbacks. See \cite[\S 2.3.15]{brtv18}.

\begin{notat}
    In this section, we will adopt the notation 
    $$
      \Sing(Y,f):=\Sing(Y_0\xto{i_Y}Y).
    $$
\end{notat}

\sssec{} 

Let $\scrE$ denote the filtered category of finite extensions
 $$
 \scrO_{\uK} \subseteq \scrO_{\uL}
 $$ 
of complete strict discrete valuations rings, like the one considered in \ref{notat: extensions dvr}.
For an $S$-scheme $Y$, we will denote by $(Y,\pi_{\uK})$ the {\LG} model over $S$ given by $Y$ with the function $Y\to S \xto{\pi_{\uK}} \bbA^{1}_{S}$.

\sssec{} 

Consider a proper flat regular $S$-scheme $X$ (generically smooth). 
For an extension $\scrO_{\uK} \subseteq \scrO_{\uL}$, let $X_{\scrO_{\uL}}$ denote the pullback $X\times_S \Spec(\scrO_{\uL})$. 
Then we get the following diagram of {\LG} models over $S$:
$$
  \scrE^\op \to \LG_S
$$
$$
  (\scrO_{\uK} \subseteq \scrO_{\uL})\mapsto (X_{\scrO_{\uL}},\pi_{\uK}).
$$
Notice that for every chain of extensions $\scrO_{\uK} \subseteq \scrO_{\uL} \subseteq \scrO_{\uM}$, the morphism of {\LG} models
$$
  (X_{\scrO_{\uM}},\pi_{\uK})\to (X_{\scrO_{\uL}},\pi_{\uK})
$$
is $\uH_{\uM}$-equivariant, where the $\uH_{\uM}$-action on $X_{\scrO_{\uL}}$ is induced by the quotient $\Gal(\uM/\uK)=\uH_{\uM} \to \HL=\Gal(\uL/\uK)$.

\sssec{} 

Composing this diagram with the functor $\Sing$, we get a diagram 
$$
  \ffd:\scrE \to \dgCat_S
$$
defined on objects by $(\scrO_{\uK} \subseteq \scrO_{\uL})\mapsto \Sing(X_{\scrO_{\uL}},\pi_{\uK})$.

\begin{rem}
The dg-category $\Sing(X_{\scrO_{\uL}},\pi_{\uK})$ is precisely the dg-category denoted by 
$$\Sing(X_t\xto{i_{X_T}}X_T)
$$ 
in the previous sections (for $T=\Spec(\scrO_{\uL})$).
\end{rem}

\sssec{}

It follows immediately from functoriality that each $\Sing(X_{\scrO_{\uL}},\pi_{\uK})$ carries a canonical $\HL$-action and that the dg functors
$$
  \Sing(X_{\scrO_{\uL}},\pi_{\uK})\to \Sing(X_{\scrO_{\uM}},\pi_{\uK})
$$
are compatible with these actions for every chain of extensions $\scrO_{\uK} \subseteq \scrO_{\uL} \subseteq \scrO_{\uM}$.

\sssec{} 

Recall that $\dgCat_S$ is a cocomplete $\oo$-category. 
We consider the colimit 
$$
  \ffS:=\varinjlim_{(\scrO_{\uK} \subseteq \scrO_{\uL})\in \scrE} \Sing(X_{\scrO_{\uL}},\pi_{\uK})
$$
of the diagram $\ffd$. 
It follows immediately that this dg-category carries a \emph{continuous} action of $\IK\simeq \varprojlim_{(\scrO_{\uK} \subseteq \scrO_{\uL})\in \scrE}\HL$. 
Roughly, this means that, for every object $A\in \ffS$, there exists some $(\scrO_{\uK} \subseteq \scrO_{\uL})\in \scrE$ such that $\IL\subseteq \IK$ acts trivially on the full subcategory $(A)\subseteq \ffS$ generated by $A$.

\sssec{Proof of \thmref{main thm B}} 

Notice that for every chain $\scrO_{\uK} \subseteq \scrO_{\uL} \subseteq \scrO_{\uM}$, there is a commutative diagram

\begin{equation*}
    \begin{tikzcd}[column sep=huge, row sep=large]
      \rl_S\bigt{\Sing(X_{\scrO_{\uL},\pi_{\uK}})}
      \arrow[r]
      \arrow[d,"\bigt{(X_{\scrO_{\uM}}\to X_{\scrO_{\uL}})\times_S s}^*"]
      &
      i_{S*}p_{s*}\Ql{,X_s}(\beta)^{\IL}
      \arrow[r,"\on{sp}^{\IL}"]
      \arrow[d]
      &
      i_{S*}\Psi_p\bigt{\Ql{,X_{\uK}}(\beta)}^{\IL}
      \arrow[d]
      \\
      \rl_S\bigt{\Sing(X_{\scrO_{\uM},\pi_{\uK}})}
      \arrow[r]
      &
      i_{S*}p_{s*}\Ql{,X_s}(\beta)^{\uI_{\uM}}
      \arrow[r,"\on{sp}^{\uI_{\uM}}"]
      &
      i_{S*}\Psi_p\bigt{\Ql{,X_{\uK}}(\beta)}^{\uI_{\uM}}
    \end{tikzcd}
\end{equation*}
where the rows are fiber-cofiber sequences of \thmref{main thm A} and the middle and rightmost vertical a morphisms are the canonical maps from $\IL$-homotopy fixed points to $\uI_{\uM}$-homotopy fixed points. Therefore, the filtered diagram
$$
  \rl_S \circ \ffd : \scrE \to \Mod_{\Ql{,S}(\beta)}\bigt{\shv_{\Ql{}}(S)}
$$
is equivalent to the filtered diagram
$$
  \scrE \to \Mod_{\Ql{,S}(\beta)}\bigt{\shv_{\Ql{}}(S)}
$$
$$
  (\scrO_{\uK} \subseteq \scrO_{\uL}) \mapsto i_{S*}\uH^*_{\et} \bigt{X_s,\Phi_p(\Ql{,S}(\beta))}^{\IL}[-1].
$$

\sssec{}
Recall that $\rl_S$ commutes with filtered colimits. 
Since the equivalences 
$$
  \rl_S\bigt{\Sing(X_{\scrO_{\uL}},\pi_{\uK})}\simeq i_{S*}\uH^*_{\et} \bigt{X_s,\Phi_p(\Ql{,S}(\beta))}^{\IL}[-1]
$$ 
are compatible with the $\HL$-actions, we get that
\begin{eqnarray}
\nonumber
   \rl_S(\ffS) & = & \rl_S \bigt{\varinjlim_{(\scrO_{\uK} \subseteq \scrO_{\uL})\in \scrE} \Sing(X_{\scrO_{\uL}},\pi_{\uK})} \hspace{0.5cm} (\text{def. of } \ffS)
   \\
   \nonumber
   & \simeq & \varinjlim_{(\scrO_{\uK} \subseteq \scrO_{\uL})\in \scrE}\rl_S \bigt{\Sing(X_{\scrO_{\uL}},\pi_{\uK})} \hspace{0.5cm} (\rl_S \text{ commutes with filtered colimits})
   \\
   \nonumber
   & \simeq & \varinjlim_{(\scrO_{\uK} \subseteq \scrO_{\uL})\in \scrE}i_{S*}\uH^*_{\et} \bigt{X_s,\Phi_p(\Ql{,S}(\beta))}^{\IL}[-1] \hspace{0.5cm} (\text{\thmref{main thm A}})
   \\
   \nonumber
   & \simeq & i_{S*} \uH^*_{\et}\bigt{X_s,\Phi_p(\Ql{,X}(\beta))}[-1] \hspace{0.5cm} (\text{continuity of the action of } \IK).
\end{eqnarray}
This concludes the proof of Theorem \ref{main thm B}.

\appendix
\section{Remarks on the properness hypothesis}\label{sec:prop hyp}

In this final section, we briefly comment on the properness hypothesis for the morphism $p:X\to S$. This assumption is superfluous, provided that one is willing to work at the level of $\ell$-adic sheaves on $X$. 
This observation is the analog of \cite[Footnote $8$, page 503]{tv22} in the case where Galois actions are taken into account.

\ssec{}

An attentive reader might have noticed that the properness hypothesis is used only once throughout the paper: in the proof of \propref{l-adic realization of Sing(X_t-->X_T)} in order to invoke proper base-change. 
This is needed because we work with the $\ell$-adic realization functor $\rl_S$.
However, as explained in \cite[Remark 2.2.2]{tv22}, $\rl_S$ admits a relative version
$$
  \rl_X:
  \dgCat_X
  \to
  \Mod_{\Ql{,X}(\beta)}\bigt{\Shv(X)}
$$
with the same properties of $\rl_S$.
The computations and the proofs in this paper all work \emph{mutatis mutandis} by applying $\rl_X$ in place of $\rl_S$.
Only the fact that 
$$
  \Coh(X_t\xto{i_{X_T}}X_T)
  \hto
  \Coh(X_t)
  \to
  \Sing(X_T)
$$
is a localization sequence of $X$-linear dg-categories deserves a bit of explanation.

\ssec{}

The functor
$$
  p_*:
  \dgCat_X
  \to
  \dgCat_S
$$
admits a (symmetric monoidal) left adjoint 
$$
  p^*=-\otimes_{\Perf(S)}\Perf(X):
  \dgCat_S
  \to
  \dgCat_X
$$
which preserves localization sequences.

\ssec{}

By \corref{useful loc seq}, we obtain that
$$
  \bigt{\Coh(G_t\xto{a_1*}t)
  \hto
  \Coh(G_t)
  \to
  \Sing(t)}
  \otimes_{\Perf(S)}\Perf(X)
$$
is a localization sequence of (left) $\sB_X^+$-modules, where $\sB_X^+:=\sB^+\otimes_{\Perf(S)}\Perf(X)$.

\ssec{}

Clearly, $\Coh(X_s)$ (regarded as an $X$-linear dg-category) admits a left $\sB_X^+$-module structure. 
As a consequence, $\Coh(X_s)^{\op}$ admits a right $\sB_X^+$-module structure.

\ssec{}
One sees that 
$$
  \Coh(X_s)^{\op}
  \otimes_{\sB_X^+}
  \Bigt{
  \bigt{\Coh(G_t\xto{a_1*}t)
  \hto
  \Coh(G_t)
  \to
  \Sing(t)}
  \otimes_{\Perf(S)}\Perf(X)
  }
$$
identifies with the diagram of $X$-linear dg-categories
\begin{equation}\label{useful loc seq 2/X}
  \Coh(X_t\xto{i_{X_T*}}X_T)
  \hto
  \Coh(X_t)
  \to
  \Sing(X_T).
\end{equation}
In particular, this is a localization sequence in $\dgCat_X$ (the proofs of \lemref{Coh(X_s)^op otimes_CB^+ - preserves loc seq} and \propref{useful loc seq 2} can be adapted easily to the $X$-linear situation).

\begin{rmk}
If we apply the forgetful functor $p_*:\dgCat_X \to \dgCat_S$ to \eqref{useful loc seq 2/X}, we find the localization sequence of $S$-linear dg-categories obtained in \propref{useful loc seq 2}.
\end{rmk}

\ssec{}

Given this key ingredient, the proofs of the computations of motivic and $\ell$-adic realizations in the main body of the paper apply \emph{before taking $p_*$}. Hence, we can avoid any reference to proper base-change and in particular we obtain the following:
\begin{unthm}
    Let $p:X\to S$ be a flat and generically smooth morphism of finite type. Assume that $X$ is regular.
    There is an equivalence of $i_{X*}\Ql{,X}^\IL(\beta)$-modules 
    $$
      \rl_X\bigt{\Sing(X_t\xto{i_{X_T}}X_T)}
      \simeq
      i_{X*}\Phi_p\bigt{\Ql{,X}(\beta)}^{\IL}[-1],
    $$
    compatible with the natural $\GLK$-actions.
\end{unthm}

\printbibliography

\end{document}